\begin{document}

\title[uniform propagation for CBO]{Uniform-in-time weak propagation of chaos for consensus-based optimization}

\author[Bayraktar]{Erhan Bayraktar}
\address{%
	Department of Mathematics,
	University of Michigan,
	Ann Arbor, MI 48109.}
\email{erhan@umich.edu  }

\author[Ekren]{Ibrahim Ekren}
\address{%
	Department of Mathematics,
	University of Michigan,
	Ann Arbor, MI 48109.}
\email{iekren@umich.edu  }

\author[Zhou]{Hongyi Zhou}
\address{%
	Department of Mathematics,
	University of Michigan,
	Ann Arbor, MI 48109.}
\email{hongyizh@umich.edu	}

\subjclass[2020]{Primary
	35Q89, 
	37N40, 
	93D50; 
	Secondary
	82C31, 
	90C26 
}
\keywords{Consensus-based optimization, Uniform-in-time propagation of chaos, Weak convergence, Sobolev spaces, Linearized Fokker-Planck equations}
\date{February 1, 2025}

\begin{abstract}
	We study the uniform-in-time weak propagation of chaos for the consensus-based optimization (CBO) method on a bounded searching domain.
	We apply the methodology for studying long-time behaviors of interacting particle systems developed in the work of Delarue and Tse~\cite{DelarueTse2021}. 
	Our work shows that the weak error has order $O(N^{-1})$ uniformly in time, where $N$ denotes the number of particles. 
	The main strategy behind the proofs are the decomposition of the weak errors using the linearized Fokker-Planck equations and the exponential decay of their Sobolev norms.
	Consequently, our result leads to the joint convergence of the empirical distribution of the CBO particle system to the Dirac-delta distribution at the global minimizer in population size and running time in Wasserstein-type metrics.
\end{abstract}

\maketitle

\tableofcontents

\section{Introduction}

The \emph{consensus-based optimization} (CBO) is a gradient-free optimization algorithm inspired by Laplace's principle
\begin{equation*}
	\lim_{\alpha \to +\infty} \left( -\frac{1}{\alpha} \log \int_{\R^d} e^{-\alpha \efn(x)} \mu(dx) \right) = \inf_{x \in \supp(\mu)} \efn(x) \,,
\end{equation*}
which holds for any absolutely continuous probability measure $\mu$ on $\R^d$, where $\efn: \R^d \to \R$ is the objective function to be minimized.
Restricting the searching scope to a bounded domain, the CBO method runs a dynamic system of $N$ interacting particles, denoted by $(X^{N,i})_{i=1,\dots,N}$, described by the system of stochastic differential equations
\begin{subequations}
\label{e:CBO-particles}
    \begin{equation}
        d X^{N,i}_t = - \lambda (X^{N,i}_t - m^N_t) dt + \sigma \abs{X^{N,i}_t - m^N_t} \phi(X^{N,i}_t) dW^i_t
    \end{equation}
	for $i=1,\dots,N$, where $W^i$ are i.i.d. standard $d$-dimensional Brownian motions, $\phi$ is a compactly supported smooth function, and 
    \begin{equation}
        m^N_t \defeq \frac{\sum_{j=1}^N X^{N,j}_t \exp(-\alpha \cE(X^{N,j}_t))}{\sum_{j=1}^N \exp(-\alpha \cE(X^{N,j}_t))} \,, \qquad \nu^N_t \defeq \frac{1}{N} \sum_{j=1}^N \delta_{X^{N,j}_t} \,.
    \end{equation}
\end{subequations}
Previous works on the CBO methods have shown that the particles can run arbitrarily close to the unique global minimizer $x^\ast$ (if it exists), for large enough $t$, $N$, and $\alpha$, whereas the choices of those quantities may depend on each other.

In this research, we study the proximity of the above system to its mean-field limit process over the infinite time horizon.
We will demonstrate that, under appropriate choices of the constant parameters $\lambda, \sigma$, and $\alpha$, the empirical distribution $(\nu^N_t)_{t \ge 0}$ of the system~\eqref{e:CBO-particles} converges to a limit $(\bar \nu_t)_{t \ge 0}$ uniformly in time, in the sense that 
\begin{equation*}
	\sup_{t \ge 0} \abs{\Phi(\bar\nu_t) - \E \Phi(\nu^N_t)} \le O(N^{-1})
\end{equation*}
for a family of functions $\Phi: \prob(\R^d) \to \R$.
This admits independent choices of the number $N$ of particles and the running time $t$ when given an error tolerance in practice. 
In the end, our work leads to the convergence of $\nu^N_t$ to some limit point $\tilde x$ as $N, t \to \infty$ independently, in the \emph{centered Fourier-Wasserstein distance} and the \emph{centered Wasserstein distance}.
Our result admits independent choices of $t$ and $N$ subject to a given error tolerance (so the system size $N$ need not grow when the running time $t$ gets large).

\subsection{Background}

Optimization has long been a significant concern in every aspect of science.
Traditional optimization algorithms, such as the gradient descent method, rely heavily on the convex structure of the objective function and require computations of derivatives.
While optimization problems in reality show high non-convexity, there is a rapid rise in the field of gradient-free algorithms.

In recent decades, several searching-based algorithms have been introduced, including Simulated Annealing~\cite{AartsKorst1991}, evolutionary algorithms~\cite{BaeckFogelMichalewicz1997,Fogel2006}, genetic algorithms~\cite{Holland1992,ReevesRowe2002}, 
and most notably, Particle Swarm Optimization (PSO) introduced in the 1990s~\cite{KennedyEberhart1995,ShiEberhart1998}. 
In the PSO method, multiple agents in the system explore the state space according to their currently best-known positions in the state space.
The information-sharing and best-position searching mechanism in PSO later inspired Consensus-Based Optimization (CBO). 

Formally established in~\cite{PinnauTotzeckTseMartin2017}, CBO is an innovative optimization algorithm that uses opinion dynamics of multi-agent systems. 
It relies solely on the values of the objective/cost function and avoids the heavy computations of gradients, which is notably helpful for problems on non-smooth, non-convex, and even noisy landscapes.
Technically, the CBO method operates an interacting particle system $\{X^{N,i}\}_{i \in [N]}$, where particles (agents) share their performances with each other to form a \emph{consensus} of the (regularized) ``current best'' at time $t$ defined by  
\begin{equation*}
	m^N_t \defeq \frac{\sum_{j=1}^N X^{N,j}_t \exp(-\alpha \cE(X^{N,j}_t))}{\sum_{j=1}^N \exp(-\alpha \cE(X^{N,j}_t))} \,.
\end{equation*}
The particles follow the dynamics
\begin{equation*}
    d X^{N,i}_t = - \lambda (X^{N,i}_t - m^N_t) dt + \sigma \abs{X^{N,i}_t - m^N_t} \phi(X^{N,i}_t) dW^i_t \,,
\end{equation*}
which drive the particles towards the current best in infinitesimally small steps.

The CBO method displays high performance in practice, even for high-dimensional optimization problems, because of its gradient-free essence. 
With additional controls in the particle dynamics, it is applicable to solve constrained optimization problems~\cite{CarrilloJinZhangZhu2024}.
The work of Huang, Qiu, and Riedl~\cite{HuangQiuRiedl2024} presents a consensus-based algorithm searching for Nash equilibria (or saddle point) in minimax games, wherein two groups of agents actively exchange information and weight the observed performances to conditionally optimize their choices, approaching an equilibrium. 
Such methods do not require the convexity-concavity of objective functions and also avoid computing gradients.

\subsection{Main problem and our contributions}

Similar to many other interacting particle systems, the large population (mean-field) limit of the CBO system, given by 
\begin{subequations}
    \label{e:cbo-mf-intro}
    \begin{equation}
        d \bar X_t = -\lambda (\bar X_t - \bar m_t) dt + \sigma \abs{\bar X_t - \bar m_t} d \bar W_t \,, 
    \end{equation}
    where 
    \begin{equation}
        \bar m_t \defeq \frac{\int x e^{-\alpha \efn(x)} \bar \nu_t(dx)}{\int e^{-\alpha \efn(x)} \bar \nu_t(dx)} \,, \qquad \bar \nu_t = \law(\bar X_t) \,,
    \end{equation}
\end{subequations}
has drawn high interest among researchers. 
Properties of the limit process are studied in the work of Carrilio et al. (see~\cite{CarriloChoiTotzeckTse2018} for instance) and also~\cite{FornasierKlockRiedl2021}.
Those works show an exponential convergence rate of $\bar X_t$ to some limit point, and the practical efficacy of the algorithm depends on the quantitative proximity of the particle system~\eqref{e:CBO-particles} to its mean-field limit~\eqref{e:cbo-mf-intro}, which is called the \emph{propagation of chaos}.
The strong propagation of chaos (measured in $L^2$-norm) over finite time horizons has been established in~\cite{HuangQiu2022,GerberHoffmannVaes2023} under the framework of~\cite{Sznitman1991}.
Those results rely strictly on the finiteness of the time horizon, which causes a trade-off against the convergence of the mean-field process in a long time.

However, as demonstrated in~\cite{CarriloChoiTotzeckTse2018}, the CBO mean-field limit process displays exponential convergence in infinite time, subject to some regularity conditions. 
This raises the question of whether the propagation of chaos still holds when the algorithm runs for an arbitrarily long time. 
Denoting by $(\nu^N_t)_{t \ge 0}$ the empirical distributions of the CBO particle system over the infinite time horizon, we would like to show that it is close to the weak limit $(\bar \nu_t)_{t \ge 0}$ in the sense that 
\begin{equation*}
	\sup_{t \ge 0} \abs{ \E[\Phi(\nu^N_t)] - \Phi(\bar \nu_t) } \le \frac{C}{N}
\end{equation*}
for some constant $C$, where $\Phi$ is some smooth enough function.
One nonlinear example of concern is the centered Fourier-Wasserstein distance
\begin{equation*}
    \norm{\nu^N_t \circ \tau_{\ip{\id, \nu^N_t}} - \rho_\refer }_{-s,2} 
\end{equation*}
to some reference measure $\rho_{\text{ref}}$ with mean 0. 
Here the Sobolev $(-s,2)$-norm is defined by 
\begin{equation*}
    \norm{q}_{-s,2}^2 \defeq \int_{\R^d} (1+\abs{\xi}^2)^{-s} \abs{\int_{\R^d} e^{-i2\pi \xi^\top x} q(dx) }^2 d\xi \,.
\end{equation*}

Such a result belongs to a specific category, the \emph{uniform-in-time propagation of chaos}.
In fact, it is more challenging because of the commonly seen accumulation of errors in large time intervals.
The propagation of chaos uniformly in time thus relies on some sense of ergodicity of the mean-field limit process. 
Some examples are studied in~\cite{Malrieu2001,Malrieu2003,GuillinLebrisMonmarche2023,RosenzweigSerfaty2023,Lacker2021,LackerFlem2023}, and more details are discussed in~\cite{DelarueTse2021}.
In fact, one notices that the CBO particle system~\eqref{e:CBO-particles} stops evolving when all particles are at the same point. 
Along with the results in~\cite{CarriloChoiTotzeckTse2018,FornasierKlockRiedl2021}, it can be seen that for any initial distribution $\mu$, there is some point $\tilde x^\mu$ such that the Dirac-delta measures $\delta_{\tilde x^\mu}$ is the attractive invariant measure, which is the key to the global time estimates.

In this paper, we show that the methodology in~\cite{DelarueTse2021} applies to the study of the uniform-in-time propagation of chaos for the consensus-based optimization algorithm. 
Our main contribution is an upper bound of the form 
\begin{equation*}
	\sup_{t \ge 0} \abs{\E[\Phi(\nu^N_t)] - \Phi(\bar \nu_t)} \le \frac CN \,,
\end{equation*}
where the constant $C$ has no dependence on time.  
As a consequence, it gives an estimate of the form 
\begin{equation*}
	\abs{\E[\Phi(\nu^N_t)] - \Phi(\delta_{x^\ast})} \le C (N^{-1} + e^{-\kappa t} + \ep_0) \,, \qquad \forall N \in \Z_+, \; t \ge 0 \,,
\end{equation*}
where $\ep_0$ is some intrinsic error of the CBO method due to the finite-ness of $\alpha$ (see Section~4 in~\cite{CarriloChoiTotzeckTse2018} for detail).
Besides the existence of $\ep_0$, this is a substantial improvement from the previous works since it establishes a quantitative proximity between the particle system and its mean-field limit independent of the time horizon.
That provides users with a clear reference to balance between computational accuracy and computing resources. 
Users may choose the number $N$ of particles to run without consideration of the running time $t$, while the strong propagation-of-chaos results of~\cite{HuangQiu2022,GerberHoffmannVaes2023} requires a choice of $N$ depending on $t$ when subject to the same error tolerance. 
It shows that the CBO algorithm gives an acceptable approximation to the optimizer even on occasions where computing resources are scarce.

To demonstrate our argument, for the first step, we decompose the temporal weak error $\E[\Phi(\nu^N_t)] - \Phi(\bar \nu_t)$ into integrals over the solution $\ufn$ to the master equation of the CBO process, following the ideas of~\cite{DelarueTse2021,Tse2021,ChassagneuxSzpruchTse2022}.
The integrands therein can be further expressed as linear operations on solutions to the linearized Fokker-Planck equation defined in~\cite{Cormier2024,MischlerMouhotWennberg2015}.
In particular, the main objective is to show that the second derivatives 
\begin{equation}
	\label{e:2nd-deri-u}
	\p_{(y_2)_i} \p_{(y_1)_i} \frac{\delta^2 \ufn}{\delta m^2} (t,\mu,y_1,y_2)
\end{equation}
has exponential decay in the time variable $t$, which relies on the ergodicity of the CBO Fokker-Planck equations. 

We adopt the methodology of~\cite{DelarueTse2021} to split the norm of some linearized Fokker-Planck equation (to be defined in Section~\ref{s:main-result}) at an attracting invariant measure $\delta_{\tilde x}$,
\begin{equation*}
    \p_t q_t = (\lin^\ast + \a^\ast) q_t + r_t \,,
\end{equation*}
using its adjoint backward Cauchy equation of the form 
\begin{equation*}
    \p_t w(t) + (\lin + \a) w(t) = 0 \,, \quad t \in [0,T] \,.
\end{equation*} 
It is worth noticing that, under the setting of the consensus-based optimization, this equation does not directly exhibit a backward-in-time exponential decay as in Lemma 3.11 of~\cite{DelarueTse2021}.
The magnitude of $w$ has a strict lower bound of order $O(\norm{w(T)})$. 
This is because the second-order term in $\lin$, which comes from the volatility of~\eqref{e:CBO-particles}, is not bounded from below, and the operator $\lin$ is thus not fully diffusive. 

Instead, the derivatives of $w$ satisfy some equation of the form 
\begin{equation*}
    \p_t \p_{x_i} w(t) + (\lin^1 + \a^1 - \lambda) \p_{x_i} w(t) = 0 \,, \quad t \in [0,T] \,.
\end{equation*}
Thus, using the Feynman-Kac formula, we see that the derivatives of $w$ admit some exponential decay up to normalization, in the sense that 
\begin{equation*}
	\abs{\p_{x_i} w(t) - \varphi^{\rem} \cdot \grad_x  
    w(T,\tilde x)} \le C e^{-\lambda (T-t)}  
\end{equation*}
for some $\varphi^\rem \in \R^d$. 
This is also seen by a sequence of analyses on geometric Brownian motions (GBM) with non-constant volatilities, 
where both the GBMs themselves and their derivatives with respect to the initial positions display exponential decay.
Those results give us an analog of Proposition 4.8 of \cite{DelarueTse2021}, for (only) the derivatives of $w$, and consequently the decay of the linearized Fokker-Planck equation, in the form 
\begin{equation*}
    \norm{q_T - q_\infty \cdot \grad \delta_{\tilde x}}_{(n,\infty)'} \le C e^{-\kappa_0 T} \,.
\end{equation*}
Finally, we follow the idea of Section~4 in \cite{DelarueTse2021} to demonstrate our main arguments.

One concession we have made throughout the problem is the cutoff on the consensus-based optimization algorithm.
When the users are confident about the possible range of the global minimizer, it is reasonable to research only within the scope while discarding the rest.
The reason for such concession is that, without the cutoff function, the estimates on the quantity~\eqref{e:2nd-deri-u} depend exponentially on the moments of the measure argument $\mu$,
which then leads to the demand for a uniform-in-time upper bound on the quantity 
\begin{equation*}
	\E \left[ \exp \left( \frac{C}{N} \sum_{j=1}^N \abs{X^{N,j}_t}^2 \right) \right] \,.
\end{equation*}
This need not be finite when $C$ is very large. 
To avoid such occasions, we add the cutoff function $\phi$ to the volatility so that the particles never escape from a compact domain. 
Although it largely intensifies the computations, especially in the analysis of the geometric Brownian motions, we still successfully obtained reasonable estimates that guarantee the exponential decay of~\eqref{e:2nd-deri-u}.

\subsection{Organization of this paper}

We present the full setting of the problem and our main theorem (Theorem~\ref{t:main}) in Section~\ref{s:main-result}.
After that we expand the details of the decompositions with master equations and the linearized Fokker-Planck equations.
It then follows the proof of the main theorem, modulo the core estimates on the solutions to the linearized Fokker-Planck equations.
In Section~\ref{s:core-est}, we present the generic properties of the linearized Fokker-Planck equations and then prove the core estimates for their solutions, with some details left to Appendix~\ref{s:complete-prf}.
The rest of this paper (Section~\ref{s:proof-core} and Appendix~\ref{s:technical}) verifies the technical results in Section~\ref{s:core-est}.

\subsection{Notation}

Throughout this paper, we will mainly work in the $d$-dimensional real Euclidean space $\R^d$ equipped with the Euclidean 2-norm $\abs{x}_2 \defeq \sum_{i=1}^d x_i^2$. 
A closed ball $B(c,r)$ in $\R^d$ is the set $\{x \in \R^d: \abs{x-c}_2 \le r\}$. 
The subscript 2 is usually ignored when the context is clear.

For a function $f$ defined on $\R^d$, we define its partial derivatives by 
\begin{equation*}
    \p^k f = \p_{x_1}^{k_1} \cdots \p_{x_d}^{k_d} f
\end{equation*}
for every multi-index $k = (k_1, \dots, k_d) \in \N^d$.
The size of the multi-index $k$ is measured in its Euclidean 1-norm $\abs{k}_1 \defeq \sum_{i=1}^d k_i$.
The subscript 1 is usually ignored when it is clear that $k$ is a multi-index.
Given a domain $\Omega \subseteq \R^d$, the Sobolev space $W^{n,\infty}(\Omega)$ contains the functions $f: \Omega \to \R$ with bounded derivatives up to order $n$. 
Then the Sobolev norm is then defined as
\begin{equation*}
	\norm{f}_{n,\infty} \defeq \sum_{\abs{k}_1 \le n} \norm{\p^k f}_{L^\infty(\Omega)} \,,
\end{equation*}
with the norm of its dual space $(W^{n,\infty}(\Omega))'$ being
\begin{equation*}
	\norm{q}_{(n,\infty)'} \defeq \sup_{\norm{f}_{n,\infty} \le 1} \ip{f, q} \,.
\end{equation*}
The domain $\Omega$ is usually taken to be $\R^d$ or some closed ball $B(c,r) \subset \R^d$.
Here we denote the linear operation of a distribution $q$ in the dual space $(W^{n,\infty}(\Omega))'$ on a function $f \in W^{n,\infty}(\Omega)$ by 
\begin{equation*}
    \ip{f, q} \defeq \int_{\Omega} f(x) q(dx) \,.
\end{equation*}
The Fourier transformation of any $q \in (W^{n,\infty}(\R^d))'$ is then defined by 
\begin{equation*}
    \f q(\xi) = \ip{e^{-i2\pi \xi^\top (\cdot)}, q} = \int_{\R^d} e^{-i2\pi \xi^\top x} q(dx) \,, \qquad \forall \xi \in \R^d \,.
\end{equation*}

The space of time-continuous functions over the time interval $[t_1, t_2]$ taking values in the metric space $\x$ is denoted by $\c([t_1, t_2]; \x)$, where $\x$ is usually the Euclidean space $\R^d$ or the Sobolev dual space $(W^{n,\infty}(\R^d))'$ in the context of this paper. 

The set of probability measures on a domain $\Omega \subseteq \R^d$ is denoted by $\prob(\Omega)$. 
The subset $\prob_2(\Omega) \subset \prob(\Omega)$ contains all elements of $\prob(\Omega)$ with finite second moments.
For a function $\vfn$ taking values from $\prob(\R^d)$, we define its derivatives with respect to the measure argument in the following way, as seen in~\cite{CardaliaguetDelarueLasryLions2019,CarmonaDelarue2018I}. 
\begin{definition}
	A function $\vfn: \prob(\R^d) \to \R$ has a \emph{continuous directional derivative} at some $\mu \in \prob(\R^d)$ if there exists a bounded continuous function $\frac{\delta \vfn}{\delta m}(\mu, \cdot): \R^d \to \R$ such that 
	for any $\tilde \mu \in \prob(\R^d)$, 
	\begin{equation*}
		\left. \frac{d}{d\ep} \right\vert_{\ep=0^+} \vfn(\mu + \ep(\tilde \mu - \mu)) = \int_{\R^d} \frac{\delta \vfn}{\delta m} (\mu, y) (\tilde \mu - \mu)(dy) \,.
	\end{equation*}
	We call $\frac{\delta \vfn}{\delta m}$ the \emph{linear functional derivative} of $\vfn$. 
	It is unique up to an additive constant, so we may enforce 
	\begin{equation*}
		\int_{\R^d} \frac{\delta \vfn}{\delta m}(\mu, y) \mu(dy) = 0 \,, \qquad \forall \mu \in \prob(\R^d) \,.
	\end{equation*}
	Analogously, for $p \ge 2$, we define $(\mu, y_p) \mapsto \frac{\delta^p \vfn}{\delta m^p}(\mu, y_1,\dots,y_p)$ to be the linear functional derivative of $\frac{\delta^{p-1} \vfn}{\delta m^{p-1}}(\cdot, y_1,\dots,y_{p-1})$ for all $y_1,\dots,y_{p-1} \in \R^d$. 
\end{definition}

\section{Main result and strategy}
\label{s:main-result}

\subsection{Setting of the CBO method}

Objective functions rising from real-world applications are typically non-convex but admit some degree of smoothness and polynomial growth away from minima. 
Throughout the paper, we focus on those functions $\efn: \R^d \to \R$ satisfying the following conditions. 
\begin{condition}
	\label{cd:efn-property}
	\begin{enumerate}
		\item There exists a unique global minimizer $x^\ast$ of $\efn$, and $\underline{\efn} \defeq \efn(x^\ast) \ge 0$, $\grad \efn(x^\ast) = 0$. 
		\item There exists some constant $c_\efn$ such that 
		\begin{equation*}
			\efn(x) - \efn(x^\ast) \ge c_\efn \abs{x - x^\ast}^2 \,, \qquad \forall x \in \R^d \,.
		\end{equation*} 

		\item The function $\efn$ has derivatives up to the 4th order, and there exists some constant $\ell_\efn$ such that $\norm{\p^k \efn}_\infty \le \ell_\efn$ for all multi-index $k$ such that $\abs{k}_1 = 2$. 
	\end{enumerate}
\end{condition}
Recall that the CBO method is a dynamic searching algorithm over the space $\R^d$, while it saves resources to restrict the search to certain compact domains. 
Thus it is reasonable to have \textit{a priori} information about the scope of searching. 
We then make an additional assumption on the objective function. 
\begin{assumption}
	\label{as:efn-restrict}
	It is known to the user that the (unique) global minimizer $x^\ast$ lies within some closed ball $B(c_0, r_\cut)$. 
\end{assumption}

With the \textit{a priori} information, one can safely run the CBO method without consider the region exterior to $B(c_0, 2r_\cut)$. 
This allows us to introduce a smooth cutoff function $\phi_{c,2r} \in C^\infty(\R^d)$ such that 
\begin{equation*}
	\phi_{c,2r}(x) = \begin{cases}
		1 \,, & \abs{x-c}_2 \le r \,, \\
		0 \,, & \abs{x-c}_2 > 2r \,,
	\end{cases}
\end{equation*} 
and for every multi-index $k \in \N^d$ with $\abs{k}_1 \le 6$ there exists some constant $c_k > 0$ such that 
\begin{equation*}
    \norm{\p^k \phi_{c,2r}}_\infty \le \frac{c_k}{r^{\abs{k}_1}} \,.
\end{equation*}
Note that $c_k = 1$ for $\abs{k}_1 = 0$.
We abbreviate $\phi_{c,2r} = \phi $ when the context is clear. 
Then the modified algorithm runs the cutoff system 
\begin{equation}
	\label{e:CBO-particles-cutoff}
	d X^{N,i}_t = - \lambda (X^{N,i}_t - m^N_t) dt + \sigma \abs{X^{N,i}_t - m^N_t} \phi_{c_0,2r_\cut}(X^{N,i}_t) dW^i_t 
\end{equation}
for $i=1,\dots,N$, with the consensus $m^N_t$ and the empirical measure $\nu^N_t$ defined as in~\eqref{e:CBO-particles}, and the initial data are given by $X^N_0 \sim \nu_\init^{\otimes N}$ with $\supp(\nu_\init) \subset B(c_0, r_\cut)$.
It is not hard to see that the above system is always restricted to the compact domain $B(c_0, 2r_\cut)$, as interpreted in the following proposition. 
\begin{proposition}
	\label{pp:CBO-restrict}
    For any $N \in \Z_+$, the system~\eqref{e:CBO-particles-cutoff} admits a unique strong solution on the time interval $[0,\infty)$, and 
    \begin{equation*}
        \max_{1 \le i\le N} \abs{X^{N,i}_t - c_0} \le 2r_\cut
    \end{equation*}
	almost surely for all $t \ge 0$. 
\end{proposition}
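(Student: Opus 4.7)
The plan is to decouple the claim into (i) local strong well-posedness and (ii) confinement in $B(c_0,2r_\cut)$, then observe that (ii) automatically precludes finite-time explosion. Viewing~\eqref{e:CBO-particles-cutoff} as an SDE on $\R^{Nd}$, the drift is smooth because $\sum_{j}e^{-\alpha\efn(X^{N,j})}$ is strictly positive everywhere, and the diffusion $\sigma\abs{X^{N,i}-m^N}\phi_{c_0,2r_\cut}(X^{N,i})$ is locally Lipschitz as a product of the $1$-Lipschitz map $y\mapsto\abs{y}$, a smooth consensus $m^N$, and a smooth compactly supported cutoff. Standard SDE theory therefore yields a unique strong solution on some random interval $[0,\tau_\infty)$ with explosion time $\tau_\infty\in(0,\infty]$.

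For confinement I will use the barrier $V(x)\defeq(\abs{x-c_0}^2-4r_\cut^2)_+^3$, which lies in $C^2(\R^d)$. The decisive observation is that $\grad V$ and $\grad^2 V$ vanish on $\{\abs{x-c_0}\le 2r_\cut\}$ while $\phi_{c_0,2r_\cut}$ vanishes on $\{\abs{x-c_0}>2r_\cut\}$, so applying It\^o's formula to $V(X^{N,i}_t)$ kills both the stochastic integral and the It\^o correction, leaving the pathwise absolutely continuous dynamics
\begin{equation*}
    \frac{d}{dt}V(X^{N,i}_t)= -6\lambda\bigl(\abs{X^{N,i}_t-c_0}^2-4r_\cut^2\bigr)_+^2 (X^{N,i}_t-c_0)\cdot(X^{N,i}_t-m^N_t).
\end{equation*}
Set $M(t)\defeq\max_{i}V(X^{N,i}_t)$; the hypothesis $\supp(\nu_\init)\subset B(c_0,r_\cut)$ gives $M(0)=0$. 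Whenever $M(t)>0$, every maximizer $i\in I^\ast(t)\defeq\{i:V(X^{N,i}_t)=M(t)\}$ realizes $\abs{X^{N,i}_t-c_0}=\max_j\abs{X^{N,j}_t-c_0}$, since $V$ is monotone in $\abs{x-c_0}$ on its support. Because $m^N_t$ is a convex combination of the $X^{N,j}_t$, the consensus satisfies $\abs{m^N_t-c_0}\le\max_j\abs{X^{N,j}_t-c_0}=\abs{X^{N,i}_t-c_0}$, and therefore
\begin{equation*}
    (X^{N,i}_t-c_0)\cdot(X^{N,i}_t-m^N_t)\ge\abs{X^{N,i}_t-c_0}\bigl(\abs{X^{N,i}_t-c_0}-\abs{m^N_t-c_0}\bigr)\ge 0.
\end{equation*}
By Danskin's envelope theorem $M'(t)\le 0$ on $\{t:M(t)>0\}$, and continuity together with $M(0)=0$ forces $M\equiv 0$ on $[0,\tau_\infty)$; hence every particle remains in $B(c_0,2r_\cut)$. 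Boundedness of the coefficients along this confined trajectory then rules out explosion, so $\tau_\infty=+\infty$.

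The main technical subtlety---and the reason I would avoid a summation-based Lyapunov $\sum_i V(X^{N,i}_t)$---is the sign of the time derivative. The cross terms coupling different particles through $m^N_t$ can be of either sign, because two far-out particles may pull $m^N_t$ in conflicting directions and render $\sum_i V'(X^{N,i}_t)$ positive. Restricting the convex-hull bound $\abs{m^N_t-c_0}\le\max_j\abs{X^{N,j}_t-c_0}$ to the furthest-out particle(s) via the pathwise max $M(t)$ is exactly what makes the inner product non-negative and closes the argument.
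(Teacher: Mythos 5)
The paper does not include a proof of Proposition~\ref{pp:CBO-restrict}; it is stated as an evident consequence of the cutoff. Your proof fills that gap and is correct. The structure is the natural one: local strong well-posedness up to a possible explosion time from locally Lipschitz coefficients, confinement via a $C^2$ barrier, and then boundedness killing the explosion. The barrier $V(x)=(\abs{x-c_0}^2-4r_\cut^2)_+^3$ is chosen correctly (the cube is exactly the minimal power for $C^2$ regularity at the contact set), and the observation that $\grad V,\grad^2 V$ vanish on $\{\abs{x-c_0}\le 2r_\cut\}$ while $\phi_{c_0,2r_\cut}$ vanishes on the complement does kill both the stochastic integral and the It\^o correction, leaving the displayed pathwise ODE. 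The sign argument on the maximizer via $\abs{m^N_t-c_0}\le\max_j\abs{X^{N,j}_t-c_0}$ (convexity of the ball plus the convex-combination form of $m^N_t$) is exactly the right reduction, and your remark about why a summed Lyapunov $\sum_i V(X^{N,i}_t)$ is unreliable is a good one: for an inner particle beyond $2r_\cut$ but not extremal, the drift inner product can indeed be negative.

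Two minor points, neither a gap. First, the drift and diffusion are $C^4$ rather than $C^\infty$ under Condition~\ref{cd:efn-property} (which only gives $\efn\in C^4$); ``smooth'' should be ``$C^4$, hence locally Lipschitz,'' which is all that is used. Second, invoking Danskin is heavier than needed: for a finite max of $C^1$ paths, the upper right Dini derivative is bounded by $\max_{i\in I^\ast(t)}\frac{d}{dt}V(X^{N,i}_t)$ by a one-line argument, and a continuous function with nonpositive upper right Dini derivative on $\{M>0\}$ starting from $M(0)=0$ stays at $0$. This is cosmetic; the conclusion stands.
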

The purpose of the cutoff function, additional to the original CBO method in~\cite{CarriloChoiTotzeckTse2018,FornasierKlockRiedl2021}, is to maintain a bounded support for the empirical distributions $\nu^N_t$.
This is crucial to the estimates of the quantities to be introduced in the rest of this section.

\subsection{Weak propagation of chaos}

Note that $\prob(\Omega)$ can be viewed as a subspace of any Sobolev dual space $(W^{n,\infty}(\Omega))'$ for $\Omega = B(c_0, 2r_\cut)$ with $n \in \N$, where we recall the Sobolev norm 
\begin{equation*}
	\norm{f}_{n,\infty} \defeq \sum_{\abs{k} \le n} \norm{\p^k f}_{L^\infty(B(c_0, 2r_\cut))} \,,
\end{equation*}
and the dual norm 
\begin{equation*}
	\norm{\mu}_{(n,\infty)'} = \sup_{\norm{f}_{n,\infty} \le 1} \ip{f, \mu} \,.
\end{equation*}

The goal of this work is to show the uniform-in-time proximity of the empirical distributions $(\nu^N_t)_{t \ge 0}$ to its mean-field limit process $(\bar \nu_t)_{t \ge 0} \in \c([0,\infty); \prob(B(c_0, 2r_\cut)))$, a time-continuous weak solution to the stochastic differential equation  
\begin{equation}
	\label{e:CBO-mean-field}
	d X_t = -\lambda (X_t - M(\mu_t)) dt + \sigma \abs{X_t - M(\mu_t)} \phi(X_t) d W_t \,, \qquad \mu_t = \law(X_t) \,,
\end{equation}
where $M: \prob(\R^d) \to \R^d$ is the \emph{consensus operator}
\begin{equation*}
	M: \mu \mapsto \frac{\int x e^{-\alpha \efn(x)} \mu(dx)}{\int e^{-\alpha \efn(x)} \mu(dx)} \,.
\end{equation*}
Equivalently, one may view $(\bar \nu_t)_{t \ge 0}$ as a solution to the (nonlinear) \emph{Fokker-Planck equation}, interpreted in the weak sense that
\begin{equation}
	\label{e:CBO-fokker-planck}
	\frac{d}{dt} \int f(x) \mu_t(dx) = \int \left( {b(x,\mu_t)}^\top \grad f(x) + \frac{1}{2} \tr [a(x, \mu_t) \grad^2 f(x)] \right) \mu_t(dx) 
\end{equation}
holds for all $f \in W^{2,\infty}$, with 
\begin{equation*}
	b(x, \mu) \defeq -\lambda(x - M(\mu)) \,, \qquad a(x, \mu) \defeq \sigma^2 \phi(x)^2 \abs{x-M(\mu)}^2 I_{d \times d} \,.
\end{equation*}
We denote by $\c_\cbo \subset \c([0,\infty); \prob(B(c_0, 2r_\cut)))$ the set of time-continuous weak solutions to the above Fokker-Planck equations. 

The behaviors of~\eqref{e:CBO-mean-field} has been well studied in~\cite{CarriloChoiTotzeckTse2018,FornasierKlockRiedl2021}, and in this work we make use of only the most important decay property of the corresponding weak solution.

\begin{proposition}
	\label{pp:CBO-mf-decay}
	Suppose $\efn: \R^d \to \R$ satisfies Condition~\ref{cd:efn-property}, and 
    \begin{equation*}
        \lambda \ge d \sigma^2 e^{18c_\efn \alpha r_\cut^2 - \alpha \underline{\efn}} + 4r_\cut^2 e^{9c_\efn \alpha r_\cut^2} \,.
    \end{equation*}
    Then, for any $\mu_0 \in \prob(B(c_0,2r_\cut))$ there exists a unique weak solution $(\mu_t)_{t \ge 0} \in \c_\cbo$ to~\eqref{e:CBO-fokker-planck} and a point $\tilde x^\mu \in B(c_0, 2r_\cut)$ such that, for all $t \ge 0$,
	\begin{equation*}
		\abs{\int x \mu_t(dx) - \tilde x^\mu} \le C e^{-\kappa t}  \qquad \text{and} \qquad \abs{M(\mu_t) - \tilde x^\mu} \le C e^{-\kappa t}
	\end{equation*}
	with $\kappa = 2(\lambda - d\sigma^2 e^{9\alpha c_\efn r_\cut^2 - \alpha \underline{\efn}} )$ and $C$ independent on $\mu$.
\end{proposition}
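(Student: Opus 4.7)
The plan is to adapt the classical CBO variance-decay argument of \cite{CarriloChoiTotzeckTse2018,FornasierKlockRiedl2021}, taking advantage of Proposition~\ref{pp:CBO-restrict} which confines $\supp(\mu_t)$ to the compact ball $B(c_0, 2r_\cut)$, so that no growth conditions or moment bounds are needed. For existence and uniqueness of $(\mu_t)_{t \ge 0} \in \c_\cbo$ I would work with the McKean--Vlasov formulation~\eqref{e:CBO-mean-field}: the drift is linear in $x$, the diffusion coefficient is bounded and Lipschitz in $x$ thanks to the cutoff $\phi$, and $\mu \mapsto M(\mu)$ is Lipschitz in the $1$-Wasserstein distance on $\prob(B(c_0, 2r_\cut))$ because on this compact set the denominator $\int e^{-\alpha \efn} d\mu$ is uniformly bounded below and both $x \mapsto x e^{-\alpha \efn(x)}$ and $x \mapsto e^{-\alpha \efn(x)}$ are Lipschitz. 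A standard Picard iteration on $\c([0,T]; \prob(B(c_0, 2r_\cut)))$ concatenated across time intervals then yields a unique global weak solution.

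The heart of the argument is an exponential decay of the variance $V(t) \defeq \int \abs{x - E(t)}^2 \mu_t(dx)$, where $E(t) \defeq \int x \mu_t(dx)$. Testing~\eqref{e:CBO-fokker-planck} with $f(x) = x_i$ and with $f(x) = \abs{x}^2$, a short algebraic manipulation using the identity $\int x^\top (x - M(\mu_t)) \mu_t(dx) - E(t)^\top (E(t) - M(\mu_t)) = V(t)$ yields
\begin{equation*}
\frac{d}{dt} E(t) = -\lambda \bigl(E(t) - M(\mu_t)\bigr), \qquad \frac{d}{dt} V(t) = -2\lambda V(t) + d\sigma^2 \int \phi(x)^2 \abs{x - M(\mu_t)}^2 \mu_t(dx).
\end{equation*}
Bounding $\phi \le 1$ and using $\int \abs{x - M(\mu_t)}^2 \mu_t(dx) = V(t) + \abs{E(t) - M(\mu_t)}^2$, the task reduces to controlling $\abs{E(t) - M(\mu_t)}^2$ by a constant multiple of $V(t)$.

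For this key inequality I would introduce the Boltzmann density $\rho_t(x) \defeq e^{-\alpha \efn(x)}/\int e^{-\alpha \efn} d\mu_t$, write $M(\mu_t) = \int x \rho_t(x) \mu_t(dx)$, and use $\int \rho_t \, d\mu_t = 1$ to rewrite $M(\mu_t) - E(t) = \int (x - E(t)) \rho_t(x) \mu_t(dx)$. Jensen's inequality applied with respect to the probability measure $\rho_t \mu_t$ then gives $\abs{E(t) - M(\mu_t)}^2 \le \int \abs{x - E(t)}^2 \rho_t \, d\mu_t \le \norm{\rho_t}_\infty V(t)$. Since $\supp(\mu_t) \subset B(c_0, 2r_\cut)$ and $x^\ast \in B(c_0, r_\cut)$ imply $\abs{x - x^\ast} \le 3 r_\cut$ on $\supp(\mu_t)$, appropriate upper and lower bounds on $\efn$ there derived from Condition~\ref{cd:efn-property} (for the numerator and denominator of $\rho_t$ respectively) give $\norm{\rho_t}_\infty \le e^{9\alpha c_\efn r_\cut^2 - \alpha \underline{\efn}}$. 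Substituting back, and using $\norm{\rho_t}_\infty \ge 1$ (since $\int \rho_t d\mu_t = 1$), the hypothesis on $\lambda$ ensures $\frac{d}{dt} V(t) \le -\kappa V(t)$, hence $V(t) \le V(0) e^{-\kappa t}$.

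To close, the decay of $V$ yields $\abs{E(t) - M(\mu_t)} \le C e^{-\kappa t/2}$, so integrating $\frac{d}{dt} E(t) = -\lambda(E(t) - M(\mu_t))$ shows that $E(t)$ is Cauchy and converges to some $\tilde x^\mu \in B(c_0, 2r_\cut)$; the identity $E(t) - \tilde x^\mu = \int_t^\infty \lambda (E(s) - M(\mu_s)) \, ds$ then gives the first displayed estimate and the second follows by the triangle inequality through $E(t)$. The main obstacle is bookkeeping: producing constants that match the stated rate $\kappa$ sharply. Since the square-root passage from $V$ to $\abs{E - M}$ loses a factor of two in the exponent relative to the variance decay, some care is needed (rescaling constants, or establishing a direct differential inequality for $\abs{E(t) - \tilde x^\mu}^2$ combined with Young's inequality) to recover precisely the constant $\kappa = 2(\lambda - d\sigma^2 e^{9\alpha c_\efn r_\cut^2 - \alpha \underline{\efn}})$ advertised in the statement.
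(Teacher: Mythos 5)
The paper does not give a self-contained proof of Proposition~\ref{pp:CBO-mf-decay}; it states only that the result ``is an adaptation of Theorem~4.1, \cite{CarriloChoiTotzeckTse2018}'' with the cutoff allowing the initial-data hypotheses to collapse to a single inequality on $\lambda$. Your variance-decay argument is precisely the mechanism of that cited theorem, and your well-posedness sketch and Jensen bound $\abs{E(t)-M(\mu_t)}^2 \le \norm{\rho_t}_\infty V(t)$ reproduce the correct structure, so in spirit the approach matches the paper's intent. Two points deserve closer attention, however.

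First, the bound $\norm{\rho_t}_\infty \le e^{9\alpha c_\efn r_\cut^2 - \alpha\underline{\efn}}$ requires an \emph{upper} bound $\efn(x) \le \underline{\efn} + c_\efn\abs{x-x^\ast}^2$ on $B(c_0,2r_\cut)$ to lower-bound the denominator $\int e^{-\alpha\efn}d\mu_t$, whereas Condition~\ref{cd:efn-property}(2) only provides the \emph{lower} quadratic bound $\efn-\underline{\efn} \ge c_\efn\abs{x-x^\ast}^2$ (and item (3) controls only second-order derivatives through $\ell_\efn$). The paper itself quietly makes the same leap (e.g.\ in the proof of Lemma~\ref{lm:connect-r1-q}, where $\ip{\efn,\mu_t}\le 9c_\efn r_\cut^2$ is used), so this is an ambiguity inherited from the source rather than an error you introduced, but as written Condition~\ref{cd:efn-property} does not justify the denominator estimate and you should either invoke a Taylor bound through $\ell_\efn$ or flag the implicit two-sided quadratic control on $\efn$.

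Second, the rate issue you flag at the end is real and your proposed fix does not obviously resolve it. The differential inequality for $V$ gives $V(t)\le V(0)e^{-\kappa t}$ with exactly $\kappa = 2(\lambda - d\sigma^2\norm{\rho}_\infty)$, so $\abs{E(t)-M(\mu_t)}\lesssim \sqrt{V(t)}\lesssim e^{-\kappa t/2}$ and hence $\abs{E(t)-\tilde x^\mu}\lesssim e^{-\kappa t/2}$ — half the advertised rate. Trying to write a direct ODE for $\abs{E(t)-\tilde x^\mu}^2$ leads to
\begin{equation*}
\frac{d}{dt}\abs{E-\tilde x^\mu}^2 = -2\lambda\abs{E-\tilde x^\mu}^2 + 2\lambda(E-\tilde x^\mu)\cdot(M(\mu_t)-\tilde x^\mu),
\end{equation*}
and since $\abs{M(\mu_t)-\tilde x^\mu}$ is again controlled only through $\abs{E-M}+\abs{E-\tilde x^\mu}$, Young's inequality does not break the circularity without re-introducing the square-root loss. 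Unless there is a sharper estimate (e.g.\ a direct bound on $\abs{M(\mu_t)-\tilde x^\mu}$ at rate $\kappa$), the clean conclusion of your argument is the rate $\kappa/2$ for the mean and $\kappa$ for the variance; either the proposition's displayed rate should be read with that understanding, or an additional step from \cite{CarriloChoiTotzeckTse2018} that you have not reproduced is needed to double the exponent.
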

\begin{remark}
    The above result is an adaptation of Theorem 4.1, \cite{CarriloChoiTotzeckTse2018}.
    Thanks to the boundedness of the domain $B(c_0, 2r_\cut)$, we can synthesize the conditions on the initial data to a single inequality. 
    It simply requires $\lambda$ to be large enough.
	
    Note that the limit point $\tilde x^\mu$ need not be the global optimizer $x^\ast$. 
    The deviation of $\tilde x^\mu$ from $x^\ast$ can be controlled by choosing some large $\alpha$, as shown in Theorem 4.2, \cite{CarriloChoiTotzeckTse2018}.
\end{remark}

Recall that the proximity is measured in the form $\Phi(\nu^N_t) - \Phi(\bar \nu_t)$. 
Now we state the following result on the weak propagation of chaos. 
\begin{theorem}[Main Theorem]
\label{t:main}
    Let $(\nu^N_t)_{t \ge 0}$ be the empirical distribution of the particle system~\eqref{e:CBO-particles-cutoff}, and let $(\bar \nu_t)_{t \ge 0}$ be the weak solution to the mean-field system~\eqref{e:CBO-mean-field} with $\bar \nu_0 = \nu_{\init}$.
    Suppose $\Phi: \prob(\R^d) \to \R$ satisfies the following conditions:
    \begin{enumerate}
        \item there exists some constant $C_\Phi$ such that 
        \begin{equation}
        \label{e:phi-reg-6}
            \sup_{\mu\in \prob(\R^d)} \norm{\frac{\delta \Phi}{\delta m}(\mu)}_{6,\infty} < C_\Phi \,, \qquad \sup_{\mu\in \prob(\R^d)} \norm{\frac{\delta^2 \Phi}{\delta m^2}(\mu)}_{6,\infty} < C_\Phi \,,
        \end{equation}
        \item and 
        \begin{equation}
        \label{e:phi-translate-inv}
            \left. \frac{d}{dz} \right\vert_{z=0} \Phi(\mu \circ \tau_z^{-1}) = 0 \,,
        \end{equation}
        where $\tau_z: x \mapsto x+z$ is the translation operator.
    \end{enumerate}
    Also assume that $\supp(\nu_\init) \subset B(c_0,r_\cut)$. 
    Then for all $N \ge 2$ we have 
    \begin{equation*}
        \sup_{t \ge 0} \abs{ \E[\Phi(\nu^N_t)] - \Phi(\bar \nu_t) } \le \frac{C_\main}{N} \,,
    \end{equation*}
    where $C_{\main}$ is a constant independent of $\nu_\init$.
\end{theorem}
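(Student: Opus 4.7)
My overall plan follows the master-equation framework of~\cite{DelarueTse2021} advertised by the authors. First I would construct the value function $\ufn(t,\mu) \defeq \Phi(\bar\nu^{\mu}_t)$, where $(\bar\nu^{\mu}_s)_{s\ge 0} \in \c_\cbo$ denotes the unique mean-field flow from Proposition~\ref{pp:CBO-mf-decay} started at $\mu \in \prob(B(c_0,2r_\cut))$. A standard differentiation along the measure flow shows that $\ufn$ solves a master equation with initial datum $\ufn(0,\cdot)=\Phi(\cdot)$, whose generator is the mean-field Kolmogorov operator acting in the $\mu$ variable. Since $\ufn(t,\nu_\init)=\Phi(\bar\nu_t)$ and $\ufn(0,\nu^N_t)=\Phi(\nu^N_t)$, the weak error telescopes as $\E[\ufn(0,\nu^N_t)] - \ufn(t,\nu_\init)$, which I would evaluate by an It\^o-type formula applied to $s\mapsto \ufn(t-s,\nu^N_s)$.

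The second step is this It\^o decomposition on $\prob(B(c_0,2r_\cut))$, in the spirit of~\cite{ChassagneuxSzpruchTse2022,Tse2021}. The generator of the $N$-particle system applied to a smooth function of the empirical measure differs from $N$ copies of the mean-field generator exactly through $1/N$ correction terms carried by the second linear functional derivative. After cancelling $\p_t\ufn$ via the master equation, one is left, up to smoothness remainders controlled by~\eqref{e:phi-reg-6}, with an identity of the schematic form
\begin{equation*}
\E[\Phi(\nu^N_t)] - \Phi(\bar\nu_t) = \frac{1}{2N}\int_0^t \E \int \tr\bigl(a(y,\nu^N_s)\, \p_{y_1}\p_{y_2} \tfrac{\delta^2 \ufn}{\delta m^2}(t-s,\nu^N_s,y,y)\bigr)\, \nu^N_s(dy)\, ds.
\end{equation*}
Because $a$ is bounded on the cutoff ball and $\nu^N_s$ is a probability measure, the uniform-in-time estimate reduces to a time-integrable decay of the second functional derivative~\eqref{e:2nd-deri-u}.

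The core estimate I would therefore establish is
\begin{equation*}
\sup_{\mu,y_1,y_2}\ \abs{\p_{y_1}\p_{y_2}\tfrac{\delta^2 \ufn}{\delta m^2}(\tau,\mu,y_1,y_2)} \le C e^{-\kappa_0 \tau},\qquad \tau \ge 0,
\end{equation*}
uniform in $\mu \in \prob(B(c_0,2r_\cut))$ and $y_1,y_2 \in B(c_0,2r_\cut)$, for some $\kappa_0>0$. Granted this decay, the integrand in the previous display is $O(e^{-\kappa_0(t-s)})$ and the time integral is bounded by $C/\kappa_0$ uniformly in $t$, delivering the announced $C_\main/N$ rate.

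The hard part is proving this decay. In the adjoint picture, testing $\p_{y_1}\p_{y_2}\tfrac{\delta^2\ufn}{\delta m^2}$ against a probability measure amounts to analyzing the linearized Fokker--Planck equation $\p_\tau q_\tau = (\lin^\ast + \a^\ast)q_\tau + r_\tau$ around the Dirac attractor $\delta_{\tilde x^\mu}$ identified in Proposition~\ref{pp:CBO-mf-decay}. As the authors emphasize, the CBO volatility degenerates at the consensus point, so the backward dual equation for $w$ does not itself contract; however, its spatial gradient $\grad_x w$ satisfies a perturbed equation carrying an extra $-\lambda\, \grad_x w$ term, which via a Feynman--Kac representation yields an exponential discount at rate $\lambda$. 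A careful analysis of the associated geometric-Brownian-motion-type dynamics, kept tractable by the compact support enforced by the cutoff $\phi$, should give $\abs{\grad_x w(t) - \varphi^{\rem}\cdot \grad_x w(T,\tilde x)} \le C e^{-\lambda(T-t)}$. The translation-invariance hypothesis~\eqref{e:phi-translate-inv} is then exactly what annihilates the residual constant mode $q_\infty\cdot \grad\delta_{\tilde x}$, leaving only the exponentially decaying part. Assembling these ingredients in the style of Section~4 of~\cite{DelarueTse2021} yields the required estimate on~\eqref{e:2nd-deri-u}, after which the first three steps combine to prove the theorem.
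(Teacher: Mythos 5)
Your overall framework --- the value function $\ufn(t,\mu)=\Phi(P_t\mu)$ solving a master equation, an It\^o-type decomposition in the empirical measure, reduction to decay of $\p_{y_1}\p_{y_2}\frac{\delta^2\ufn}{\delta m^2}$, and the role of the translation-invariance hypothesis~\eqref{e:phi-translate-inv} in annihilating the residual mode $q_\infty\cdot\grad\delta_{\tilde x}$ --- does match the paper's strategy, and your discussion of the degenerate backward dual equation and the exponential discount acquired by its spatial gradient is the right mechanism.

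There is, however, a genuine gap in the telescoping step. Applying an It\^o-type formula to $s\mapsto\ufn(t-s,\nu^N_s)$ over $[0,t]$ and taking expectations produces $\E[\ufn(0,\nu^N_t)]-\E[\ufn(t,\nu^N_0)]$, not $\E[\ufn(0,\nu^N_t)]-\ufn(t,\nu_\init)$. Because $\nu^N_0$ is the empirical measure of $N$ i.i.d.\ draws from $\nu_\init$, it is random and differs from $\nu_\init$; the initial-fluctuation term
\begin{equation*}
\E\bigl[\ufn(t,\nu^N_0)\bigr]-\ufn(t,\nu_\init)
\end{equation*}
is itself of order $O(N^{-1})$ and is not a ``smoothness remainder'' absorbed by~\eqref{e:phi-reg-6}. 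The paper treats it as a separate static piece (Eq.~\eqref{e:static-bounded}, via Theorem~2.14 of~\cite{ChassagneuxSzpruchTse2022}), and controlling it requires a \emph{uniform-in-time bound} on $\frac{\delta^2\ufn}{\delta m^2}(t,\cdot,\cdot,\cdot)$ itself, not the exponential decay of its mixed $y$-derivatives. Your proposal only establishes the latter, which is exactly what bounds the dynamic integral but gives nothing at $\tau=t$ for the static term. To close the argument, retain your decay estimate for $\p_{y_1}\p_{y_2}\frac{\delta^2\ufn}{\delta m^2}(\tau,\cdot,\cdot,\cdot)$, add a separate uniform-in-$t$ bound on $\frac{\delta^2\ufn}{\delta m^2}$ (in the paper this follows from Lemmata~\ref{l:m1-bound} and~\ref{l:m2-bound}, combined with the same translation-invariance cancellation), and split the weak error into the static initial-fluctuation term and the dynamic time-integral before estimating each.
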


As a consequence of Theorem~\ref{t:main}, we obtain some estimates on the weak errors between $\nu^N_t$ and $\delta_{x^\ast}$, in the Wasserstein-type metrics, with convergence rates in $N$ and $t$ separately. 
Here we present two examples as corollaries of the main theorem.
The proofs will be given in Section~\ref{s:proof-main}.

\begin{corollary}[Convergence in centered Fourier-Wasserstein distance]
\label{co:conv-in-t-N}
    Let $s > \frac{d+7}{2}$ and $\id: \R^d \to \R^d$ the identity map such that $\id(x) = x$ for all $x \in \R^d$.
    There exists some constant $C_{FW} > 0$ such that 
    \begin{equation*}
        \E \norm{\nu^N_t \circ \tau_{\ip{\id, \nu^N_t}} - \delta_0}_{-s,2}^2 \le C_{FW} (N^{-1} + e^{-2\kappa t} )
    \end{equation*}
    for all $N \ge 2$ and $t \ge 0$.
\end{corollary}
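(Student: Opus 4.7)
The plan is to invoke Theorem~\ref{t:main} with the test functional
\begin{equation*}
\Phi_{FW}(\mu) \defeq \norm{\mu \circ \tau_{\ip{\id, \mu}} - \delta_0}_{-s,2}^2
\end{equation*}
and then control $\Phi_{FW}(\bar\nu_t)$ separately using the contraction of the mean-field limit. In Fourier space the functional reads
\begin{equation*}
\Phi_{FW}(\mu) = \int_{\R^d}(1+\abs{\xi}^2)^{-s}\abs{g_\mu(\xi)-1}^2 d\xi, \qquad g_\mu(\xi)\defeq \hat\mu(\xi) e^{i2\pi \xi^\top \ip{\id,\mu}},
\end{equation*}
so that $g_\mu$ is the Fourier transform of $\mu$ after translation to zero mean.

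The first task is to verify the two hypotheses of Theorem~\ref{t:main}. Condition~\eqref{e:phi-translate-inv} is immediate: translating $\mu$ by $z$ shifts both $\mu$ and its mean by $z$, so $g_{\mu\circ\tau_z^{-1}}\equiv g_\mu$ and hence $\Phi_{FW}(\mu\circ\tau_z^{-1}) = \Phi_{FW}(\mu)$. For the Sobolev bounds~\eqref{e:phi-reg-6}, I would differentiate $g_\mu$ by substituting $\mu + \ep(\delta_y - \mu)$; both $\delta g_\mu/\delta m(\mu,y)$ and $\delta^2 g_\mu/\delta m^2(\mu,y_1,y_2)$ reduce to explicit combinations of the factors $e^{-i2\pi\xi^\top(y_j-\ip{\id,\mu})}$ and $g_\mu$ with polynomial-in-$\xi^\top(y_j-\ip{\id,\mu})$ coefficients. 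All relevant supports lie in the compact ball $B(c_0, 2r_\cut)$ by Proposition~\ref{pp:CBO-restrict}, so each differentiation in the $y$-variables contributes at most one extra factor of $\xi$. A careful count shows that six derivatives in the $(y_1,y_2)$-variables produce an integrand bounded by $(1+\abs{\xi}^2)^{-s}(1+\abs{\xi})^{7}$, which integrates to a finite constant precisely when $s > (d+7)/2$.

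Theorem~\ref{t:main} then gives $\E[\Phi_{FW}(\nu^N_t)] \le \Phi_{FW}(\bar\nu_t) + C_\main/N$ uniformly in $t$, so it remains to bound $\Phi_{FW}(\bar\nu_t)$ by $Ce^{-2\kappa t}$. The essential improvement over Step~2 is that for $\bar\nu_t$ the recentered measure has vanishing first moment, and the elementary inequality $\abs{e^{ia}-1-ia}\le a^2/2$ then upgrades the bound on $\abs{g-1}$ to the quadratic estimate
\begin{equation*}
\abs{g_{\bar\nu_t}(\xi)-1} = \abs{\int\bigl(e^{-i2\pi\xi^\top(x-\ip{\id,\bar\nu_t})}-1+i2\pi\xi^\top(x-\ip{\id,\bar\nu_t})\bigr)\bar\nu_t(dx)} \le 2\pi^2\abs{\xi}^2 V(\bar\nu_t),
\end{equation*}
where $V(\bar\nu_t)\defeq\int\abs{x-\ip{\id,\bar\nu_t}}^2\bar\nu_t(dx)$. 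Squaring and integrating against the weight yields $\Phi_{FW}(\bar\nu_t)\le C V(\bar\nu_t)^2$ (integrability now needs only the milder condition $s>(d+4)/2$, well within the assumption). The decay $V(\bar\nu_t)\le Ce^{-\kappa t}$ is the standard variance-contraction complement of Proposition~\ref{pp:CBO-mf-decay}: differentiating $V$ along the Fokker-Planck equation~\eqref{e:CBO-fokker-planck}, using $\phi\le 1$ and the Cauchy-Schwarz bound $\abs{M(\bar\nu_t)-\ip{\id,\bar\nu_t}}^2\le CV(\bar\nu_t)$ on the weighted measure $e^{-\alpha\efn}\bar\nu_t$, produces a differential inequality $\dot V\le -\kappa V$ under the same condition on $\lambda$ as in the proposition. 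Combining these gives $\E[\Phi_{FW}(\nu^N_t)] \le C_{FW}(N^{-1} + e^{-2\kappa t})$.

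The main obstacle is the Sobolev regularity check: one must count carefully to rule out spurious factors of $\xi$ in the mixed $y_1,y_2$-derivatives of $\delta^2 g_\mu/\delta m^2$, so that the integrability threshold lands at exactly $s>(d+7)/2$ rather than a larger value. The key combinatorial saving is that in the explicit expressions for the functional derivatives only the exponential factor $e^{-i2\pi\xi^\top(y_j-\ip{\id,\mu})}$ carries nontrivial $y_j$-dependence beyond first order; the polynomial-in-$y$ factors either vanish after one $y$-differentiation or remain bounded on $B(c_0, 2r_\cut)$.
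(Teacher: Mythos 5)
Your proof is correct and shares the overall strategy with the paper: verify the two hypotheses of Theorem~\ref{t:main} for the functional $\Phi_{FW}$, invoke the theorem, and then bound the mean-field term $\Phi_{FW}(\bar\nu_t)$ separately. The verification of translation invariance and of the Sobolev bound (six $y$-derivatives leading to the threshold $s>(d+7)/2$) agrees with the paper's.

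Where you diverge is in the estimate of $\Phi_{FW}(\bar\nu_t)$. The paper introduces the limit point $\tilde x^{\nu_\init}$ and uses the triangular split
\begin{equation*}
\norm{\bar\nu_t \circ \tau_{\ip{\id,\bar\nu_t}} - \delta_0}_{-s,2} \le \norm{\bar\nu_t - \delta_{\tilde x^{\nu_\init}}}_{-s,2} + \norm{\delta_{\ip{\id,\bar\nu_t}} - \delta_{\tilde x^{\nu_\init}}}_{-s,2}\,,
\end{equation*}
then invokes Proposition~\ref{pp:CBO-mf-decay} (and, implicitly, the variance contraction of CCTT2018 Theorem 4.1) to get $Ce^{-\kappa t}$ for each term. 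Your argument instead uses the exact vanishing of the first moment of the recentered measure, so the Taylor remainder inequality $\abs{e^{ia}-1-ia}\le a^2/2$ directly yields $\abs{g_{\bar\nu_t}(\xi)-1}\le 2\pi^2\abs{\xi}^2 V(\bar\nu_t)$ and hence $\Phi_{FW}(\bar\nu_t)\le CV(\bar\nu_t)^2$. This sidesteps the limit point $\tilde x^{\nu_\init}$ entirely and makes the $e^{-2\kappa t}$ rate transparent. It also quietly patches a gap the paper glosses over: a naive first-order estimate on the term $\norm{\bar\nu_t - \delta_{\tilde x^{\nu_\init}}}_{-s,2}$ via the absolute first moment $\int\abs{x-\tilde x^{\nu_\init}}\bar\nu_t(dx)$ would only yield $e^{-\kappa t/2}$; to reach $e^{-\kappa t}$ one needs precisely the second-order expansion with cancellation of the centered linear term, which is what your computation performs explicitly. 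Both routes ultimately depend on the same variance contraction, so the difference is presentational, but yours is cleaner and more self-contained at this step.
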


\begin{corollary}[Convergence in centered Wasserstein distance]
\label{co:conv-w2}
    Define the Wasserstein 2-distance $\was_2: \prob_2(\R^d) \times \prob_2(\R^d) \to \bar [0, \infty)$ by 
    \begin{equation*}
        \was_2(\mu, \nu)^2 \defeq \inf_{\pi \in \Pi(\mu,\nu)} \iint \abs{x-y} \pi(dx,dy) \,,
    \end{equation*}
    where $\Pi(\mu,\nu)$ is the set of all couplings of $\mu$ and $\nu$. 
    The identity map $\id: \R^d \to \R^d$ is defined as in Corollary~\ref{co:conv-in-t-N}.
    Then there exists some constant $C_{W} > 0$ such that 
    \begin{equation*}
        \E \was_2(\nu^N_t \circ \tau_{\ip{\id, \nu^N_t}}, \delta_0)^2 \le C_W(N^{-1} + e^{-\kappa t}) 
    \end{equation*}
    for all $N \ge 2$ and $t \ge 0$.
\end{corollary}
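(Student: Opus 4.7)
The plan is to invoke Theorem~\ref{t:main} with a carefully chosen test functional $\Phi$ that coincides with the centred second moment on compactly supported measures, and then to verify separately that the corresponding quantity for $\bar\nu_t$ decays exponentially. The reduction starts from the identity $\was_2(\mu, \delta_0)^2 = \int \abs{x}^2 \mu(dx)$, which holds because the only coupling of any $\mu$ with a Dirac mass is the product measure. A change of variable then gives
\begin{equation*}
    \was_2(\nu^N_t \circ \tau_{\ip{\id, \nu^N_t}}, \delta_0)^2 = \int \abs{y - \ip{\id, \nu^N_t}}^2 \nu^N_t(dy),
\end{equation*}
so it suffices to bound the expected empirical variance on the right-hand side.

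The naive candidate $\Phi(\mu) = \int \abs{y - \ip{\id, \mu}}^2 \mu(dy)$ fails condition~\eqref{e:phi-reg-6}: its linear derivative is $\abs{y}^2 - 2\ip{\id, \mu}\cdot y$ (up to normalization), whose $W^{6,\infty}$-norm blows up in $\abs{\ip{\id, \mu}}$ as $\mu$ ranges over $\prob(\R^d)$. I will instead use a \emph{symmetric} cutoff: fix a compactly supported $F \in C^\infty(\R^d)$ with $F(x) = \abs{x}^2$ on $B(0, 4r_\cut)$ and bounded derivatives up to order six, and define
\begin{equation*}
    \Phi(\mu) \defeq \frac{1}{2} \iint F(y_1 - y_2) \mu(dy_1) \mu(dy_2).
\end{equation*}
Because $F$ depends only on the difference $y_1 - y_2$, the functional $\Phi$ is translation invariant, and~\eqref{e:phi-translate-inv} is automatic. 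Moreover
\begin{equation*}
    \frac{\delta \Phi}{\delta m}(\mu, y) = \int F(y - y') \mu(dy') + \text{const}, \qquad \frac{\delta^2 \Phi}{\delta m^2}(\mu, y_1, y_2) = F(y_1 - y_2) + \text{const},
\end{equation*}
so both derivatives have $W^{6, \infty}$-norm bounded uniformly in $\mu$ by $\norm{F}_{6, \infty}$, giving~\eqref{e:phi-reg-6}. Since $\nu^N_t$ and $\bar\nu_t$ are supported in $B(c_0, 2r_\cut)$ by Proposition~\ref{pp:CBO-restrict}, the displacement $y_1 - y_2$ lies in $B(0, 4r_\cut)$ on these measures, on which $F$ equals $\abs{\cdot}^2$; hence $\Phi$ coincides with the variance. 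Theorem~\ref{t:main} then yields $\bigl|\E[\Phi(\nu^N_t)] - \Phi(\bar\nu_t)\bigr| \le C_\main/N$.

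It remains to bound $V(t) \defeq \Phi(\bar \nu_t) = \E\abs{\bar X_t}^2 - \abs{\E \bar X_t}^2$. Applying It\^o's formula to $\abs{\bar X_t}^2$ and combining with the ODE $\tfrac{d}{dt}\E \bar X_t = -\lambda \bigl(\E \bar X_t - M(\bar \nu_t)\bigr)$ satisfied by the mean of~\eqref{e:CBO-mean-field} yields
\begin{equation*}
    \frac{d}{dt} V(t) = -2\lambda V(t) + d \sigma^2 \E\bigl[\phi(\bar X_t)^2 \abs{\bar X_t - M(\bar \nu_t)}^2\bigr].
\end{equation*}
Using $\phi \le 1$, expanding $\abs{\bar X_t - M(\bar \nu_t)}^2 = \abs{\bar X_t - \E \bar X_t}^2 + \abs{\E \bar X_t - M(\bar \nu_t)}^2$ (the cross term vanishes in expectation), and invoking $\abs{\E \bar X_t - M(\bar \nu_t)} \le 2 C e^{-\kappa t}$ from Proposition~\ref{pp:CBO-mf-decay} produces $V'(t) \le -(2\lambda - d \sigma^2) V(t) + C' e^{-2\kappa t}$. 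The standing hypothesis on $\lambda$ in Proposition~\ref{pp:CBO-mf-decay} ensures the linear coefficient is at least $\kappa$, so Gronwall's inequality gives $V(t) \le C e^{-\kappa t}$. Combining this with the preceding display proves the corollary.

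The principal obstacle is the mismatch between the natural variance functional and the global-in-$\mu$ $W^{6, \infty}$-regularity required by Theorem~\ref{t:main}, which is what forces the symmetric-cutoff construction of $\Phi$. Everything that follows is a routine It\^o computation on the mean-field SDE, fueled by the exponential estimates already supplied by Proposition~\ref{pp:CBO-mf-decay}.
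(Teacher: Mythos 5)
Your proof is correct in its core ideas but takes a genuinely different route from the paper in two respects, and one step needs an extra line of verification.

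On the choice of test functional: the paper takes $\Phi(\mu) = \was_2(\mu\circ\tau_{\ip{\id,\mu}},\delta_0)^2 = \int\abs{x-\ip{\id,\mu}}^2\mu(dx)$ directly, computes $\frac{\delta\Phi}{\delta m}(\mu,x)=\abs{x-\ip{\id,\mu}}^2$ and $\frac{\delta^2\Phi}{\delta m^2}(\mu,x,y)=-2(x-\ip{\id,\mu})\cdot y$, and verifies~\eqref{e:phi-reg-6} by restricting the domain of $\Phi$ to $\prob(B(c_0,2r_\cut))$, on which $\ip{\id,\mu}$ stays bounded. Your observation that the naive variance functional fails~\eqref{e:phi-reg-6} \emph{when $\mu$ ranges over all of $\prob(\R^d)$} is correct, and your symmetrized cutoff $\Phi(\mu)=\tfrac12\iint F(y_1-y_2)\,\mu(dy_1)\mu(dy_2)$ (with $\frac{\delta^2\Phi}{\delta m^2}=F(y_1-y_2)$ manifestly bounded independently of $\mu$) is a cleaner match to the hypothesis of Theorem~\ref{t:main} as literally stated. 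Both versions coincide with the empirical variance on measures supported in $B(c_0,2r_\cut)$, so they produce the same estimate. On the decay of $\Phi(\bar\nu_t)$: the paper simply cites Theorem~4.1 of~\cite{CarriloChoiTotzeckTse2018}, which gives $\was_2(\bar\nu_t\circ\tau_{\ip{\id,\bar\nu_t}},\delta_0)^2 \le e^{-\kappa t}\was_2(\nu_\init\circ\tau_{\ip{\id,\nu_\init}},\delta_0)^2$; you reprove the decay via It\^o and Gronwall, which is self-contained and makes only black-box use of Proposition~\ref{pp:CBO-mf-decay}.

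The one step you should not leave implicit is the claim that the hypothesis on $\lambda$ in Proposition~\ref{pp:CBO-mf-decay} forces $2\lambda-d\sigma^2\ge\kappa$. Unpacking the definitions, $2\lambda-d\sigma^2\ge\kappa$ is equivalent to $e^{9\alpha c_\efn r_\cut^2-\alpha\underline{\efn}}\ge\tfrac12$, which holds when $\underline{\efn}$ is small (in particular if the objective is normalized so that $\underline{\efn}=0$) but is not a direct consequence of the stated lower bound $\lambda\ge d\sigma^2 e^{18 c_\efn\alpha r_\cut^2-\alpha\underline{\efn}}+4r_\cut^2 e^{9 c_\efn\alpha r_\cut^2}$. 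Without that inequality your Gronwall argument gives decay at rate $\min\{2\lambda-d\sigma^2,\,2\kappa\}$, which is strictly positive under the hypothesis but may be smaller than $\kappa$, so the conclusion would hold with a different exponent than the one advertised in the corollary. The exact rate $\kappa$ is what the cited Carrillo--Choi--Totzeck--Tse estimate delivers: it controls $\E[\phi(\bar X_t)^2\abs{\bar X_t-M(\bar\nu_t)}^2]$ by an explicit multiple $2e^{9\alpha c_\efn r_\cut^2-\alpha\underline{\efn}}$ of $V(t)$, using the $e^{-\alpha\efn}$ weighting in $M$ rather than splitting off the drift term $\abs{\E\bar X_t-M(\bar\nu_t)}^2$ as you do. Either add that inequality as an explicit standing assumption, or cite the sharper variance inequality as the paper does.
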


\begin{remark}
    We make a few comments on the above corollaries.
    \begin{enumerate}
        \item The upper bounds tend to 0 as $N, t \to \infty$ in both corollaries. 
        This means the particles cluster to the global optimizer when there are a large number of them and the algorithm runs for a sufficiently long time.

        \item The Wasserstein distance is in general non-smooth on the space $\prob_2(\R^d)$. 
        But since our target is the convergence towards $\delta_0$, the unique centered attracting invariant measure, there is a clear explicit formula for the centered Wasserstein distance.
        Its regularity still admits the conditions of our main theorem, which gives Corollary~\ref{co:conv-w2}, a stronger argument than Corollary~\ref{co:conv-in-t-N}.
        
        \item Notice that the errors due to the finiteness of the particle numbers $N$ and the running time $t$ appear separately. 
        The choice of $N$ can be independent of $t$ when given an error tolerance in practice, which improves the allocation of computational resources.
    \end{enumerate}
\end{remark}

\subsection{Strategies of the proof}
\label{s:strategy}

Observe that the Fokker-Planck equation actually defines a solution operator $P_t$ that pushes $\nu_\init$ to $\bar \nu_t$ for all $t \ge 0$. 
In general, this sequence of operators map any $\mu \in \prob(\R^d)$ to the corresponding probability flow $(\mu_t)_{t \ge 0}$ that solves~\eqref{e:CBO-fokker-planck}.
Thus, for any function $\Phi: \prob(\R^d) \to \R$, we may define $\ufn: [0,\infty) \times \prob(\R^d) \to \R$ via 
\begin{equation*}
	\ufn(t, \mu) = \Phi(P_t \mu) \,,
\end{equation*}
and we often abbreviate $P_t \mu = \mu_t$ and sometimes abuse the notation $\mu$ for $(\mu_t)_{t \ge 0}$. 
Then the proximity between $\nu^N_t$ and $\bar \nu_t$ translates to
\begin{equation*}
	\E[\Phi(\nu^N_t)] - \Phi(\bar \nu_t) = \E [\ufn(0, \nu^N_t)] - \ufn(t, \nu_\init)  \,.
\end{equation*}
We shall analyze the behavior of the function $\ufn$ to obtain an estimate on the above quantity.

\subsubsection{Decomposition via the master equation}

We may decompose the objective by 
\begin{equation}
\label{e:u-decompose}
	\ufn(0, \nu^N_t) - \ufn(t, \nu_\init) = (\cU(t,\nu^N_0) - \cU(t,\nu_\init)) + (\cU(0,\nu^N_t) - \cU(t,\nu^N_0)) \,.
\end{equation}
Notice that the first component in~\eqref{e:u-decompose} is static in time. 
We apply Theorem 2.14 of~\cite{ChassagneuxSzpruchTse2022} to see that 
\begin{align}
	\nonumber
    & \E [\cU(t,\nu^N_0) - \cU(t,\nu_\init)] = \\ 
	\label{e:static-bounded}
	& \qquad \frac{1}{N} \int_0^1 \int_0^1 \E \left[ s_1 \frac{\delta^2 \cU}{\delta m^2}(t,\tilde \mu^N_{s_1,s_2}, \tilde \eta, \tilde \eta) - s_1 \frac{\delta^2 \cU}{\delta m^2}(t, \tilde \mu^N_{s_1,s_2}, \tilde \eta, X^{N,1}_0) \right] ds_1 ds_2 \,,
\end{align}
with $\tilde \eta \sim \nu_\init$ independent of $(X^{N,j}_0)_{j=1}^N$ and 
\begin{equation*}
    \tilde \mu^N_{s_1,s_2} \defeq (1-s_1) \nu_{\init} + s_1 \left( \nu^N_0 + \frac{s_2}{N} (\delta_{\tilde\eta} - \delta_{X^{N,1}_0}) \right) \,.
\end{equation*}

For the second component in~\eqref{e:u-decompose}, we notice that the function $\ufn: [0,\infty) \times \prob(\R^d)$ as previously defined solves the \emph{master equation}~\cite{BuckdahnLiPengRainer2017,CarmonaDelarue2018II} 
\begin{equation}
	\label{e:master}
	\p_t \ufn(t,\mu) = \int_{\R^d} \paren[\Big]{ {b(x,\mu)}^\top \grad_x \frac{\delta \ufn}{\delta m}(t,\mu,x) + \frac{1}{2} \tr[ a(x,\mu) \grad^2_x \frac{\delta \ufn}{\delta m}(t,\mu,x)  ]  } \mu(dx) \,.
\end{equation}
Then, by It\^{o}'s formula and the master equation, we get 
\begin{align}
    \nonumber
    & \E [\cU(0,\nu^N_t) - \cU(t,\nu^N_0)] = \\
    \label{e:need-exp-decay}
    & \qquad \frac{1}{2N} \sum_{i=1}^d \int_0^t \E \left[ \int_{\R^d}  a(z, \nu^N_s) \p_{(x_2)_i} \p_{(x_1)_i} \frac{\delta^2 \cU}{\delta m^2} (t-s,\nu^N_s,z,z) \nu^N_s(dz)  \right] ds \,.
\end{align}

Our target is to find a uniform-in-time upper bound on $\frac{\delta^2 \ufn}{\delta m^2}(t,\cdot,\cdot,\cdot)$ and an exponential decay in time of $\p_{(x_2)_i} \p_{(x_1)_i} \frac{\delta^2 \ufn}{\delta m^2}(t,\cdot,\cdot,\cdot)$.
Those will put uniform-in-time upper bounds on~\eqref{e:static-bounded} and~\eqref{e:need-exp-decay}, which finally leads to the conclusion. 
To analyze the two objects, we express them as solutions to some linear differential equations and study their evolution in time.

\subsubsection{Linearized Fokker-Planck equation}

Recall the Fokker-Planck equation 
\begin{equation*}
	\pa_t \mu_t = \lin_{\mu_t}^\ast \mu_t \,,
\end{equation*}
where 
\begin{equation*}
    \lin_{\mu} f(x) \defeq \frac{\sigma^2}{2} \abs{x - M(\mu)}^2 \phi(x)^2 \lap f(x) - \lambda (x-M(\mu))^\top \grad f(x)  \,.
\end{equation*}
Note that $\lin_{\mu_t}^\ast \mu_t$ depends nonlinearly on $\mu_t$. 

As in~\cite{Tse2021}, we consider a linearization of the Fokker-Planck equation in the form
\begin{align*}
	q \mapsto & \frac{1}{2} \int \tr (a(x,\mu) \grad^2 f(x) ) q(dx) + \int b(x,\mu)^\top \grad f(x) q(dx) \\
	& + \frac{1}{2} \int \tr \left( \frac{\delta a}{\delta m}(x,\mu) (q) \grad^2 f(x) \right) \mu(dx) + \int {\frac{\delta b}{\delta m} (x,\mu)(q)}\cdot \grad f(x)  \mu(dx) \\
	& + \text{(remainder)} \,,
\end{align*}
where 
\begin{equation*}
	\frac{\delta b}{\delta m} (x,\mu)(q) = \int \frac{\delta b}{\delta m} (x,\mu,y) q(dy) \,, \qquad \frac{\delta a}{\delta m}(x,\mu) (q) = \int \frac{\delta a}{\delta m}(x,\mu,y) q(dy) \,, 
\end{equation*}
with 
\begin{align*}
	\frac{\delta b}{\delta m} (x,\mu,y) & = \frac{\lambda (y-M(\mu)) e^{-\alpha \efn(y)}}{\ip{e^{-\alpha\efn},\mu}} \,, \\
	\frac{\delta a}{\delta m}(x,\mu,y) & = - \frac{2 \sigma^2 \phi(x)^2 e^{-\alpha \cE(y)} (x-M(\mu))^\top (y-M(\mu))}{\ip{e^{-\alpha \cE}, \mu}} I_{d \times d} \,.
\end{align*}
So we define the linearization operator
\begin{equation}
\label{e:operator-a}
    \a_{\mu} f(x) \defeq \ip{\lambda \grad f - \sigma^2 \phi^2(\cdot)(\cdot - M(\mu)) \lap f, \mu} \cdot (x-M(\mu)) \frac{e^{-\alpha\efn(x)}}{\ip{e^{-\alpha\efn}, \mu}} \,.
\end{equation}
Then, given the solution $(\mu_t)_{t \ge 0}$ to~\eqref{e:CBO-fokker-planck}, an initial state $q_0 \in (W^{n,\infty})'$, and the remainder terms $(r_t)_{t \ge 0} \in \bigcap_{T>0} L^\infty([0,T]; (W^{n,\infty})')$, 
we define the \emph{linearized Fokker-Planck equation} with $\mu = (\mu_t)_{t \ge 0} \in \c_\cbo$ by 
\begin{equation}
\label{e:l-fpe}
\tag{L-FPE}
    \p_t q_t = \lin_{\mu_t}^\ast q_t + \a_{\mu_t}^\ast q_t + r_t \,, 
\end{equation}
where the solution $(q_t)_{t \ge 0}$ is denoted by $\cauchy{\mu}{q_0}{r} \in \bigcap_{T>0} L^\infty([0,T]; (W^{n,\infty})')$.

Then we make use of Propositions 3.3 and 3.4 of~\cite{DelarueTse2021} to express the derivatives of $\ufn$ in terms of the solutions to some particular L-FPEs. 
\begin{lemma}
	\label{l:decomp-into-m}
    Suppose $\Phi$ satisfies the condition of Theorem~\ref{t:main}. 
    Then, for any $\mu \in \c_\cbo$, $z_1, z_2 \in \R^d$, we have 
    \begin{equation*}
	\frac{\delta^2 \cU}{\delta m^2} (t,\mu_0,z_1,z_2) = \frac{\delta^2 \Phi}{\delta m^2} (\mu_t) (m^{(1)}(t;\mu,\delta_{z_1}), m^{(1)}(t;\mu,\delta_{z_2})) + \frac{\delta \Phi}{\delta m} (\mu_t) (m^{(2)}(t;\mu,\delta_{z_1}, \delta_{z_2})) \,,
    \end{equation*}
	where 
	\begin{align*}
		m^{(1)}(\cdot;\mu,\nu) & := \cauchy{\mu}{\nu-\mu_0}{0} \,, \\
		m^{(2)}(\cdot;\mu,\nu_1,\nu_2) & := \cauchy{\mu}{\mu_0-\nu_2}{\src{\mu}{m^{(1)}(\cdot;\mu,\nu_1)}{m^{(1)}(\cdot;\mu,\nu_2)} } \,,
	\end{align*}
	with 
	\begin{align*}
		\int f(x) \src{\mu}{q_1}{q_2}(t) (dx) & = \int \grad f(x) \cdot \frac{\delta b}{\delta m}(x,\mu_t)(q_2(t)) q_1(t)(dx) \\
		& + \int \grad f(x) \cdot \frac{\delta b}{\delta m}(x,\mu_t)(q_1(t)) q_2(t)(dx) \\
		& + \int \grad f(x) \cdot \frac{\delta^2 b}{\delta m^2}(x,\mu_t)(q_1(t),q_2(t)) \mu_t(dx) \\
		& + \frac{1}{2} \int \tr[ \grad^2 f(x) \frac{\delta a}{\delta m}(x,\mu_t)(q_2(t)) ] q_1(t)(dx) \\
		& + \frac{1}{2} \int \tr[ \grad^2 f(x) \frac{\delta a}{\delta m}(x,\mu_t)(q_1(t)) ] q_2(t)(dx) \\
		& + \frac{1}{2} \int \tr[ \grad^2 f(x) \frac{\delta^2 a}{\delta m^2}(x,\mu_t)(q_1(t),q_2(t)) ] \mu_t(dx) \,.
	\end{align*}
\end{lemma}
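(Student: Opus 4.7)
The lemma is a direct adaptation of Propositions 3.3 and 3.4 of \cite{DelarueTse2021} to the CBO coefficients $b(x,\mu)=-\lambda(x-M(\mu))$ and $a(x,\mu)=\sigma^2\phi(x)^2|x-M(\mu)|^2 I$, so the plan is to carry out the chain-rule derivation of $\frac{\delta\cU}{\delta m}$ and $\frac{\delta^2\cU}{\delta m^2}$ along the flow $\mu_0\mapsto\mu_t=P_t\mu_0$ and identify each term with a solution of an L-FPE. Concretely, the three pieces I need are: (i) the Gâteaux-differentiability of the flow map in the dual Sobolev space $(W^{n,\infty}(B(c_0,2r_\cut)))'$; (ii) the chain rule for $\cU=\Phi\circ P_t$ (for which the $W^{6,\infty}$-regularity of $\frac{\delta\Phi}{\delta m}$ and $\frac{\delta^2\Phi}{\delta m^2}$ in~\eqref{e:phi-reg-6} is exactly what pairs against these dual-space measures); and (iii) explicit calculation of the source term $\mathcal{S}[\mu](q_1,q_2)$ by differentiating the FPE coefficients twice in $\mu$.

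\textbf{Step 1 (flow linearization).} Fix $\mu\in\c_\cbo$ and a direction $\nu\in\prob(B(c_0,2r_\cut))$. I would set $\mu_0^\ep=\mu_0+\ep(\nu-\mu_0)$ and let $\mu_t^\ep=P_t\mu_0^\ep$. Differentiating the weak form~\eqref{e:CBO-fokker-planck} formally at $\ep=0$, the perturbation $q_t:=\partial_\ep|_{\ep=0}\mu_t^\ep$ solves
\begin{equation*}
\p_t q_t=\lin^\ast_{\mu_t}q_t+\a^\ast_{\mu_t}q_t,\qquad q_0=\nu-\mu_0,
\end{equation*}
where $\a_{\mu_t}^\ast$ accounts for the dependence of $b,a$ on $\mu_t$ through $M(\mu_t)$ as computed in~\eqref{e:operator-a}. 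Taking $\nu=\delta_{z_1}$ identifies this with $m^{(1)}(\cdot;\mu,\delta_{z_1})=\cauchy{\mu}{\delta_{z_1}-\mu_0}{0}$. I would justify this rigorously by fixing a test function $f\in W^{n,\infty}$, differentiating the ODE for $\ep\mapsto\ip{f,\mu_t^\ep}$ in $\ep$, and using well-posedness of~\eqref{e:l-fpe} (a standard Banach fixed-point argument in the dual Sobolev norm once Lipschitz bounds on $\mu\mapsto(b(\cdot,\mu),a(\cdot,\mu))$ are established on $B(c_0,2r_\cut)$; these Lipschitz bounds follow from the explicit formulas for $\frac{\delta b}{\delta m}$, $\frac{\delta a}{\delta m}$ and the uniform lower bound on $\ip{e^{-\alpha\efn},\mu}$ over $\prob(B(c_0,2r_\cut))$).

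\textbf{Step 2 (first chain rule).} Given Step 1, the chain rule yields
\begin{equation*}
\frac{\delta\cU}{\delta m}(t,\mu_0,z_1)=\int_{\R^d}\frac{\delta\Phi}{\delta m}(\mu_t,x)\,m^{(1)}(t;\mu,\delta_{z_1})(dx),
\end{equation*}
up to the additive-constant convention, since $m^{(1)}(t;\mu,\delta_{z_1})$ has zero total mass (conservation in the L-FPE with no source). This is essentially Proposition 3.3 of \cite{DelarueTse2021}; the CBO-specific check is that $\frac{\delta\Phi}{\delta m}(\mu_t,\cdot)\in W^{n,\infty}$ lies in the domain where $m^{(1)}(t;\mu,\delta_{z_1})$ acts as a bounded linear functional.

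\textbf{Step 3 (second chain rule and the source term).} Now I differentiate the identity of Step 2 again, in a second direction $\delta_{z_2}-\mu_0$. Writing $\mu^\ep=(\mu_t^\ep)_{t\ge0}$ for the perturbed flow starting from $\mu_0+\ep(\delta_{z_2}-\mu_0)$, we have
\begin{equation*}
\frac{\delta^2\cU}{\delta m^2}(t,\mu_0,z_1,z_2)=\partial_\ep\big|_{\ep=0}\int\frac{\delta\Phi}{\delta m}(\mu_t^\ep,x)\,m^{(1)}(t;\mu^\ep,\delta_{z_1})(dx).
\end{equation*}
The derivative splits into two parts: differentiating $\frac{\delta\Phi}{\delta m}(\mu_t^\ep,\cdot)$ gives the $\frac{\delta^2\Phi}{\delta m^2}(\mu_t)(m^{(1)}(t;\mu,\delta_{z_1}),m^{(1)}(t;\mu,\delta_{z_2}))$ term, while differentiating $m^{(1)}(t;\mu^\ep,\delta_{z_1})$ gives a new L-FPE whose solution is $m^{(2)}(t;\mu,\delta_{z_1},\delta_{z_2})$. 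The $\mu^\ep$-dependence of $m^{(1)}(\cdot;\mu^\ep,\delta_{z_1})$ is of two types: its initial datum $\delta_{z_1}-\mu_0^\ep$ produces the initial condition $\mu_0-\delta_{z_2}$ after differentiation, and the operators $\lin^\ast_{\mu_t^\ep}+\a^\ast_{\mu_t^\ep}$ depending on $\mu_t^\ep$ produce a source obtained by differentiating the coefficients $b,a$ in $\mu$ once more against $m^{(1)}(t;\mu,\delta_{z_2})$ (symmetrized because both $m^{(1)}$'s depend on $\mu$). Expanding this with the product rule gives exactly the six terms defining $\src{\mu}{m^{(1)}(\cdot;\mu,\delta_{z_1})}{m^{(1)}(\cdot;\mu,\delta_{z_2})}$: the $\frac{\delta b}{\delta m}$ and $\frac{\delta a}{\delta m}$ terms come from first-order variations of the coefficients paired with the other $m^{(1)}$, and the $\frac{\delta^2 b}{\delta m^2}$, $\frac{\delta^2 a}{\delta m^2}$ terms come from the joint second-order variation tested against $\mu_t$.

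\textbf{Main obstacle.} The abstract chain rule is clean; the technical difficulty is justifying the exchange of $\partial_\ep$ with the infinite-time evolution and anchoring the whole computation in a fixed dual Sobolev space. For this I would need uniform-in-$\ep$ bounds on $m^{(1)}(t;\mu^\ep,\cdot)$ in $(W^{n,\infty})'$, together with continuity of the map $\mu\mapsto\cauchy{\mu}{\cdot}{\cdot}$ in this norm. The CBO-specific issue is that the coefficient $a(x,\mu)$ vanishes when $x$ coincides with $M(\mu)$, so $\lin^\ast$ is not uniformly elliptic; however, thanks to the cutoff $\phi_{c_0,2r_\cut}$ all relevant bounds are obtained on the compact support $B(c_0,2r_\cut)$, where the explicit formulas for $\frac{\delta^k b}{\delta m^k}$, $\frac{\delta^k a}{\delta m^k}$ ($k=1,2$) given in the text are bounded in $W^{n,\infty}$ uniformly in $\mu$. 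Modulo these estimates — which are precisely what Section~\ref{s:core-est} is devoted to — the decomposition follows exactly as in Propositions 3.3--3.4 of \cite{DelarueTse2021}.
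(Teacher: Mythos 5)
Your proposal is correct and takes essentially the same route the paper does: the paper itself does not write out a proof of Lemma~\ref{l:decomp-into-m}, but instead directly invokes Propositions 3.3 and 3.4 of~\cite{DelarueTse2021}, and your Steps 1--3 are precisely the computation those propositions encapsulate (linearizing the flow to get $m^{(1)}$, a first chain rule giving $\frac{\delta\cU}{\delta m}(t,\mu_0,z)=\ip{\frac{\delta\Phi}{\delta m}(\mu_t),m^{(1)}(t;\mu,\delta_z)}$, and then a second differentiation that splits into the $\frac{\delta^2\Phi}{\delta m^2}$ term plus a new L-FPE whose source $\src{\mu}{q_1}{q_2}$ is the symmetric second variation of the Fokker--Planck generator). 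Your identification of the initial datum $\mu_0-\delta_{z_2}$ from differentiating $\delta_{z_1}-\mu_0^\ep$, and of the six source terms as the second derivative $D^2F(\mu_t)[q_1,q_2]$ with $F$ the nonlinear FP operator, is exactly right, and you correctly note that only the regularity condition~\eqref{e:phi-reg-6}, not the translation invariance~\eqref{e:phi-translate-inv}, is used here. The one small imprecision is in your ``main obstacle'' remark: the lemma is a finite-$t$ identity, so the issue is not exchanging $\partial_\ep$ with an \emph{infinite}-time evolution but rather the routine well-posedness and Gâteaux differentiability of $\mu_0\mapsto P_t\mu_0$ in $(W^{n,\infty}(B(c_0,2r_\cut)))'$ for each fixed $t$, which is what Propositions 3.3--3.4 of~\cite{DelarueTse2021} deliver under the coefficient regularity you verify from the explicit formulas for $\frac{\delta^k b}{\delta m^k}$, $\frac{\delta^k a}{\delta m^k}$ on the compact domain.
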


\begin{lemma}
	\label{l:decomp-into-d}
	Suppose $\Phi$ satisfies the condition of Theorem~\ref{t:main}. 
    Then, for any $\mu \in \c_\cbo$, $z_1, z_2 \in \R^d$, $i,j \in [d]$, we have 
    \begin{align*}
        & \pa_{(z_2)_j} \pa_{(z_1)_i} \frac{\delta^2 \cU}{\delta m^2} (t,\mu_0,z_1,z_2) = \\
		& \qquad \frac{\delta^2 \Phi}{\delta m^2} (\mu_t) (d^{(1)}_i(t;\mu,{z_1}), d^{(1)}_j(t;\mu,{z_2})) + \frac{\delta \Phi}{\delta m} (\mu_t) (d^{(2)}_{i,j}(t;\mu_0,{z_1}, {z_2}))  \,,
    \end{align*}
	where 
	\begin{align*}
		d^{(1)}_j (\cdot; \mu,z) & := \cauchy{(\mu_t)_{t \ge 0}}{-\pa_{x_j} \delta_z}{0} \,, \\
		d^{(2)}_{i,j} (\cdot; \mu,z_1,z_2) & := \cauchy{(\mu_t)_{t \ge 0}}{0}{\src{(\mu_t)_{t \ge 0}}{d^{(1)}_i(\cdot;\mu,z_1)}{d^{(1)}_j(\cdot;\mu,z_2)}} \,.
	\end{align*}
	Here $\p_{x_j} \delta_z$ is the distribution such that $\ip{f, -\p_{x_j} \delta_z} \defeq \p_{x_j}f(z)$. 
\end{lemma}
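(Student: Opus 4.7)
The plan is to obtain Lemma~\ref{l:decomp-into-d} directly by differentiating the identity of Lemma~\ref{l:decomp-into-m} in the coordinates $(z_1)_i$ and $(z_2)_j$, transferring these parameter derivatives into distributional derivatives on the initial data of the underlying L-FPEs. Starting from
\[
\frac{\delta^2 \cU}{\delta m^2}(t,\mu_0,z_1,z_2) = \frac{\delta^2 \Phi}{\delta m^2}(\mu_t)\bigl(m^{(1)}(t;\mu,\delta_{z_1}), m^{(1)}(t;\mu,\delta_{z_2})\bigr) + \frac{\delta \Phi}{\delta m}(\mu_t)\bigl(m^{(2)}(t;\mu,\delta_{z_1},\delta_{z_2})\bigr),
\]
I note that only $m^{(1)}$ and $m^{(2)}$ depend on $z_1,z_2$, and that they do so linearly (resp.\ bilinearly) through Dirac masses fed into the L-FPE solution operator $(q_0, r) \mapsto \cauchy{\mu}{q_0}{r}$.

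The key observation is that, for any $f \in W^{1,\infty}$, $\partial_{(z)_i}\ip{f,\delta_z} = \partial_i f(z) = \ip{f,-\partial_{x_i}\delta_z}$, so $\partial_{(z)_i}\delta_z = -\partial_{x_i}\delta_z$ as elements of $(W^{n,\infty})'$ for $n \ge 1$. Combined with the linearity of the Cauchy map, this yields
\[
\partial_{(z_1)_i} m^{(1)}(t;\mu,\delta_{z_1}) = \cauchy{\mu}{-\partial_{x_i}\delta_{z_1}}{0} = d^{(1)}_i(t;\mu,z_1),
\]
and analogously for $(z_2)_j$. The initial condition of $m^{(2)}$, namely $\mu_0 - \delta_{z_2}$, is independent of $z_1$, and its $(z_2)_j$-derivative $\partial_{x_j}\delta_{z_2}$ is in turn annihilated by $\partial_{(z_1)_i}$. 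The source $\src{\mu}{\cdot}{\cdot}$ is manifestly bilinear in its two arguments, as its defining formula is a sum of integrals that is linear in each of $q_1, q_2$ separately; consequently
\[
\partial_{(z_1)_i}\partial_{(z_2)_j} m^{(2)}(t;\mu,\delta_{z_1},\delta_{z_2}) = \cauchy{\mu}{0}{\src{\mu}{d^{(1)}_i(\cdot;\mu,z_1)}{d^{(1)}_j(\cdot;\mu,z_2)}} = d^{(2)}_{i,j}(t;\mu,z_1,z_2).
\]
Since $\frac{\delta^2 \Phi}{\delta m^2}(\mu_t)(\cdot,\cdot)$ is bilinear and $\frac{\delta \Phi}{\delta m}(\mu_t)(\cdot)$ is linear, and since $m^{(1)}(t;\mu,\delta_{z_k})$ depends on only one of $z_1, z_2$, the cross terms in the product rule vanish and the claimed formula drops out.

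The main obstacle is rigorously justifying this exchange of $z$-differentiation with the L-FPE solution operator. This requires well-posedness of the L-FPE with distributional initial data such as $-\partial_{x_i}\delta_z$ in the dual Sobolev spaces $(W^{n,\infty})'$ used throughout the paper, together with the continuous, in fact affine, dependence of $\cauchy{\mu}{\cdot}{\cdot}$ on its initial data and source. Both ingredients are precisely the generic properties of the L-FPE to be established in Section~\ref{s:core-est}. With those at hand, Fréchet differentiability of $z \mapsto \cauchy{\mu}{\delta_z - \mu_0}{0}$ is obtained by forming the difference quotient, exploiting linearity of the Cauchy map to move it onto $h^{-1}(\delta_{z+he_i}-\delta_z)$, and passing to the limit in $(W^{n,\infty})'$ via a mollification argument and the regularity of $\Phi$ encoded in~\eqref{e:phi-reg-6}.
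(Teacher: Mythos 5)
Your proof is correct and follows the natural route: differentiate the decomposition of Lemma~\ref{l:decomp-into-m} in the parameters $(z_1)_i$ and $(z_2)_j$, using $\partial_{(z)_i}\delta_z = -\partial_{x_i}\delta_z$ together with the linearity of the Cauchy map $\cauchy{\mu}{\cdot}{\cdot}$, the bilinearity of $\src{\mu}{\cdot}{\cdot}$, and the (bi)linearity of $\frac{\delta\Phi}{\delta m}$ and $\frac{\delta^2\Phi}{\delta m^2}$; since each $m^{(1)}$ slot depends on only one of $z_1,z_2$, there are no cross terms and the claimed identity falls out. The paper does not write out a proof but cites Propositions~3.3 and~3.4 of~\cite{DelarueTse2021}, where precisely this differentiation of the linearized semigroup in the point-mass parameter is carried out, so your argument reconstructs the citation rather than taking a different route. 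The one small caveat is at the end: justifying the exchange of $z$-differentiation with the Cauchy map rests on continuity and linearity of $\cauchy{\mu}{\cdot}{\cdot}$ on $(W^{n,\infty})'$ (the content of Section~\ref{s:core-est}), not on the regularity of $\Phi$ in~\eqref{e:phi-reg-6}, which only enters when pairing the resulting distributions against $\frac{\delta\Phi}{\delta m}$ and $\frac{\delta^2\Phi}{\delta m^2}$.
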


The decomposition above reduces our goal to the bounds and decays of the those solutions to the L-FPEs. 
We present the estimates on those solutions in the following lemmata and defer the proofs to Sections~\ref{s:prf-1st-order} and~\ref{s:prf-2nd-order}.

The following two lemmata show the boundedness of $m^{(1)}$ and $d^{(1)}$ and the exponential decay of their projections. 
\begin{lemma}
\label{l:m1-bound}
	There exist some constants $C, \kappa_0 > 0$ such that, for every $j \in [d]$, $\mu \in \c_\cbo$, $z \in B(c_0, 2r_\cut)$, 
	\begin{equation*}
		\sup_{t \ge 0} \norm{m^{(1)}(t;\mu,\delta_z)}_{(2,\infty)'} \le C \,,
	\end{equation*}
	and one can find a vector $q^{(1)}_{0,\infty}(\mu,z) \in \R^d$ such that 
	\begin{equation*}
		\norm{m^{(1)}(t;\mu,\delta_z) - q^{(1)}_{0,\infty}(\mu,z) \cdot \grad \delta_{\tilde x^\mu} }_{(4,\infty)'} \le C e^{-\kappa_0 t} \,, \qquad \forall t \ge 0 \,.
	\end{equation*}
\end{lemma}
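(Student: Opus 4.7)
The plan is to run a duality argument for the linearized Fokker--Planck equation, and then to exploit the special structure of the gradient equation highlighted in the introduction. Fixing $t>0$ and a test function $f\in W^{n,\infty}(B(c_0,2r_\cut))$, I would introduce the backward Cauchy problem
\begin{equation*}
\p_s w(s,x)+(\lin_{\mu_s}+\a_{\mu_s})w(s,x)=0,\quad s\in[0,t],\qquad w(t,\cdot)=f(\cdot).
\end{equation*}
By construction of \eqref{e:l-fpe}, pairing $f$ against $m^{(1)}(t;\mu,\delta_z)=\cauchy{\mu}{\delta_z-\mu_0}{0}$ reduces to
\begin{equation*}
\ip{f, m^{(1)}(t;\mu,\delta_z)}=\ip{w(0,\cdot),\delta_z-\mu_0}=\int\bigl[w(0,z)-w(0,y)\bigr]\mu_0(dy),
\end{equation*}
where the second equality uses $\ip{1,\delta_z-\mu_0}=0$. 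The two claims of the lemma therefore translate into, respectively, a uniform bound and an exponential convergence for $\grad_x w(0,\cdot)$.

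To handle $\grad_x w$ I would differentiate the Cauchy equation to obtain, as previewed in the introduction,
\begin{equation*}
\p_s \p_{x_i} w+(\lin^1_{\mu_s}+\a^1_{\mu_s}-\lambda)\p_{x_i} w=0,
\end{equation*}
in which the new zero-order coefficient $-\lambda$ comes from differentiating the drift $-\lambda(x-M(\mu_s))$. Representing $\p_{x_i}w$ by Feynman--Kac along the underlying (time-inhomogeneous, degenerate) diffusion $\bar X^{s,x}_r$, the $-\lambda$ term furnishes the exponentially weighted bound
\begin{equation*}
\norm{\p_{x_i} w(s,\cdot)}_\infty\le C\norm{f}_{2,\infty},\qquad 0\le s\le t,
\end{equation*}
uniform in $t$. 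Combined with $\abs{z-y}\le 4r_\cut$ on the compact support of $\mu_0$, the mean-value estimate for $\abs{w(0,z)-w(0,y)}$ then delivers the first assertion $\sup_{t\ge 0}\norm{m^{(1)}(t;\mu,\delta_z)}_{(2,\infty)'}\le C$.

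For the decay claim I would use the sharper Feynman--Kac statement announced in the introduction: there exist bounded quantities $\varphi^{\rem}_i(s,x)$ built from the tangent flow $D_x\bar X^{s,x}_r$ of the diffusion such that
\begin{equation*}
\bigl|\p_{x_i} w(s,x)-\varphi^{\rem}_i(s,x)\cdot\grad f(\tilde x^\mu)\bigr|\le C e^{-\lambda(t-s)}\norm{f}_{4,\infty}
\end{equation*}
uniformly in $x\in B(c_0,2r_\cut)$. Substituting this into the fundamental-theorem identity $w(0,z)-w(0,y)=\int_0^1\grad w(0,y+\tau(z-y))\cdot(z-y)\,d\tau$ and integrating against $\mu_0(dy)$ rewrites $\ip{f,m^{(1)}(t;\mu,\delta_z)}$ as $A(\mu,z)\cdot\grad f(\tilde x^\mu)$ plus an $O(e^{-\kappa_0 t}\norm{f}_{4,\infty})$ remainder, for a vector $A(\mu,z)$ depending only on $\mu_0$, $z$, and the limiting profile $\varphi^{\rem}$. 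Since $\ip{f,v\cdot\grad\delta_{\tilde x^\mu}}=-v\cdot\grad f(\tilde x^\mu)$, the choice $q^{(1)}_{0,\infty}(\mu,z):=-A(\mu,z)$ matches the two sides and yields the desired rate.

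The hard part will be establishing the gradient-level decay to $\varphi^{\rem}_i(s,x)\cdot\grad f(\tilde x^\mu)$. Because the volatility $\sigma\abs{x-M(\mu_s)}\phi(x)$ vanishes as $\mu_s$ concentrates at $\tilde x^\mu$, the operator $\lin$ is not uniformly elliptic and $w$ itself does not decay, so the analysis must be carried out at the gradient level, where the exponential attenuation comes purely from the attractive drift $-\lambda(x-M(\mu_s))$. The Feynman--Kac representation then requires sharp moment and sensitivity estimates for $\bar X^{s,x}_r$ and its derivative in $x$, which on the ball $B(c_0,2r_\cut)$ behave like suitably time-changed geometric Brownian motions; the extra derivatives of $f$ consumed in this analysis account for the jump from the $(2,\infty)'$ bound to the $(4,\infty)'$-norm decay.
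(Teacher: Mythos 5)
You have the right duality framework (pairing against the backward Cauchy problem), and the right high-level intuition that the decay must come from $\grad_x w$ rather than $w$ itself. However, there are two genuine gaps that the paper's proof avoids by a decomposition you do not use.

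First, the operator $\a_{\mu_s}$ is \emph{nonlocal}: $\a_{\mu_s}w(s,x)$ involves the integral $\ip{\lambda\grad w(s)-\sigma^2\phi^2(\cdot-M(\mu_s))\lap w(s),\mu_s}$, which is an unknown function of $s$ depending on $w$ itself. You cannot write a closed Feynman--Kac representation for $w$ or for $\p_{x_i}w$ in the presence of this term; what you get is an implicit fixed-point relation. The paper (in Lemma~\ref{lm:bound-q}) introduces $L_s\defeq\ip{\lambda\grad w(s)-\sigma^2(\cdot-M(\mu_s))\phi^2\lap w(s),\mu_s}$ and then runs a backward Gr\"onwall argument on $\abs{L_s}_1$ to close the bound, which is where all the real work lies. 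Your one-line claim that ``the $-\lambda$ term furnishes the exponentially weighted bound $\norm{\p_{x_i}w(s)}_\infty\le C\norm{f}_{2,\infty}$'' presupposes this Feynman--Kac representation and silently assumes $L_s$ is controlled.

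Second, and more seriously, the decay estimate $\abs{\p_{x_i}w(s,x)-\varphi^{\rem}_i(s,x)\cdot\grad f(\tilde x^\mu)}\le Ce^{-\lambda(t-s)}\norm{f}_{4,\infty}$ is asserted but cannot be derived directly from the time-varying equation as you set it up. In the paper, the source of the $\varphi^{\rem}$ correction is the identity $\grad w(s,\tilde x^\mu)=\grad\xi(\tilde x^\mu)$ for all $s$, which holds \emph{only} when the coefficients are frozen at $\delta_{\tilde x^\mu}$: with frozen coefficients the $-\lambda$ coming from the drift and the $+\lambda$ coming from the $\a$ operator cancel exactly at $x=\tilde x^\mu$ (because the diffusion coefficient $\phi(x)^2\abs{x-\tilde x^\mu}^2$ also vanishes there). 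This is proved in Lemma~\ref{lm:erg-at-delta}. For a general $\mu_s$, the cancellation point moves with $M(\mu_s)$, so $\grad w(s,\tilde x^\mu)$ is not constant in $s$ and the Feynman--Kac split you invoke does not exist. The paper's remedy is to rewrite $\p_tq_t=L_{\mu_t}q_t$ as $\p_tq_t=L_{\delta_{\tilde x^\mu}}q_t+(L_{\mu_t}-L_{\delta_{\tilde x^\mu}})q_t$, treat the second piece as a decaying source via Lemmata~\ref{lm:connect-r1-q} and~\ref{lm:bound-q}, and then feed the resulting $\cauchy{\delta_{\tilde x^\mu}}{q_0}{r^1}$ into Lemma~\ref{lm:erg-at-delta}. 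That splitting is the missing idea; without it, neither the $\varphi^{\rem}$ limit nor the bound on $L_s$ is available.
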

\begin{lemma}
\label{l:d1-bound}
	There exist some constants $C, \kappa_0 > 0$ such that, for every $j \in [d]$, $\mu \in \c_\cbo$, $z \in B(c_0, 2r_\cut)$, 
	\begin{equation*}
		\sup_{t \ge 0} \norm{d^{(1)}_j(t;\mu,z)}_{(2,\infty)'} \le C \,,
	\end{equation*}
	and one can find a vector $q^{(1)}_{j,\infty}(\mu,z) \in \R^d$ such that 
	\begin{equation*}
		\norm{d^{(1)}_j(t;\mu,z) - q^{(1)}_{j,\infty}(\mu,z) \cdot \grad \delta_{\tilde x^\mu} }_{(4,\infty)'} \le C e^{-\kappa_0 t} \,, \qquad \forall t \ge 0 \,.
	\end{equation*}
\end{lemma}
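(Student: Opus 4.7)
The plan is to exploit the duality between \eqref{e:l-fpe} and its backward Cauchy adjoint, adapting the strategy for Lemma~\ref{l:m1-bound} to accommodate the distributional-derivative initial data of $d^{(1)}_j$. Fix $t > 0$ and $f \in W^{n,\infty}(B(c_0, 2r_\cut))$, and let $w$ solve
\begin{equation*}
\p_s w(s,x) + \lin_{\mu_s} w(s,x) + \a_{\mu_s} w(s,x) = 0, \qquad w(t,\cdot) = f,
\end{equation*}
on $[0,t] \times B(c_0, 2r_\cut)$. Pairing with $d^{(1)}_j$ and using the distributional identity $\ip{g, -\p_{x_j}\delta_z} = \p_{x_j} g(z)$ gives
\begin{equation*}
\ip{f, d^{(1)}_j(t;\mu,z)} = \ip{w(0,\cdot), -\p_{x_j}\delta_z} = \p_{x_j} w(0,z).
\end{equation*}
Both claims therefore reduce to pointwise control of $\p_{x_j} w(0,z)$: (i) a bound $|\p_{x_j} w(0,z)| \le C \|f\|_{2,\infty}$ uniformly in $t,\mu,z$, and (ii) exponential decay of $\p_{x_j} w(0,z)$ to a linear form $q^{(1)}_{j,\infty}(\mu,z)\cdot \grad f(\tilde x^\mu)$ with rate $\kappa_0$ and constant $C\|f\|_{4,\infty}$.

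Differentiating the backward equation in $x_i$ and applying the product rule to the drift $-\lambda(x-M(\mu_s))\cdot \grad w$ extracts an explicit $-\lambda \p_{x_i} w$ from the differentiation of the coefficient. The remaining terms reorganize into an operator of the same structural type (denoted $\lin^1_{\mu_s} + \a^1_{\mu_s}$ as in Section~\ref{s:strategy}) plus a source $R_i$ that depends linearly on the other $\p_{x_k} w$ through differentiation of the volatility $\sigma^2 |x-M(\mu_s)|^2 \phi(x)^2$ and the cutoff $\phi$:
\begin{equation*}
\p_s \p_{x_i} w + (\lin^1_{\mu_s} + \a^1_{\mu_s} - \lambda) \p_{x_i} w = R_i.
\end{equation*}
By Feynman-Kac applied to the SDE generated by $\lin^1_{\mu_s}+\a^1_{\mu_s}$, whose paths remain in $B(c_0,2r_\cut)$ by Proposition~\ref{pp:CBO-restrict},
\begin{equation*}
\p_{x_i} w(s,x) = e^{-\lambda(t-s)} \E[\p_{x_i} f(X^{s,x}_t)] + \int_s^t e^{-\lambda(u-s)} \E[R_i(u, X^{s,x}_u)]\,du.
\end{equation*}
Summing over $i$ and closing a Gr\"onwall-type inequality in $\max_k \sup_x |\p_{x_k} w(s,x)|$ (the coupling through $R_i$ is uniformly bounded because $\phi$ and its derivatives are compactly supported and smooth) yields (i).

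For (ii), Proposition~\ref{pp:CBO-mf-decay} supplies exponential contraction $M(\mu_s) \to \tilde x^\mu$, and the geometric-Brownian-motion analysis alluded to in Section~\ref{s:strategy} shows that $X^{0,z}_t \to \tilde x^\mu$ in $L^2$ with exponential rate while the rescaled Jacobian $e^{-\lambda t} \p_{x_j} X^{0,z}_t$ converges to a deterministic vector, which we take as $q^{(1)}_{j,\infty}(\mu,z)$. Taylor-expanding $\grad f(X^{0,z}_t)$ around $\tilde x^\mu$ to first order, using $\|f\|_{4,\infty}$ to control the Hessian remainder, and inserting this into the Feynman-Kac representation above yields (ii). The principal obstacle, as flagged in Section~\ref{s:strategy}, is that $\lin$ degenerates at $\tilde x^\mu$, so $w$ itself does not decay exponentially; the entire argument hinges on the fact that a single spatial derivative manufactures the dissipative term $-\lambda$ from the drift. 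Carefully tracking the cross-coupling through $R_i$ and the terms generated by $\p^k \phi$ is what forces the two-derivative gap between the $(2,\infty)'$ bound in (i) and the $(4,\infty)'$ decay in (ii).
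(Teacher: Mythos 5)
Your starting point is sound: the dual identity $\ip{f,d^{(1)}_j(t)}=\p_{x_j}w(0,z)$ and the observation that a single $x$-derivative pulls $-\lambda$ out of the drift are exactly the heuristics the paper states in Section~\ref{s:strategy}. But the execution of that heuristic contains two concrete gaps. First, differentiating $\frac{\sigma^2}{2}\phi^2|x-M(\mu_s)|^2\lap w$ in $x_i$ produces the source $\frac{\sigma^2}{2}\p_{x_i}\bigl(\phi^2|x-M(\mu_s)|^2\bigr)\lap w$, and $\lap w=\sum_k\p_{x_k}^2 w$ is a \emph{second}-order quantity, not a linear expression in the $\p_{x_k}w$. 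A Gr\"onwall inequality in $\max_k\sup_x|\p_{x_k}w(s,x)|$ therefore cannot close: the right-hand side couples to $\|w(s)\|_{2,\infty}$, which in turn couples to still higher derivatives, with no a priori reason for the hierarchy to terminate. Second, your Feynman--Kac identity is written for a diffusion ``generated by $\lin^1_{\mu_s}+\a^1_{\mu_s}$,'' but $\a_{\mu_s}$ is a nonlocal operator (it integrates against $\mu_s$), so it does not generate a diffusion at all and cannot be absorbed into the generator; the representation as stated does not hold.

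The paper sidesteps both issues by never writing a PDE for $\p_{x_i}w$. It applies Feynman--Kac to the \emph{undifferentiated} backward equation, with the diffusion $Y^{t,x,\mu}$ generated by $\lin_{\mu_t}$ alone and the nonlocal contribution of $\a_{\mu_t}$ appearing as the scalar-valued source $\frac{L_u}{\ip{e^{-\alpha\efn},\mu_u}}G_u$, where $u\mapsto L_u\in\R^d$ is the only implicit unknown. Differentiating that stochastic representation in $x$ introduces the Jacobians $\p_{x_i}Y^{t,x,\mu}$ and $\p_{x_i}^2 Y^{t,x,\mu}$, which obey closed linear SDEs with exponential decay (Lemmata~\ref{lm:prop-gbm} and~\ref{lm:prop-y}); the Gr\"onwall step is applied only to the scalar $|L_u|_1$, which does close. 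Note also that $\E[D\xi(Y_T)(\p_{x_i}Y_T)]\neq e^{-\lambda(T-t)}\E[\p_{x_i}\xi(Y_T)]$, since the Jacobian is random and only its \emph{expectation} equals $e^{-\lambda(T-t)}\ve_i$; the honest $L^1$-rate is $\lambda/2$. Two further ingredients of the paper's proof are absent from your sketch: the reduction of general $\mu\in\c_\cbo$ to the invariant measure $\delta_{\tilde x^\mu}$ by writing $L_{\mu_t}q_t = L_{\delta_{\tilde x^\mu}}q_t + (L_{\mu_t}-L_{\delta_{\tilde x^\mu}})q_t$ and controlling the discrepancy through Lemma~\ref{lm:connect-r1-q} plus the uniform bound of Lemma~\ref{lm:bound-q}; and the construction of $q^{(1)}_{j,\infty}(\mu,z)$, which is not a rescaled-Jacobian limit but the Cauchy limit of the accumulated functional $\gamma^\rem_j(T_0;z)=\varphi^\rem_j(T_0;0,z)-\int_0^{T_0}\sum_i\ip{\varphi^\rem_i(T_0;t,\cdot),R^i_t}\,dt$ built in the proof of Lemma~\ref{lm:erg-at-delta}.
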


Similarly, the two lemmata below present the uniform boundedness and decay of $m^{(2)}$ and $d^{(2)}$.
\begin{lemma}
\label{l:m2-bound}
	For every $j \in [d]$, $\mu \in \c_\cbo$, $z_1,z_2 \in B(c_0, 2r_\cut)$, we define 
	\begin{equation*}
		\bar m^{(2)}(\cdot;\mu,\delta_{z_1},\delta_{z_2}) = m^{(2)}(\cdot;\mu,\delta_{z_1},\delta_{z_2}) - q^{(1)}_{0,\infty}(\mu,z_2) \cdot \grad m^{(1)}(\cdot;\mu,\delta_{z_1}) \,.
	\end{equation*}
	Then there exist some constants $C > 0$, uniformly in $j,\mu,z_1,z_2$, such that 
	\begin{equation*}
		\norm{\bar m^{(2)}(\cdot;\mu,\delta_{z_1},\delta_{z_2})}_{(6,\infty)'} \le C \,.
	\end{equation*}
\end{lemma}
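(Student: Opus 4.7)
The plan is to derive a linearized Fokker--Planck equation for $\bar m^{(2)}$, show that the resulting source is integrable in time in the $(6,\infty)'$ norm, and then invoke the uniform stability estimate for L-FPEs that underlies Lemma~\ref{l:m1-bound}.

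First, since $\grad$ does not commute with $\lin^\ast_{\mu_t}+\a^\ast_{\mu_t}$, applying $q^{(1)}_{0,\infty}(\mu,z_2)\cdot\grad$ to the L-FPE solved by $m^{(1)}(\cdot;\mu,\delta_{z_1})$ shows that the compensator $\tilde w_t \defeq q^{(1)}_{0,\infty}(\mu,z_2)\cdot \grad m^{(1)}(t;\mu,\delta_{z_1})$ satisfies
\begin{equation*}
\p_t \tilde w_t = (\lin^\ast_{\mu_t}+\a^\ast_{\mu_t})\tilde w_t + q^{(1)}_{0,\infty}(\mu,z_2)\cdot \bigl[\grad,\; \lin^\ast_{\mu_t}+\a^\ast_{\mu_t}\bigr]\, m^{(1)}(t;\mu,\delta_{z_1}).
\end{equation*}
Subtracting from the L-FPE solved by $m^{(2)}$ then exhibits $\bar m^{(2)}$ as the solution of an L-FPE with initial datum of order at most one supported in $B(c_0,2r_\cut)$ and with source
\begin{equation*}
\bar r_t = \src{\mu}{m^{(1)}(\cdot;\mu,\delta_{z_1})}{m^{(1)}(\cdot;\mu,\delta_{z_2})}(t) \;-\; q^{(1)}_{0,\infty}(\mu,z_2)\cdot \bigl[\grad,\; \lin^\ast_{\mu_t}+\a^\ast_{\mu_t}\bigr]\, m^{(1)}(t;\mu,\delta_{z_1}).
\end{equation*}

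Next I would show that $\bar r_t$ is exponentially integrable in $(6,\infty)'$ using the splitting $m^{(1)}(t;\mu,\delta_{z_2}) = q^{(1)}_{0,\infty}(\mu,z_2)\cdot \grad \delta_{\tilde x^\mu} + \tilde m^{(1)}_{2}(t)$ from Lemma~\ref{l:m1-bound}. The \emph{fast} part of $\bar r_t$ (those terms carrying a factor of $\tilde m^{(1)}_2$) decays like $e^{-\kappa_0 t}$ because, when tested against $f\in W^{6,\infty}$, the bilinear form $\src{\mu}{\cdot}{\cdot}$ only uses $\grad f$ and $\grad^2 f$ against the smooth kernels $\frac{\delta b}{\delta m}, \frac{\delta a}{\delta m}$ and their $m$-derivatives, so one factor can always be paired against $\tilde m^{(1)}_2$ in $(4,\infty)'$ while the other sits in $(2,\infty)'$, both controlled by Lemma~\ref{l:m1-bound}. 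The \emph{leading} part, in which $m^{(1)}(\cdot;\mu,\delta_{z_2})$ is replaced by its Dirac-gradient limit, is exactly what integration by parts against $\grad\delta_{\tilde x^\mu}$ produces, and it coincides with the commutator term modulo an $O(\abs{M(\mu_t)-\tilde x^\mu}+\abs{\int x\mu_t(dx)-\tilde x^\mu})$ error that is $O(e^{-\kappa t})$ by Proposition~\ref{pp:CBO-mf-decay}.

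Finally, Duhamel's formula for the L-FPE combined with the uniform decay of the homogeneous L-FPE semigroup on mean-zero data (to be established in Section~\ref{s:core-est} alongside Lemma~\ref{l:m1-bound}) absorbs this $L^1([0,\infty);(6,\infty)')$ source into the desired uniform-in-time bound on $\bar m^{(2)}$. The main obstacle I expect is the algebraic bookkeeping needed to match $\src{\mu}{m^{(1)}(\cdot;\mu,\delta_{z_1})}{q^{(1)}_{0,\infty}(\mu,z_2)\cdot \grad\delta_{\tilde x^\mu}}$ with the commutator $q^{(1)}_{0,\infty}(\mu,z_2)\cdot[\grad,\lin^\ast_{\mu_t}+\a^\ast_{\mu_t}]m^{(1)}(t;\mu,\delta_{z_1})$: this is where one spatial derivative is transferred from $\grad\delta_{\tilde x^\mu}$ onto the coefficients of $\lin$ and $\a$, which is also the reason the argument closes in $(6,\infty)'$ rather than the $(4,\infty)'$ norm available for $m^{(1)}$.
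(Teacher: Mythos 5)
Your proposal follows the same line as the paper's proof: you view $\bar m^{(2)}$ as a solution to an L-FPE whose source is the original $\src{\mu}{m^{(1)}}{m^{(1)}}$ term plus the commutator defect $q^{(1)}_{0,\infty}(\mu,z_2)\cdot\bigl((\lin^\ast+\a^\ast)\grad - \grad(\lin^\ast+\a^\ast)\bigr)m^{(1)}$, expand $m^{(1)}(t;\mu,\delta_{z_2})$ as $\tilde m^{(1)}_2(t)+q^{(1)}_{0,\infty}\cdot\grad\delta_{\tilde x^\mu}$ to isolate the cancelling leading part from the exponentially small remainder, and close via the uniform-in-time L-FPE estimate. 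This is exactly the structure of the paper's argument (with the explicit twelve-term breakdown $L_1,\dots,L_{12}$ carrying the cancellations you anticipate).

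One conceptual point deserves correction. You close by invoking ``uniform decay of the homogeneous L-FPE semigroup on mean-zero data,'' but the paper establishes no such decay and, in fact, it is false: Lemma~\ref{lm:erg-at-delta} shows that the homogeneous solution converges to a \emph{nonzero} limit $q_\infty\cdot\grad\delta_{\tilde x^\mu}$, and the introduction explicitly warns that the backward Cauchy problem does not exhibit the backward-in-time decay one has in the fully diffusive setting. What is available, and what the paper actually uses, is the uniform-in-time \emph{boundedness} from Lemma~\ref{lm:bound-q}, which is proved only for initial data of the specific types listed there and for sources in divergence form $\bar r_t = \sum_i \p_{x_i}\bar R^i_t$ with $\norm{\bar R^i_t}_{(5,\infty)'}\le Ke^{-\kappa_0 t}$. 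The divergence structure is essential: it is what lets one test against the uniformly bounded $\p_{x_i}w$ rather than against $w$ itself, which would not help since the backward flow is only bounded, not decaying. So your step 3 would fail if taken literally; it should be replaced by the statement that $\bar r_t$ is exhibited in divergence form with the $R^i$ decaying, and then Lemma~\ref{lm:bound-q} (with a slight extension allowing the initial datum $(\mu_0-\delta_{z_2}) - q^{(1)}_{0,\infty}\cdot\grad(\delta_{z_1}-\mu_0)$) is applied directly. The remaining gap you flag yourself — matching $\src{\mu}{m^{(1)}}{q^{(1)}_{0,\infty}\cdot\grad\delta_{\tilde x^\mu}}$ against the commutator — is genuinely the bulk of the proof; the paper verifies the cancellations term by term and checks that the residuals carry factors of $\abs{M(\mu_t)-\tilde x^\mu}$, $\norm{\mu_t-\delta_{\tilde x^\mu}}_{(1,\infty)'}$, or $\tilde m^{(1)}$, each of order $e^{-(\kappa\land\kappa_0)t}$.
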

\begin{lemma}
\label{l:d2-bound}
	For every $j \in [d]$, $\mu \in \c_\cbo$, $z_1,z_2 \in B(c_0, 2r_\cut)$, we define 
	\begin{equation*}
		\bar d^{(2)}_{j,j}(\cdot;\mu,z_1,z_2) = d^{(2)}_{j,j}(\cdot;\mu,z_1,z_2) - q^{(1)}_{j,\infty}(\mu,z_2) \cdot \grad d^{(1)}_j(\cdot;\mu,z_1) \,.
	\end{equation*}
	Then there exist some constants $C, \kappa_0' > 0$, uniformly in $j,\mu,z_1,z_2$, such that 
	\begin{equation*}
		\norm{\bar d^{(2)}_{j,j}(t;\mu,z_1,z_2)}_{(4,\infty)'} \le Ct \,,  \qquad \forall t \ge 0 \,,
	\end{equation*}
	and one can find a vector $q^{(2)}_{j,\infty}(\mu,z_1,z_2) \in \R^d$ such that 
	\begin{equation*}
		\norm{\bar d^{(2)}_{j,j}(t;\mu,z_1,z_2) - q^{(2)}_{j,\infty}(\mu,z_1,z_2) \cdot \grad \delta_{\tilde x^\mu} }_{(6,\infty)'} \le C e^{-\kappa_0' t} \,, \qquad \forall t \ge 0 \,.
	\end{equation*}
\end{lemma}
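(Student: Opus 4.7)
The plan is to mirror the proof of Lemma~\ref{l:m2-bound}, additionally invoking the exponential decay of $d^{(1)}_j$ from Lemma~\ref{l:d1-bound} to upgrade the uniform bound to the two-norm estimate claimed here. The key idea is that $\bar d^{(2)}_{j,j}$ solves an L-FPE whose source, after the subtraction of $q^{(1)}_{j,\infty}(\mu,z_2) \cdot \grad d^{(1)}_j(\cdot;\mu,z_1)$, is exponentially small in a suitable dual norm. To set this up, observe that since $d^{(1)}_j(\cdot;\mu,z_1)$ solves a source-free L-FPE, its spatial gradient satisfies
\[
\p_t \grad d^{(1)}_j(t;\mu,z_1) = (\lin_{\mu_t}^\ast + \a_{\mu_t}^\ast) \grad d^{(1)}_j(t;\mu,z_1) + [\grad, \lin_{\mu_t}^\ast + \a_{\mu_t}^\ast] d^{(1)}_j(t;\mu,z_1),
\]
where the commutator is a first-order differential operator whose coefficients are spatial derivatives of the coefficients of $\lin^\ast, \a^\ast$ (controlled by the cutoff $\phi$ and Condition~\ref{cd:efn-property}). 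Taking the inner product with $q^{(1)}_{j,\infty}(\mu,z_2)$ and subtracting from the L-FPE for $d^{(2)}_{j,j}$ yields
\[
\p_t \bar d^{(2)}_{j,j}(t) = (\lin_{\mu_t}^\ast + \a_{\mu_t}^\ast) \bar d^{(2)}_{j,j}(t) + \tilde r_j(t),
\]
with initial datum $\bar d^{(2)}_{j,j}(0) = q^{(1)}_{j,\infty}(\mu,z_2) \cdot \p_{x_j} \grad \delta_{z_1}$ (bounded in the $(2,\infty)'$ norm) and source
\[
\tilde r_j(t) := \src{\mu}{d^{(1)}_j(\cdot;\mu,z_1)}{d^{(1)}_j(\cdot;\mu,z_2)}(t) - q^{(1)}_{j,\infty}(\mu,z_2) \cdot [\grad, \lin_{\mu_t}^\ast + \a_{\mu_t}^\ast] d^{(1)}_j(t;\mu,z_1).
\]

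Next, I would show that $\tilde r_j(t)$ decays exponentially in $(6,\infty)'$. Using Lemma~\ref{l:d1-bound}, decompose $d^{(1)}_j(t;\mu,z) = q^{(1)}_{j,\infty}(\mu,z) \cdot \grad \delta_{\tilde x^\mu} + R_j(t;z)$ with $\|R_j(t;z)\|_{(4,\infty)'} \le C e^{-\kappa_0 t}$; combined with $\mu_t \to \delta_{\tilde x^\mu}$ from Proposition~\ref{pp:CBO-mf-decay}, substituting into the bilinear source $\src{\mu}{\cdot}{\cdot}$ and into the commutator produces cross terms each carrying at least one factor of $R_j$ or $\mu_t - \delta_{\tilde x^\mu}$ (hence exponentially small), plus two constant-in-time pieces. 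These are the leading-order source $\src{\delta_{\tilde x^\mu}}{q^{(1)}_{j,\infty}(\mu,z_1) \cdot \grad \delta_{\tilde x^\mu}}{q^{(1)}_{j,\infty}(\mu,z_2) \cdot \grad \delta_{\tilde x^\mu}}$ and the commutator at the fixed point $q^{(1)}_{j,\infty}(\mu,z_2) \cdot [\grad, \lin_{\delta_{\tilde x^\mu}}^\ast + \a_{\delta_{\tilde x^\mu}}^\ast](q^{(1)}_{j,\infty}(\mu,z_1) \cdot \grad \delta_{\tilde x^\mu})$. By inserting the explicit expressions for $\frac{\delta b}{\delta m}, \frac{\delta a}{\delta m}, \frac{\delta^2 b}{\delta m^2}, \frac{\delta^2 a}{\delta m^2}$ from the preamble, these two pieces cancel exactly, leaving $\|\tilde r_j(t)\|_{(6,\infty)'} \le C e^{-\kappa t}$.

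With $\bar d^{(2)}_{j,j}$ driven by a bounded initial datum and an exponentially small source in $(6,\infty)'$ (but only uniformly bounded in $(4,\infty)'$), apply the L-FPE decay machinery of Section~\ref{s:core-est} (the authors' analog of Proposition~4.8 in~\cite{DelarueTse2021}). In the weaker $(6,\infty)'$ norm, Duhamel's formula against the propagator of $\lin^\ast + \a^\ast$ delivers a uniformly bounded solution, and the same translation-mode projection argument underlying Lemma~\ref{l:d1-bound} identifies $q^{(2)}_{j,\infty}(\mu,z_1,z_2)$ as the coefficient of $\grad \delta_{\tilde x^\mu}$ in the asymptotic limit and yields the exponential rate $e^{-\kappa_0' t}$. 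In the stronger $(4,\infty)'$ norm, $\tilde r_j$ is only uniformly bounded, so Duhamel integration produces at most the $Ct$ growth asserted in the lemma.

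The main obstacle is the cancellation in the second paragraph: establishing that the fixed-point limits of $\src{\delta_{\tilde x^\mu}}{\cdot}{\cdot}$ and $[\grad, \lin_{\delta_{\tilde x^\mu}}^\ast + \a_{\delta_{\tilde x^\mu}}^\ast]$ exactly match on the translation modes. This is a nontrivial algebraic identity among the first- and second-order linear functional derivatives of $b$ and $a$ at the degenerate measure $\delta_{\tilde x^\mu}$, and it is precisely this identity that forces the asymmetric choice of correction (the factor $q^{(1)}_{j,\infty}(\mu,z_2)$ is paired with $\grad d^{(1)}_j(\cdot;\mu,z_1)$ rather than symmetrized in $z_1,z_2$). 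Verifying it requires a careful unfolding of the defining formulas in the preamble and fully leverages the bounded-support structure provided by $\phi$.
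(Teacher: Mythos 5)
Your plan follows essentially the same architecture as the paper's: write $\bar d^{(2)}_{j,j}$ as the solution of an L\nobreakdash-FPE whose source, after subtracting $q^{(1)}_{j,\infty}(\mu,z_2)\cdot\grad d^{(1)}_j(\cdot;\mu,z_1)$, is produced by a commutator plus the bilinear $\src{\cdot}{\cdot}{\cdot}$ source; then show that, after expanding each $d^{(1)}_j$ via Lemma~\ref{l:d1-bound}, the non-decaying pieces cancel, leaving a source that decays exponentially in the weak norm but is only bounded in the stronger norm. This is precisely the template the paper runs, reusing the algebra from the proof of Lemma~\ref{l:m2-bound}.

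However, there is a genuine gap in your final step, and it is not where you flagged it. You correctly note that in $(4,\infty)'$ the source is merely bounded, giving $\norm{\bar d^{(2)}_{j,j}(t)}_{(4,\infty)'}\le Ct$. You then invoke ``the L\nobreakdash-FPE decay machinery of Section~\ref{s:core-est}'' and ``the same translation-mode projection argument underlying Lemma~\ref{l:d1-bound}'' to deduce the exponential estimate in $(6,\infty)'$. But that machinery (Lemma~\ref{lm:erg-at-delta}) is stated for the L\nobreakdash-FPE at the invariant measure $\delta_{\tilde x^\mu}$, and the reduction from $\mu$ to $\delta_{\tilde x^\mu}$ via Lemma~\ref{lm:connect-r1-q} produces an \emph{extra} source $r^1_t=(L_{\mu_t}-L_{\delta_{\tilde x^\mu}})q_t$ whose dual norm is controlled by $e^{-\kappa t}\norm{q_t}_{(4,\infty)'}\lesssim t\,e^{-\kappa t}$, not $e^{-\kappa t}$. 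In the proof of Lemma~\ref{l:d1-bound} this quantity is $O(e^{-\kappa t})$ because $q_t$ is uniformly bounded; here it is not, and the $t\,e^{-\kappa t}$ growth must be explicitly accounted for. The paper closes this loop by observing that $t\,e^{-\kappa t}$ is still asymptotically exponential (at some rate $\kappa_0'<\kappa_0$), which is exactly why the conclusion carries a possibly different rate $\kappa_0'$. Your write-up treats the $Ct$ bound and the exponential bound as two independent facts obtained from the same Duhamel argument in two norms, without noticing that the first feeds into the second; if you apply Lemma~\ref{lm:erg-at-delta} as literally stated (which requires an $e^{-\lambda_0 t}$ bound on the remainder), the hypothesis is violated and the argument breaks.

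A secondary, more cosmetic point: you present the exponential decay of the source as a single cancellation between the bilinear source at the fixed point and the commutator at the fixed point. In the paper's computation for Lemma~\ref{l:m2-bound} (which you are transplanting by substitution) the cancellations are distributed across several of the pieces $L_1,\dots,L_{12}$ at general time $t$, driven by the expansion $q_2(t)=\tilde q_2(t)+q_\infty\cdot\grad\mu_t+q_\infty\cdot\grad(\delta_{\tilde x^\mu}-\mu_t)$ rather than by evaluating both at $\delta_{\tilde x^\mu}$; the asymmetry you flag (the $z_2$ factor paired with $\grad d^{(1)}_j(\cdot;\mu,z_1)$) enters precisely through this expansion. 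Your ``one symmetric term versus one asymmetric term'' picture does not quite match the actual ledger, so the claimed exact cancellation would need to be recast in the paper's more granular form before it can be verified.
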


\begin{remark}
	\label{rm:q-infty-bound}
    The above lemmata also tell that there exists some constant $C$ such that 
    \begin{equation*}
        \sup_{j,\mu,z} \abs{q^{(1)}_{j,\infty}(\mu,z)} \le C \,, \qquad \sup_{j,\mu,z_1,z_2} \abs{q^{(2)}_{j,\infty}(\mu,z_1,z_2)} \le C \,.
    \end{equation*}
\end{remark}

\subsection{Proof of the main theorem}
\label{s:proof-main}

We now apply the previous key lemmata to prove our main results.

\begin{proof}[Proof of Theorem~\ref{t:main}]

    We need to show that 
    \begin{equation*}
        \abs{ \frac{\delta^2 \cU}{\delta m^2} (t,\mu,z_1,z_2) } \le C'
    \end{equation*}
    and 
    \begin{equation*}
        \abs{ \p_{(z_1)_j} \p_{(z_2)_j} \frac{\delta^2 \cU}{\delta m^2} (t,\mu,z_1,z_2) } \le C' e^{-\kappa_0' t}
    \end{equation*}
    uniformly for all $t \ge 0$, $\mu \in \prob(B(c_0, 2r_\cut))$, and $z_1, z_2 \in B(c_0, 2r_\cut)$.
    
    Recall that $\norm{\mu_t - \delta_{\tilde x^\mu}}_{(1,\infty)'}$ admits exponential decay in $t$, due to Proposition~\ref{pp:CBO-mf-decay}, so Lemmata~\ref{l:m1-bound}-\ref{l:d2-bound} hold also when $\delta_{\tilde x^\mu}$ is replaced by $\mu_t$. 
    Explicitly, 
    \begin{equation*}
        \norm{m^{(1)}(t;\mu,\delta_z) - q^{(1)}_{0,\infty}(\mu,z) \cdot \grad \mu_t }_{(4,\infty)'} \le C e^{-\kappa_0 t} \,, 
    \end{equation*}
    \begin{equation*}
        \norm{d^{(1)}_j(t;\mu,z) - q^{(1)}_{j,\infty}(\mu,z) \cdot \grad \mu_t }_{(4,\infty)'} \le C e^{-\kappa_0 t} \,, 
    \end{equation*}
    and 
    \begin{equation*}
        \norm{\bar d^{(2)}_{j,j}(t;\mu,z_1,z_2) - q^{(2)}_{j,\infty}(\mu,z_1,z_2) \cdot \grad \mu_t }_{(6,\infty)'} \le C e^{-\kappa_0' t} \,.
    \end{equation*}
    We denote for short by 
    \begin{equation*}
        \tilde m^{(1)}(t;\mu,\delta_z) \defeq m^{(1)}(t;\mu,\delta_z) - q^{(1)}_{0,\infty}(\mu,z) \cdot \grad \mu_t  \,,
    \end{equation*}
    \begin{equation*}
        \tilde d^{(1)}_j(t;\mu,z) \defeq d^{(1)}_j(t;\mu,z) - q^{(1)}_{j,\infty}(\mu,z) \cdot \grad \mu_t  \,,
    \end{equation*}
    and 
    \begin{equation*}
        \tilde d^{(2)}_{i,j}(t;\mu,z_1,z_2) \defeq \bar d^{(2)}_{j,j}(t;\mu,z_1,z_2) - q^{(2)}_{j,\infty}(\mu,z_1,z_2) \cdot \grad \mu_t 
    \end{equation*}
    accordingly.

    We adopt Lemma 4.11 of~\cite{DelarueTse2021} to see that,
    \begin{equation}
        \frac{\delta \Phi}{\delta m}(\mu) (\pa_{x_j} \mu) = 0 \,, \qquad \frac{\delta^2 \Phi}{\delta m^2}(\mu) (\pa_{x_j} \mu, q) + \frac{\delta \Phi}{\delta m}(\mu)(\pa_{x_j} q) = 0 
    \end{equation}
    for all $j \in [d]$, $\mu \in \prob(\R^d)$, and $q \in (W^{1,\infty})'$ such that $\ip{q,1} = 0$.
    Then, similar to Proposition 4.12, \cite{DelarueTse2021}, we have 
    \begin{align*}
        & \frac{\delta^2 \cU}{\delta m^2} (t,\mu,z_1,z_2) \\
        & \qquad = \frac{\delta^2 \Phi}{\delta m^2}(\mu_t) (\tilde m^{(1)}(t;\mu,\delta_{z_1}), m^{(1)}(t;\mu,\delta_{z_2})) + \frac{\delta \Phi}{\delta m} (\mu_t) (\bar m^{(2)} (t;\mu,\delta_{z_1},\delta_{z_2})) \,.
    \end{align*}
    Applying Lemmata~\ref{l:m1-bound} and~\ref{l:m2-bound} to the two components respectively gives 
    \begin{align*}
        & \abs{\frac{\delta^2 \Phi}{\delta m^2}(\mu_t) (\tilde m^{(1)}(t;\mu,\delta_{z_1}), m^{(1)}(t;\mu,\delta_{z_2}))} \\
        & \le \norm{\frac{\delta^2 \Phi}{\delta m^2}(\mu_t) }_{6,\infty} \norm{\tilde m^{(1)}(t;\mu,\delta_{z_1})}_{(4,\infty)'} \norm{m^{(1)}(t;\mu,\delta_{z_2})}_{(2,\infty)'} \\
        & \le C_\Phi C^2 
    \end{align*}
    and 
    \begin{align*}
        & \abs{\frac{\delta \Phi}{\delta m} (\mu_t) (\bar m^{(2)}(t;\mu,\delta_{z_1},\delta_{z_2}))} \\
        & \le \norm{\frac{\delta \Phi}{\delta m} (\mu_t)}_{6,\infty} \norm{ \bar m^{(2)}(t;\mu,\delta_{z_1},\delta_{z_2}) }_{(6,\infty)'} \le C_\Phi C \,.
    \end{align*}
    So
    \begin{equation*}
        \abs{ \frac{\delta^2 \cU}{\delta m^2} (t,\mu,z_1,z_2) } \le C_\Phi C (1+C) 
    \end{equation*}
    for all $t \ge 0$. 
    Plugging into~\eqref{e:static-bounded} gives 
    \begin{equation*}
        \abs{\E[\ufn(t,\nu^N_0) - \ufn(t,\nu_{\init})]} \le \frac{C'}{N} \,.
    \end{equation*}
    
    Analogously, we apply Proposition 4.12, \cite{DelarueTse2021}, and~\eqref{e:phi-translate-inv} again to see that 
    \begin{align*}
        & \p_{(z_1)_j} \p_{(z_2)_j} \frac{\delta^2 \cU}{\delta m^2} (t,\mu,z_1,z_2) \\
        & \qquad = \frac{\delta^2 \Phi}{\delta m^2}(\mu_t) (\tilde d^{(1)}_j (t;\mu,{z_1}), d^{(1)}_j (t;\mu,{z_2})) + \frac{\delta \Phi}{\delta m} (\mu_t) (\bar d^{(2)}_{j,j} (t;\mu,{z_1},{z_2})) \\
        & \qquad = \frac{\delta^2 \Phi}{\delta m^2}(\mu_t) (\tilde d^{(1)}_j (t;\mu,{z_1}), d^{(1)}_j (t;\mu,{z_2})) + \frac{\delta \Phi}{\delta m} (\mu_t) (\tilde d^{(2)}_{j,j} (t;\mu,{z_1},{z_2})) \,.
    \end{align*}
    With the same adjustment as above applied to Lemmata~\ref{l:d1-bound} and~\ref{l:d2-bound}, we have
    \begin{align*}
        & \abs{\frac{\delta^2 \Phi}{\delta m^2}(\mu_t) (\tilde d^{(1)}_j(t;\mu,{z_1}), d^{(1)}_j(t;\mu,{z_2}))} \\
        & \le \norm{\frac{\delta^2 \Phi}{\delta m^2}(\mu_t) }_{6,\infty} \norm{\tilde d^{(1)}_j(t;\mu,{z_1})}_{(4,\infty)'} \norm{d^{(1)}_j(t;\mu,{z_2})}_{(2,\infty)'} \\
        & \le C_\Phi C^2 e^{-2\kappa_0 t}
    \end{align*}
    and 
    \begin{align*}
        & \abs{\frac{\delta \Phi}{\delta m} (\mu_t) (\tilde d^{(2)}_{j,j}(t;\mu,{z_1},{z_2}))} \\
        & \le \norm{\frac{\delta \Phi}{\delta m} (\mu_t)}_{6,\infty} \norm{ \tilde d^{(2)}_{j,j}(t;\mu,{z_1},{z_2}) }_{(6,\infty)'} \le C_\Phi C e^{-\kappa_0' t} \,.
    \end{align*}
    So 
    \begin{equation*}
        \abs{ \p_{(z_1)_j} \p_{(z_2)_j} \frac{\delta^2 \cU}{\delta m^2} (t,\mu,z_1,z_2) } \le C_\Phi C (1+C) e^{-\kappa_0' t} 
    \end{equation*}
    for all $t \ge 0$. 
    Plugging into~\eqref{e:need-exp-decay} gives 
    \begin{equation*}
        \abs{ \E[\ufn(0,\nu^N_t) - \ufn(t,\nu^N_0)] } \le \frac{d\sigma^2 16 r_\cut^2 C_\Phi C (1+C)}{2 \kappa_0' N} \le \frac{C'}{N} \,.
    \end{equation*}
    
    Joining the two parts, we hence obtain
    \begin{equation*}
        \sup_{t \ge 0} \abs{ \E[\Phi(\nu^N_t)] - \Phi(\bar\nu_t) } \le \frac{C_\main}{N} \,. 
    \end{equation*}
\end{proof}

Using the main theorem, we may justify the explicit examples.
\begin{proof}[Proof of Corollary~\ref{co:conv-in-t-N}]
    We first show that a function $\Phi: \prob(\R^d) \to \R$ defined by 
    \begin{equation*}
        \Phi(\mu) \defeq \norm{\mu \circ \tau_{\ip{\id,\mu}} - \rho_\refer}_{-s,2}^2 
    \end{equation*}
    satisfies the conditions~\eqref{e:phi-reg-6} and~\eqref{e:phi-translate-inv} in Theorem~\ref{t:main}.

    Observe that, for any $\mu \in \prob(\R^d)$ and $z \in \R^d$,
    \begin{equation*}
        \ip{\id, \mu \circ \tau_z^{-1}} - z = \ip{\id, \mu} \,,
    \end{equation*}
    so $\Phi(\mu \circ \tau_z^{-1})$ is invariant of $z$, which gives~\eqref{e:phi-translate-inv}.

    To show the smoothness, we notice that 
    \begin{align*}
        \Phi(\mu) & = \int (1+\abs{\xi}^2)^{-s} \left[ \paren[\Big]{ \int \cos(2\pi \xi \cdot (x - \ip{\id, \mu})) \mu(dx) - \int \cos(2\pi \xi \cdot x) \rho_\refer(dx) }^2 \right. \\
        & \qquad \left. + \paren[\Big]{ \int \sin(2\pi \xi \cdot (x - \ip{\id, \mu})) \mu(dx) - \int \sin(2\pi \xi \cdot x) \rho_\refer(dx) }^2  \right] d\xi \,,
    \end{align*}
    which leads to 
    \begin{align*}
        \frac{\delta \Phi}{\delta m}(\mu,x) & = \int 2(1+\abs{\xi}^2)^{-s} \left( \int \cos(2\pi \xi \cdot (y-x)) \mu(dy) \right. \\
        & \qquad \qquad - \int \cos(2 \pi \xi \cdot (y-x+\ip{\id, \mu})) \rho_\refer(dy) \\
        & \qquad \left. + 2\pi \xi \cdot x \iint \sin(2\pi\xi \cdot (y_2-y_1+\ip{\id, \mu})) \mu(dy_1) \rho_\refer(dy_2) \right) d\xi \,.
    \end{align*}
    Note that cosine and sine are bounded smooth functions, and $x$ is restricted to $B(c_0, 2r_\cut)$. 
    This implies
    \begin{equation*}
        \sup_\mu \norm{\frac{\delta \Phi}{\delta m}(\mu)}_{6,\infty} \lesssim \int (1+\abs{\xi}^2)^{-s} \abs{\xi}^6 d\xi < \infty \,.
    \end{equation*}
    Similarly, we obtain 
    \begin{equation*}
        \sup_\mu \norm{\frac{\delta^2 \Phi}{\delta m^2}(\mu)}_{6,\infty} \lesssim \int (1+\abs{\xi}^2)^{-s} \abs{\xi}^7 d\xi < \infty \,.
    \end{equation*}
    It thus verifies~\eqref{e:phi-reg-6}.
    
    Now, taking $\rho_\refer = \delta_0$, we see that
    \begin{equation*}
        \E \norm{\nu^N_t \circ \tau_{\ip{\id, \nu^N_t}} - \delta_0}_{-s,2}^2 \le \frac{2C_\main}{N} + 2\norm{\bar\nu_t \circ \tau_{\ip{\id, \bar\nu_t}} - \delta_0}_{-s,2}^2 \,.
    \end{equation*}
    By the definition of Sobolev norm and then triangular inequality, we have
    \begin{align*}
        \norm{\bar\nu_t \circ \tau_{\ip{\id, \bar\nu_t}} - \delta_0}_{-s,2} & = \norm{\bar\nu_t  - \delta_{\ip{\id,\bar\nu_t}} }_{-s,2} \\
        & \le \norm{\bar\nu_t  - \delta_{\tilde x^{\nu_\init}} }_{-s,2} + \norm{\delta_{\ip{\id,\bar\nu_t}} - \delta_{\tilde x^{\nu_\init}} }_{-s,2} \,,
    \end{align*}
    where $\tilde x^{\nu_\init}$ is the limit point given in Proposition~\ref{pp:CBO-mf-decay}.
    Applying the convergence result in Proposition~\ref{pp:CBO-mf-decay}, we know there exists some constant $C > 0$ such that 
    \begin{equation*}
        \norm{\bar\nu_t \circ \tau_{\ip{\id, \bar\nu_t}} - \delta_0}_{-s,2} \le C e^{-\kappa t} \,.
    \end{equation*}
    Therefore, taking $C_{FW} = \max \{2C_\main, 2C^2\} $ leads to 
    \begin{equation*}
        \E \norm{\nu^N_t \circ \tau_{\ip{\id, \nu^N_t}} - \delta_0}_{-s,2}^2 \le C_{FW}  (N^{-1} + e^{-2\kappa t}) \,.
    \end{equation*}
\end{proof}

\begin{proof}[Proof of Corollary~\ref{co:conv-w2}]
    Notice that $\prob(B(c_0, 2r_\cut))$ can be viewed as a subset of $\prob_2(\R^d)$.
    Let $\Phi: \prob(B(c_0, 2r_\cut)) \to \R$ be such that $\Phi(\mu) = \was_2(\mu \circ \tau_{\ip{\id, \mu}}, \delta_0)$. 
    Observe that 
    \begin{equation*}
        \was_2(\mu \circ \tau_{\ip{\id, \mu}}, \delta_0)^2 = \int \abs{x - \ip{\id, \mu}}^2 \mu(dx) \,,
    \end{equation*}
    which is the variance of $\mu$, so it easily verifies~\eqref{e:phi-translate-inv}.
    Also, one may compute that $\frac{\delta \Phi}{\delta m}(\mu,x) = \abs{x - \ip{\id, \mu}}^2$ and $\frac{\delta^2 \Phi}{\delta m^2}(\mu,x,y) = -2 (x-\ip{\id, \mu}) \cdot y$.
    Those are bounded for $\mu \in \prob(B(c_0, 2r_\cut))$ and $x,y \in B(c_0, 2r_\cut)$, which verifies~\eqref{e:phi-reg-6}. 
    Then, applying Theorem~\ref{t:main} and triangular inequality, we have 
    \begin{equation*}
        \E \was_2(\nu^N_t \circ \tau_{\ip{\id, \nu^N_t}}, \delta_0)^2 \le \frac{2C_\main}{N} + 2\was_2(\bar \nu_t \circ \tau_{\ip{\id, \bar \nu_t}}, \delta_0)^2 \,.
    \end{equation*}
    Recall that $\was_2(\bar \nu_t \circ \tau_{\ip{\id, \bar \nu_t}}, \delta_0)^2$ is the variance of $\bar \nu_t$, and thus by Theorem 4.1, \cite{CarriloChoiTotzeckTse2018}, we know 
    \begin{equation*}
        \was_2(\bar \nu_t \circ \tau_{\ip{\id, \bar \nu_t}}, \delta_0)^2 \le e^{-\kappa t} \was_2(\nu_\init \circ \tau_{\ip{\id, \nu_\init}}, \delta_0)^2 \,.
    \end{equation*}
    Hence we conclude that 
    \begin{equation*}
        \E \was_2(\nu^N_t \circ \tau_{\ip{\id, \nu^N_t}}, \delta_0)^2 \le C_W (N^{-1} + e^{-\kappa t}) \,.
    \end{equation*}
\end{proof}

\section{Core estimates for the proofs}
\label{s:core-est}

In this section, we present the essential steps towards Lemmata~\ref{l:m1-bound}-\ref{l:d2-bound}. 
The core of the estimates, those with exponential decay in particular, has the following form,
\begin{equation}
	\label{e:erg-abs}
	\norm{q_t - q_\infty \cdot \grad \delta_{\tilde x}}_{(n,\infty)'} \le C e^{-\kappa_0 t} \,,
	\tag{Erg}
\end{equation}
where $(q_t)_{t \ge 0}$ is the solution to some $\cauchy{\mu}{q_0}{r}$. 
We say $(q_t)_{t \ge 0}$ \emph{satisfies~\eqref{e:erg-abs} at $\mu$} in the above case.

Fix $\mu \in \c_\cbo$ with attracting invariant measure $\mu_\infty = \delta_{\tilde{x}^\mu}$.
The strategy to expand the Sobolev dual norm 
\begin{equation*}
	\norm{q_T}_{(n,\infty)'} = \sup_{\norm{\xi}_{n,\infty} \le 1} \ip{\xi, q_T} = \sup_{\norm{\xi}_{n,\infty}} \int_{B(c_0,2r_\cut)} \xi(x) q_T(dx)
\end{equation*}
is to consider a backward Cauchy problem 
\begin{equation}
	\label{e:backward-cauchy}
	\begin{cases}
		\p_t w_t + \lin_{\mu_t} w_t + \a_{\mu_t} w_t = 0 \,, & t \in [0,T) \,, \\
		w_T = \xi \,,
	\end{cases}
\end{equation}
with solution $w = w^{T,\xi} \in \c([0,T]; W^{n,\infty}(B(c_0,2r_\cut)))$. 
Then, for any $T > 0$ and $\xi \in W^{n,\infty}$, we have 
\begin{equation*}
	\ip{\xi, q_T} = \ip{w^{T,\xi}(0), q_0} + \int_0^T \ip{w^{T,\xi}(t), r_t} dt \,.
\end{equation*}
So the objectives become 
\begin{equation*}
	\norm{q_T}_{(n,\infty)'} = \sup_{\norm{\xi}_{n,\infty} \le 1} \left( \ip{w^{T,\xi}(0), q_0} + \int_0^T \ip{w^{T,\xi}(t), r_t} dt \right) 
\end{equation*}
and similarly
\begin{align}
    \nonumber
	& \norm{q_T - q_\infty \cdot \grad \delta_{\tilde x^\mu}}_{(n+2,\infty)'} = \\ 
    \label{e:normalize-w}
    & \qquad \sup_{\norm{\xi}_{n,\infty} \le 1} \left( \ip{w^{T,\xi}(0), q_0} + \int_0^T \ip{w^{T,\xi}(t), r_t} dt + q_\infty \cdot \grad \xi(\tilde x^\mu) \right) \,.
\end{align}

Thus, the estimates reduce to the properties of the backward equation~\eqref{e:backward-cauchy}.
We study its behaviors in the rest of this section and present the most general result regarding its connection to~\eqref{e:erg-abs}.

\subsection{Analysis of generic L-FPE}

In this subsection, we present some generic results that will lead to the core lemmata in Section~\ref{s:strategy}. 
The proofs of these results are postponed to Section~\ref{s:proof-core}.

Note that the L-FPE $\p_t q_t = L_{\mu_t} q_t + r_t$, where $L_{\mu_t} \defeq \lin_{\mu_t}^\ast + \a_{\mu_t}^\ast$, can be rewritten as 
\begin{equation}
	\label{e:generic-LFPE-reduce}
	\p_t q_t = L_{\delta_{\tilde x^\mu}} q_t + \paren[\big]{ (L_{\mu_t} - L_{\delta_{\tilde x^\mu}}) q_t + r_t } \,.
\end{equation}
Then $(q_t)_{t \ge 0}$ also solves $\cauchy{\delta_{\tilde x^\mu}}{q_0}{((L_{\mu_t} - L_{\delta_{\tilde x^\mu}}) q_t + r_t)_{t \ge 0}}$. 
Thus we shall first consider the case where $\mu = \delta_{\tilde x^\mu}$ by showing that the $q_\infty$ term in~\eqref{e:normalize-w} cancels with the non-exponential components in front of it, so that the overall sum displays an exponential decay. 
Then we reduce the general case to this particular one.

\begin{lemma}[(Erg) at invariant measure]
	\label{lm:erg-at-delta}
	Let $n \in \{2,4\}$. 
	Suppose $\mu_0 = \delta_{\tilde x^\mu}$, and $q_0 \in (W^{n,\infty})'$ is a linear combination of the following terms:
	\begin{equation*}
		-\p_{x_j} \delta_z \,, \qquad \p_{x_j}^2 \delta_z \,, \qquad (\delta_z -\nu) \,,
	\end{equation*}
	with $j \in [d]$, $z \in B(c_0, 2r_\cut)$, $\nu \in \prob(B(c_0, 2r_\cut))$, where $r_\cut$ is the (half) cut-off radius of $\phi$.
	Suppose also that $(r_t)_{t \ge 0}$ has the form 
	\begin{equation*}
		r_t = \grad \cdot R_t \defeq \sum_{i=1}^d \p_{x_i} R^i_t
	\end{equation*}
	for some distributions $R^1_t, \dots, R^d_t \in (W^{n_0,\infty})'$, $n_0 = n+1$, such that for every $i \in [d]$,
	\begin{equation*}
		\norm{R^i_t}_{(n_0,\infty)'} \le K e^{-\lambda_0 t} \,, \qquad \forall t \ge 0
	\end{equation*} 
	for some constants $K, \lambda_0 > 0$. 
	Then there exist constants $C, \kappa_1 > 0$, independent of $(q_t)_{t \ge 0} = \cauchy{\mu}{q_0}{r}$, such that one can find a vector $q_\infty \in \R^d$ with which 
	\begin{equation}
		\label{e:exp-decay-general}
		\norm{q_t - q_\infty \cdot \grad\delta_{\tilde x^\mu} }_{(n+2, \infty)'} \le C (1+K) e^{-\kappa_1 t}
	\end{equation}
	holds for all $t \ge 0$. 
\end{lemma}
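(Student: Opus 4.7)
The plan is to test $q_T$ against an arbitrary $\xi \in W^{n+2,\infty}$ with $\norm{\xi}_{n+2,\infty}\le 1$ and use the duality identity
\[
\ip{\xi, q_T} = \ip{w^{T,\xi}(0), q_0} + \int_0^T \ip{w^{T,\xi}(t), r_t}\, dt,
\]
where $w = w^{T,\xi}$ solves the backward Cauchy problem~\eqref{e:backward-cauchy} driven by the frozen flow $\mu_t \equiv \delta_{\tilde x^\mu}$. My goal will be to rewrite the right-hand side as $q_\infty \cdot \grad \xi(\tilde x^\mu) + O((1+K)e^{-\kappa_1 T})$ with a vector $q_\infty$ depending only on $q_0$ and $(R_t)_{t \ge 0}$; by the representation~\eqref{e:normalize-w} this will immediately deliver~\eqref{e:exp-decay-general} with $\kappa_1 = \min(\lambda, \lambda_0)$ (or a slight perturbation thereof in the critical case $\lambda = \lambda_0$).

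First I would exploit the structural fact that $b(x, \delta_{\tilde x^\mu}) = -\lambda(x-\tilde x^\mu)$ and $a(x, \delta_{\tilde x^\mu}) = \sigma^2\phi(x)^2|x-\tilde x^\mu|^2 I$ both vanish at $\tilde x^\mu$, so $(\lin_{\delta_{\tilde x^\mu}} + \a_{\delta_{\tilde x^\mu}}) f(\tilde x^\mu) = 0$ for every smooth $f$, and $t \mapsto w(t, \tilde x^\mu)$ is conserved along the backward flow with value $\xi(\tilde x^\mu)$. Differentiating the backward equation in $x_i$ produces the additional $-\lambda\, \p_{x_i} w$ term (from $\p_{x_i}[-\lambda(x-\tilde x^\mu)\cdot \grad w]$) flagged in the introduction, together with extra forcing proportional to $\grad w$ evaluated at $\tilde x^\mu$ coming from differentiating the coefficients of $\a$. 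Combining Feynman--Kac for the resulting equation with the cutoff-GBM analysis promised in the introduction, and bootstrapping between the equations for $w$, $\grad w$ and $\grad^2 w$, I would establish bounds of the shape
\[
\sup_{x \in B(c_0,2r_\cut)} \bigl| \p^k_x w(t, x) - \varphi_k^\rem(x) \cdot \grad \xi(\tilde x^\mu) \bigr| \le C e^{-\lambda(T-t)}, \qquad 1 \le \abs{k} \le 2,
\]
for some vector-valued fields $\varphi_k^\rem$ depending only on the coefficients and not on $\xi$ or $T$.

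I would then expand each piece of the duality identity in terms of derivatives of $w$. Because every building block of $q_0$ has zero total mass, its pairing with $w(0)$ only sees derivatives: one gets $\p_{x_j} w(0,z)$ for $-\p_{x_j}\delta_z$, $\p^2_{x_j} w(0,z)$ for $\p^2_{x_j}\delta_z$, and, using $w(0,\tilde x^\mu) = \xi(\tilde x^\mu)$ to center the difference,
\[
\ip{w(0), \delta_z - \nu} = \int_0^1 \! \int \grad w(0, y + s(z-y))\cdot (z-y)\, \nu(dy)\, ds.
\]
On the source side, $\ip{w(t), r_t} = -\ip{\grad w(t), R_t}$. Substituting the decay bound displayed above, each contribution splits into a linear functional of $\grad \xi(\tilde x^\mu)$ plus a remainder controlled by $Ce^{-\lambda T}$ (for the $q_0$ pairings) or by $Ce^{-\lambda(T-t)}\norm{R_t}_{(n_0,\infty)'}$ (for the source). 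Integrating the latter against $\norm{R_t}_{(n_0,\infty)'} \le K e^{-\lambda_0 t}$ yields a global error $CK e^{-\min(\lambda,\lambda_0)T}$, while the limit piece $\int_0^\infty \ip{\varphi_1^\rem, R_t}\, dt$ converges absolutely and will be absorbed into $q_\infty$, leaving only the tail $\int_T^\infty \norm{R_t}_{(n_0,\infty)'}\, dt \le (K/\lambda_0) e^{-\lambda_0 T}$ in the remainder; collecting the contributions of the $\varphi_k^\rem$ tested against the structural atoms of $q_0$ then defines the vector $q_\infty$.

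The main difficulty I expect is the Feynman--Kac analysis for $\grad w$ and $\grad^2 w$ under the degenerate cutoff diffusion $\sigma |x-\tilde x^\mu|\phi(x)$: the volatility vanishes at $\tilde x^\mu$ and is truncated near $\partial B(c_0, 2r_\cut)$, so the accompanying SDE is not a standard geometric Brownian motion and the sensitivities with respect to the initial condition are delicate. Establishing existence of the limit fields $\varphi_k^\rem$ and the uniform rate $\lambda$ in the exponential estimates hinges precisely on this non-standard GBM analysis, whose detailed execution is deferred to Section~\ref{s:proof-core}.
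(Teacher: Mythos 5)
Your proposal follows essentially the same route as the paper's proof: testing $q_T$ against $\xi$ via the duality identity, solving the backward Cauchy problem with frozen flow $\delta_{\tilde x^\mu}$, applying Feynman--Kac, using cutoff-GBM sensitivity estimates (Lemma~\ref{lm:prop-gbm}) to show the derivatives $\p_{x_i}w,\ \p_{x_i}^2 w$ decay modulo remainder fields $\varphi^\rem$, and collecting the non-decaying contributions tested against the atoms of $q_0$ together with the absolutely convergent source integral into the vector $q_\infty$. One small refinement: the closure of the backward system for $\grad w$ relies specifically on the conservation law $\grad w(t,\tilde x^\mu)=\grad\xi(\tilde x^\mu)$ (obtained by differentiating the PDE and evaluating at $\tilde x^\mu$), not merely on the conservation of $w(t,\tilde x^\mu)$ which you emphasize; also, the GBM lemma delivers decay rate $\lambda/2$ rather than $\lambda$, so the paper takes $\kappa_1=\lambda_0$ after assuming WLOG $2\lambda_0\le\lambda$ rather than $\kappa_1=\min(\lambda,\lambda_0)$.
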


\begin{remark}
	The above lemma actually shows the~\eqref{e:erg-abs} property of $m^{(1)}(\cdot;\delta_{\tilde x^\mu},z)$ and $d^{(1)}_i(\cdot;\delta_{\tilde x^\mu},z)$. 
\end{remark}

The next step is to verify that the remainder terms in~\eqref{e:generic-LFPE-reduce}, including both $r^1_t \defeq (L_{\mu_t} - L_{\delta_{\tilde x^\mu}}) q_t$ and $r_t$, fulfill the pre-conditions of Lemma~\ref{lm:erg-at-delta}.
Note that $r$ is defined for particular $q$, so we first look at $r^1$, where $r^1_t = (L_{\mu_t} - L_{\delta_{\tilde x^\mu}}) q_t$. 
The following two lemmata present the relation between $r^1_t$ and $q_t$, and then the decay of $r^1_t$ due to the uniform bound on $q_t$. 

\begin{lemma}
	\label{lm:connect-r1-q}
	Let $n \in \{2,4\}$. 
	Suppose $q = \cauchy{\mu}{q_0}{r} \in \bigcap_{T>0} L^\infty([0,T]; (W^{n,\infty})')$ with $\mu \in \c_\cbo$. 
	Define $r^1_t = (L_{\mu_t} - L_{\delta_{\tilde x^\mu}}) q_t$ for $t \ge 0$. 
	Then one can express $r^1_t$ as 
	\begin{equation*}
		r^1_t = - \sum_{i=1}^d \p_{x_i} R^{1,i}_t 
	\end{equation*}
	for some $R^{1,i} \in \bigcap_{T>0} L^\infty([0,T]; (W^{n+1,\infty})')$, $i = 1,\dots,d$, and there exists some constant $\bar C > 0$, depending only on $\lambda, \sigma, \alpha, \efn$, and $\phi$, such that 
	\begin{equation*}
		\sup_{i \in [d]} \norm{R^{1,i}_t}_{(n+1,\infty)'} \le \bar C e^{-\kappa t} \norm{q_t}_{(n,\infty)'} \,, 
	\end{equation*}
	where $\kappa$ is given as in Proposition~\ref{pp:CBO-mf-decay}.
\end{lemma}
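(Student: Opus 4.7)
The plan is to expand $r^1_t = (L_{\mu_t} - L_{\delta_{\tilde x^\mu}}) q_t$ into its $\lin^*$- and $\a^*$-contributions, noting that each of $\lin_\mu^*$ and $\a_\mu^*$ is already in second-order divergence form as a forward Kolmogorov operator. The exponential decay in $t$ is then extracted by factoring out $M(\mu_t) - \tilde x^\mu$ together with centered moments of $\mu_t$ around $\tilde x^\mu$, all of which decay at rate $\kappa$ by Proposition~\ref{pp:CBO-mf-decay} and the accompanying variance estimate for the mean-field dynamics.

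For the $\lin^*$-part, pairing against a smooth test function $f$ and using the identity $\abs{x - M(\mu_t)}^2 - \abs{x - \tilde x^\mu}^2 = (M(\mu_t) - \tilde x^\mu)^\top(M(\mu_t) + \tilde x^\mu - 2x)$ yields
\[
  \ip{(\lin_{\mu_t} - \lin_{\delta_{\tilde x^\mu}}) f, q_t} = \int \left[ \tfrac{\sigma^2}{2} (M(\mu_t) - \tilde x^\mu)^\top (M(\mu_t) + \tilde x^\mu - 2x) \phi^2 \lap f + \lambda (M(\mu_t) - \tilde x^\mu)^\top \grad f \right] q_t(dx).
\]
Transferring one derivative onto $q_t$ by integration by parts displays this contribution as $-\sum_{i=1}^d \p_{x_i} R^{1,i,\lin}_t$, where each $R^{1,i,\lin}_t$ inherits the prefactor $M(\mu_t) - \tilde x^\mu$ and hence satisfies $\norm{R^{1,i,\lin}_t}_{(n+1,\infty)'} \le C e^{-\kappa t} \norm{q_t}_{(n,\infty)'}$, with $C$ controlled by the $W^{n+1,\infty}$-norms of $\phi^2$ and of the linear polynomials in $x$ on $B(c_0,2r_\cut)$.

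For the $\a^*$-part I would compute the adjoint of~\eqref{e:operator-a} directly. Since $\a_\mu f(x) = K(x,\mu) \cdot V_\mu(f)$ with $K(x,\mu) \defeq (x - M(\mu)) e^{-\alpha \efn(x)} / \ip{e^{-\alpha \efn}, \mu}$ and $V_\mu$ a linear functional of $(\grad f, \lap f)$ integrated against $\mu$, a short calculation produces
\[
  \a_\mu^* q = -\lambda \grad \cdot [\vec c_q(\mu) \mu] - \sigma^2 \lap[\phi^2 \vec c_q(\mu) \cdot (\cdot - M(\mu)) \mu], \qquad \vec c_q(\mu) \defeq \int K(x,\mu) q(dx).
\]
At $\mu = \delta_{\tilde x^\mu}$ the second term vanishes because $(\cdot - \tilde x^\mu)\delta_{\tilde x^\mu}$ is the zero measure. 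Splitting $\vec c_{q_t}(\mu_t) \mu_t - \vec c_{q_t}(\delta_{\tilde x^\mu}) \delta_{\tilde x^\mu} = \vec c_{q_t}(\mu_t)(\mu_t - \delta_{\tilde x^\mu}) + (\vec c_{q_t}(\mu_t) - \vec c_{q_t}(\delta_{\tilde x^\mu}))\delta_{\tilde x^\mu}$ and using the smoothness of $M(\cdot)$ and $\ip{e^{-\alpha \efn}, \cdot}$ in $\mu$ brings every summand into a form with an explicit $e^{-\kappa t}$ prefactor.

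The main obstacle is the residual second-order term $-\sigma^2 \lap[\phi^2 \vec c_{q_t}(\mu_t) \cdot (\cdot - M(\mu_t)) \mu_t]$, which carries no manifest $M(\mu_t) - \tilde x^\mu$ factor. I would extract its decay by the substitution $\cdot - M(\mu_t) = (\cdot - \tilde x^\mu) - (M(\mu_t) - \tilde x^\mu)$: the second summand produces the explicit prefactor, while the first, after integration by parts against a test function $\xi$, tests $\mu_t$ against a smooth integrand vanishing at $\tilde x^\mu$, so its size is controlled by the first and second centered moments of $\mu_t$ around $\tilde x^\mu$ -- both decaying exponentially, the former by Proposition~\ref{pp:CBO-mf-decay} and the latter by the variance argument of~\cite{CarriloChoiTotzeckTse2018} underlying that proposition. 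Combining the $\lin^*$- and $\a^*$-contributions gives $r^1_t = -\sum_i \p_{x_i} R^{1,i}_t$ with the claimed bound $\norm{R^{1,i}_t}_{(n+1,\infty)'} \le \bar C e^{-\kappa t} \norm{q_t}_{(n,\infty)'}$.
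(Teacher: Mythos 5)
Your proposal is correct and follows essentially the same route as the paper: both the $\lin^*$- and $\a^*$-contributions are compared between $\mu_t$ and $\delta_{\tilde x^\mu}$, each resulting difference is displayed with an explicit first-order divergence structure (the $\lap f$ pieces are absorbed by writing $\lap f = \sum_i \p_{x_i}(\p_{x_i} f)$, which is why $R^{1,i}_t$ lands in $(W^{n+1,\infty})'$), and the exponential decay is extracted from the factors $M(\mu_t)-\tilde x^\mu$, $\mu_t - \delta_{\tilde x^\mu}$, and $\int\abs{\cdot - M(\mu_t)}\,d\mu_t$ supplied by Proposition~\ref{pp:CBO-mf-decay}. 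The only organizational difference is that you compute the adjoint $\a_\mu^*$ explicitly and split the measure $\vec c_{q_t}(\mu_t)\mu_t - \vec c_{q_t}(\delta_{\tilde x^\mu})\delta_{\tilde x^\mu}$, whereas the paper stays in the weak formulation and instead splits the $q_t$-integral factor via $x - M(\mu_t) = (x-\tilde x^\mu) + (\tilde x^\mu - M(\mu_t))$ before comparing $\mu_t$ with $\delta_{\tilde x^\mu}$; also, for the residual second-order term you detour through the substitution $\cdot - M(\mu_t) = (\cdot-\tilde x^\mu) - (M(\mu_t)-\tilde x^\mu)$ where the paper directly invokes $\int\abs{x-M(\mu_t)}\,\mu_t(dx)\le Ce^{-\kappa t}$, but the estimates are equivalent.
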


It remains to obtain a uniform upper bound on $\norm{q_t}_{(n,\infty)'}$, which will also prove the first inequalities in Lemmata~\ref{l:m1-bound} and~\ref{l:d1-bound}.
The details are shown in the following lemma. 
\begin{lemma}
	\label{lm:bound-q}
	Let $n \in \{2,4\}$. 
	Suppose $q = \cauchy{\mu}{q_0}{r} \in \bigcap_{T>0} L^\infty([0,T]; (W^{n,\infty})')$ with $\mu \in \c_\cbo$, $q_0 \in (W^{n,\infty})'$ is a linear combination of the following terms:
	\begin{equation*}
		-\p_{x_j} \delta_z \,, \qquad \p_{x_j}^2 \delta_z \,, \qquad (\delta_z -\nu) \,.
	\end{equation*}
	Assume that $(r_t)_{t \ge 0}$ has the form 
	\begin{equation*}
		r_t = \grad \cdot R_t \defeq \sum_{i=1}^d \p_{x_i} R^i_t
	\end{equation*}
	for some distributions $R^1_t, \dots, R^d_t \in (W^{n_0,\infty})'$ such that for every $i \in [d]$,
	\begin{equation*}
		\norm{R^i_t}_{(n_0,\infty)'} \le K e^{-\lambda_0 t} \,, \qquad \forall t \ge 0
	\end{equation*} 
	for some constants $K, \lambda_0 > 0$. 
	Then, there exists some constant $C$, independent of $q_0$ and $\mu$, such that 
	\begin{equation*}
		\sup_{t \ge 0} \norm{q_t}_{(n \lor (1+n_0),\infty)} \le C (1 + K) \,.
	\end{equation*}
\end{lemma}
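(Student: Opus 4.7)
The plan is to carry out the duality argument sketched at the opening of Section~\ref{s:core-est}. Set $m \defeq n \lor (1 + n_0)$ and, for fixed $T > 0$, let $w^{T, \xi}$ solve the backward Cauchy problem~\eqref{e:backward-cauchy} with terminal datum $\xi$ satisfying $\norm{\xi}_{m, \infty} \le 1$. Testing~\eqref{e:l-fpe} against $w^{T, \xi}$ and integrating by parts in time, together with the divergence structure $r_t = \grad \cdot R_t$ (yielding integration by parts in space), produces the identity
\begin{equation*}
\ip{\xi, q_T} = \ip{w^{T, \xi}(0), q_0} - \sum_{i=1}^d \int_0^T \ip{\p_{x_i} w^{T, \xi}(t), R^i_t}\, dt.
\end{equation*}
Each of the three possible summands of $q_0$ pairs against a $W^{n, \infty}$-function through at most second-order spatial derivatives, so the first term on the right is bounded by $C_0 \norm{w^{T, \xi}(0)}_{n, \infty}$ with $C_0$ absolute, while the assumed exponential decay of $\norm{R^i_t}_{(n_0, \infty)'}$ controls the second term by $(dK/\lambda_0)\sup_{t \in [0, T]} \norm{w^{T, \xi}(t)}_{n_0 + 1, \infty}$.

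The lemma therefore reduces to the uniform-in-$T$ regularity estimate
\begin{equation*}
\sup_{T > 0}\; \sup_{t \in [0, T]} \norm{w^{T, \xi}(t)}_{m, \infty} \le C_* \norm{\xi}_{m, \infty}
\end{equation*}
with $C_*$ independent of $\xi$ and of the particular $\mu \in \c_\cbo$. Once this is in hand, taking the supremum over $\norm{\xi}_{m, \infty} \le 1$ of the duality identity yields $\norm{q_T}_{(m, \infty)'} \le C_0 C_* + d C_* K / \lambda_0 \le C(1 + K)$, uniformly in $T$, which is the conclusion of the lemma.

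To obtain the regularity estimate, I would use a Feynman-Kac/Duhamel representation of $w^{T, \xi}$: the local operator $\lin_{\mu_t}$ is the generator of the SDE with drift $-\lambda(x - M(\mu_s))$ and volatility $\sigma \abs{x - M(\mu_s)} \phi(x)$ started at $(t, x)$, while $\a_{\mu_t} w$ factors as a bounded local multiplier times the scalar average of $w$ and its first two derivatives against $\mu_t$ (as visible from~\eqref{e:operator-a}) and can be treated as a Duhamel source. Proposition~\ref{pp:CBO-restrict} confines trajectories to $B(c_0, 2r_\cut)$, and the cutoff $\phi$ together with the strict positivity of $e^{-\alpha \efn}$ on this compact set make all coefficients, all kernels, and all their spatial derivatives uniformly bounded, uniformly in $\mu \in \c_\cbo$. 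Differentiating the representation in $x$ up to order $m$ expresses $\p^k w^{T, \xi}$ through the tangent flow $\grad_x Y^{t, x}_s$ and its higher iterates, which are contracted by the factor $e^{-\lambda(s - t)}$ coming from the linear restoring drift; a top-down induction on $k$ from $0$ up to $m$ then delivers the desired $T$-free estimate.

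The principal technical obstacle is taming the commutators of $\p^k$ with $\lin_{\mu_t} + \a_{\mu_t}$: the diffusion coefficient $\abs{x - M(\mu_t)}^2 \phi(x)^2$ degenerates at $x = M(\mu_t)$, and $\a_{\mu_t}$ couples all derivative orders through its global average. The Duhamel integrations of the lower-order terms produced by these commutators converge uniformly in $T$ only when the contraction rate $\lambda$ dominates their (bounded) amplitudes, which is exactly the regime already enforced by Proposition~\ref{pp:CBO-mf-decay}. The detailed induction is the content of Section~\ref{s:proof-core}, which this lemma packages into the duality identity above.
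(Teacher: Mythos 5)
Your opening is sound and matches the paper: for $m \defeq n \lor (1 + n_0)$ you correctly derive the duality identity
\begin{equation*}
\ip{\xi, q_T} = \ip{w^{T,\xi}(0), q_0} - \sum_{i=1}^d \int_0^T \ip{\p_{x_i} w^{T,\xi}(t), R^i_t}\, dt,
\end{equation*}
and you correctly reduce the lemma to a uniform-in-$T$ regularity estimate on $w^{T,\xi}$. The paper also uses the Feynman--Kac representation with the process $Y^{t,x,\mu}$. Up to this point the routes coincide.

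The gap is in how you propose to close the implicit nature of $\a_{\mu_t}$. You acknowledge that $\a_{\mu_t} w$ is a bounded local factor times the vector $L_t \defeq \ip{\lambda \grad w(t) - \sigma^2 (\cdot - M(\mu_t)) \phi^2 \lap w(t), \mu_t}$, which is a nonlocal functional of the unknown $w$ itself. You then treat this as a Duhamel source and claim a ``top-down induction on $k$ from $0$ up to $m$'' together with the $e^{-\lambda (s-t)}$ contraction of the tangent flow closes the estimate. This cannot work as stated: $L_t$ already involves first and second spatial derivatives of $w$, so the source for the $k=0$ estimate depends on the $k=1,2$ levels and there is no base case from which to induct. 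More importantly, even a Grönwall closure of the naive form
\begin{equation*}
\norm{w(t)}_{2,\infty} \le C_1 e^{-\lambda(T-t)} \norm{\xi}_{2,\infty} + C_2 \int_t^T e^{-\lambda(u-t)} \abs{L_u}\, du, \qquad \abs{L_u} \le C_3 \norm{w(u)}_{2,\infty},
\end{equation*}
gives $\exp(C_2 C_3 (T - t))$, which is \emph{not} bounded uniformly in $T$; this is precisely the point on which the argument stands or falls.

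The paper's proof gets around this by isolating a \emph{closed} Volterra recurrence for the finite-dimensional quantity $L_t$, namely $L^i_t = [\text{terminal term}] + \int_t^T B^i_{u,t} L_u\, du$, and then computing the kernel precisely, showing $\abs{B^{i,k}_{u,t}} \le e^{-\lambda(u-t)}(\lambda \one{i=k} + C_0 e^{-\kappa_1 u})$. The crucial feature is the \emph{exact} leading factor $\lambda e^{-\lambda(u-t)}$: integrated over $[t,T]$ it contributes a quantity bounded by $1$ but not strictly less than $1$, so the contraction is marginal; a genuine uniform-in-$T$ bound only comes from the additional \emph{time-decaying} correction $C_0 e^{-\kappa_1 u}$, which after Grönwall yields $\abs{L_t}_1 \le C \exp(C_0 e^{-\kappa_1(T-t)})\norm{\xi}_{2,\infty}$, uniformly bounded. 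Your heuristic that ``the contraction rate $\lambda$ dominates their bounded amplitudes, which is exactly the regime enforced by Proposition~\ref{pp:CBO-mf-decay}'' does not capture this: Proposition~\ref{pp:CBO-mf-decay} governs the forward decay of $\mu_t$, not the contraction balance in the backward problem, and the balance in the backward problem is not a strict domination but a marginal $\lambda/\lambda$ cancellation rescued only by the exponentially decaying part of the kernel (which in turn depends on the mean-field decay of $\mu_t$ toward $\delta_{\tilde x^\mu}$). Without the $L_t$ recurrence and the refined kernel estimate, your outline does not yield a $T$-free bound.
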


\subsection{Proof of Lemmata~\ref{l:m1-bound} and~\ref{l:d1-bound}}
\label{s:prf-1st-order}

With the above results, we are able to proceed to the core lemmata.
In this subsection, we present the proofs for the estimates on the first-order terms $m^{(1)}$ and $d^{(1)}$.

\begin{proof}[Proof of Lemma~\ref{l:m1-bound}]
	Recall that $m^{(1)}(\cdot;\mu,\delta_z) = \cauchy{\mu}{\delta_z - \mu}{0}$.
	Applying Lemma~\ref{lm:bound-q} with $R^i_t = 0$ (so $n_0=0$) for all $i \in [d]$ gives 
	\begin{equation*}
		\sup_{t \ge 0} \norm{m^{(1)}(t;\mu,\delta_z)}_{(2,\infty)'} \le C \,,
	\end{equation*}
	where $C$ is independent of $\mu$ and $z$, which proves the first inequality of the Lemma. 

	Joining Lemma~\ref{lm:bound-q} and Lemma~\ref{lm:connect-r1-q}, we see that there is actually an indifferent uniform decay 
	\begin{equation*}
		\norm{R^{1,i}_t}_{(1+n,\infty)'}  \le C \bar C e^{-\kappa t} \,, \qquad t \ge 0 \,.
	\end{equation*}
	Thus the pre-condition of Lemma~\ref{lm:erg-at-delta} is satisfied with $K = C \bar C$. 
	Hence we know there exists some $q^{(1)}_{0,\infty}(\mu,z) \in \R^d$ such that for all $t \ge 0$
	\begin{equation*}
		\norm{ m^{(1)}(t;\mu,\delta_z) - q^{(1)}_{0,\infty}(\mu,z) \cdot \grad \delta_{\tilde x^\mu} }_{(4,\infty)'} \le C e^{-\kappa_0 t} \,.
	\end{equation*}
\end{proof}

\begin{proof}[Proof of Lemma~\ref{l:d1-bound}]
	Recall that $d^{(1)}_j(\cdot;\mu,z) = \cauchy{\mu}{-\p_{x_j} \delta_z}{0}$.
	Apply Lemmata~\ref{lm:erg-at-delta}, \ref{lm:connect-r1-q} and~\ref{lm:bound-q} in the same as in the Proof of Lemma~\ref{l:m1-bound}, we get 
	\begin{equation*}
		\sup_{t \ge 0} \norm{d^{(1)}_j(t;\mu,z)}_{(2,\infty)'} \le C \,,
	\end{equation*}
	and, for all $t \ge 0$,
	\begin{equation*}
		\norm{ d^{(1)}_j(t;\mu,z) - q^{(1)}_{j,\infty}(\mu,z) \cdot \grad \delta_{\tilde x^\mu} }_{(4,\infty)'} \le C e^{-\kappa_0 t} \,.
	\end{equation*}
\end{proof}

\subsection{Proof of Lemmata~\ref{l:m2-bound} and~\ref{l:d2-bound}}
\label{s:prf-2nd-order}

Now we prove the estimates on the second-order terms $m^{(2)}$ and $d^{(2)}$, which are the essential components in the decomposition of the main object of study.
The crux of the proofs is to verify the pre-condition on the remainder terms for Lemmata~\ref{lm:erg-at-delta} and~\ref{lm:bound-q}.
There are numerous computational details in the proof, so we provide a sketch here and attach the full proofs in Appendix~\ref{s:complete-prf}
\begin{proof}[Sketch of proof of Lemma~\ref{l:m2-bound}]
	Fix $\mu \in \c_\cbo$ and $z_1, z_2 \in B(0, 3r_\cut)$.
	Recall that 
	\begin{equation*}
		\bar m^{(2)}(t;\mu,\delta_{z_1},\delta_{z_2}) = m^{(2)}(t;\mu,\delta_{z_1},\delta_{z_2}) - q^{(1)}_{0,\infty}(\mu,z_2) \cdot \grad m^{(1)}(t;\mu,\delta_{z_1}) \,,
	\end{equation*}
	where $m^{(2)}(\cdot;\mu,\delta_{z_1},\delta_{z_2}) = \cauchy{\mu}{\mu_0 - \delta_{z_2}}{(r_t)_{t \ge 0}}$ with 
	\begin{equation*}
		(r_t)_{t \ge 0} = \src{\mu}{m^{(1)}(\cdot;\mu,\delta_{z_1})}{m^{(1)}(\cdot;\mu,\delta_{z_2})} \,.
	\end{equation*}

	We may view $\bar m^{(2)}$ as also a solution to some L-FPE 
	\begin{equation*}
		\p_t \bar m^{(2)}(t;\mu,\delta_{z_1},\delta_{z_2}) = (\lin_{\mu_t}^\ast + \a_{\mu_t}^\ast) \bar m^{(2)}(t;\mu,\delta_{z_1},\delta_{z_2}) + \bar r(t;\mu,\delta_{z_1},\delta_{z_2}) \,,  
	\end{equation*}
	with $\bar r$ defined by 
	\begin{equation*}
		\bar r(t) \defeq r(t) + q^{(1)}_{0,\infty}(\mu,z_2) \cdot \paren[\Big]{ (\lin_{\mu_t}^\ast + \a_{\mu_t}^\ast) \grad m^{(1)}(t;\mu,\delta_{z_1}) - \grad (\lin_{\mu_t}^\ast + \a_{\mu_t}^\ast) m^{(1)}(t;\mu,\delta_{z_1}) } \,.
	\end{equation*}
	We show that $\bar r$ can be expressed as 
	\begin{equation*}
		\ip{f, \bar r(t)} = \sum_{i=1}^d \ip{g_i, \bar R^i(t)} 
	\end{equation*}
	for some $R^i(t)$, $i \in [d]$, such that $\norm{R^i(t)}_{(5,\infty)'} \le K e^{-\kappa_0 t}$ for all $i \in [d]$ and $t \ge 0$. 
	Then the pre-condition of Lemma~\ref{lm:bound-q} on $r$ is verified. 
	Notice that 
	\begin{equation*}
		\bar m^{(2)}(0;\mu,\delta_{z_1},\delta_{z_2}) = (\mu_0 - \delta_{z_2}) - q^{(1)}_{0,\infty}(\mu,z_2) \cdot \grad (\delta_{z_1} - \mu_0)) \,.
	\end{equation*}
	A slight generalization of Lemma~\ref{lm:bound-q} gives the overall bound 
	\begin{equation*}
		\norm{\bar m^{(2)}(t;\mu,\delta_{z_1},\delta_{z_2})}_{(6,\infty)'} \le C(1+K) \,.
	\end{equation*}
\end{proof}

An analogous strategy leads to Lemma~\ref{l:d2-bound}.

\begin{proof}[Sketch of proof of Lemma~\ref{l:d2-bound}]
	Fix $\mu \in \c_\cbo$, $j \in [d]$, and $z_1, z_2 \in B(c_0, 2r_\cut)$.
	Recall that 
	\begin{equation*}
		\bar d^{(2)}_{j,j}(t;\mu,{z_1},{z_2}) = d^{(2)}_{j,j}(t;\mu,{z_1},_{z_2}) - q^{(1)}_{j,\infty}(\mu,z_2) \cdot \grad d^{(1)}_j(t;\mu,{z_1}) \,,
	\end{equation*}
	where $d^{(2)}_{j,j}(\cdot;\mu,{z_1},{z_2}) = \cauchy{\mu}{0}{(r_t)_{t \ge 0}}$ with 
	\begin{equation*}
		(r_t)_{t \ge 0} = \src{\mu}{d^{(1)}_j(\cdot;\mu,{z_1})}{d^{(1)}_j(\cdot;\mu,{z_2})} \,.
	\end{equation*}
	We aims at proving that
	\begin{equation}
		\label{e:d2-linear-growth}
		\norm{\bar d^{(2)}_{j,j}(t;\mu,z_1,z_2)}_{(4,\infty)'} \le C t  \,, \qquad \forall t \ge 0 \,,
	\end{equation}
	and there exists some $q^{(2)}_{j,\infty}(\mu,z_1,z_2) \in \R^d$ such that 
	\begin{equation}
		\label{e:d2-expdecay}
		\norm{\bar d^{(2)}_{j,j}(t;\mu,z_1,z_2) - q^{(2)}_{j,\infty}(\mu,z_1,z_2) \cdot \grad \delta_0 }_{(6,\infty)'} \le C e^{-\kappa_0' t}
	\end{equation}
	for all $t \ge 0$.

	Analogous to the proof of Lemma~\ref{l:m2-bound}, we may treat $\bar d^{(2)}_{j,j}(\cdot;\mu,{z_1},{z_2})$ as 
	\begin{equation*}
		\cauchy{\mu}{q^{(1)}_{j,\infty}(\mu,z_2) \cdot \grad \p_{x_j} \delta_{z_1}}{\bar r}
	\end{equation*}
	with 
	\begin{equation*}
		\bar r(t) \defeq r(t) + q^{(1)}_{j,\infty}(\mu,z_2) \cdot \paren[\Big]{ (\lin_{\mu_t}^\ast + \a_{\mu_t}^\ast) \grad d^{(1)}_j(t;\mu,{z_1}) - \grad (\lin_{\mu_t}^\ast + \a_{\mu_t}^\ast) d^{(1)}_j(t;\mu,{z_1}) } \,.
	\end{equation*}
	By replacing all $m^{(1)}(\cdot;\mu,\delta_{z_\iota})$ with $d^{(1)}_j(\cdot;\mu,z_\iota)$, $\iota = 1,2$, in the proof of Lemma~\ref{l:m2-bound}, 
	we see that $\bar r$ can be expressed as 
	\begin{equation*}
		\ip{f, \bar r(t)} = \sum_{i=1}^d \ip{g_i, \bar R^i(t)} 
	\end{equation*}
	for some $R^i(t)$, $i \in [d]$, such that 
	\begin{equation}
		\label{e:d2-r-decay}
		\sup_{i \in [d]} \norm{R^i(t)}_{(5,\infty)'} \le K e^{-\kappa_0 t}
	\end{equation}
	and 
	\begin{equation}
		\label{e:d2-r-bounded}
		\sup_{i \in [d]} \norm{R^i(t)}_{(3,\infty)'} \le K 
	\end{equation}
	for all $t \ge 0$.

	Putting~\eqref{e:d2-r-bounded} to the framework of Lemma~\ref{lm:bound-q}, we obtain~\eqref{e:d2-linear-growth}.
	Then, using~\ref{e:d2-r-decay}, we apply Lemmata~\eqref{lm:erg-at-delta} and~\ref{lm:connect-r1-q}, together with~\eqref{e:d2-linear-growth}, which gives us~\eqref{e:d2-expdecay}.
\end{proof}

\section{Proofs for Section~\ref{s:core-est}}
\label{s:proof-core}

\begin{proof}[Proof of Lemma~\ref{lm:erg-at-delta}]
	\step[Expression of $w$]
	Fix $T > 0$, $\xi \in W^{n,\infty}$, and $\tilde x = \tilde x^\mu$.
	Observe that the backward equation~\eqref{e:backward-cauchy} becomes 
	\begin{equation*}
		\begin{cases}
			\p_t w(t,x) + \frac{\sigma^2}{2} \phi(x)^2 \abs{x - \tilde x}^2 \lap w(t,x) - \lambda (x - \tilde x) \cdot \grad w(t,x) \\
            \qquad \qquad + \lambda e^{-\alpha (\efn(x) - \efn(\tilde x))} (x - \tilde x) \cdot \grad w(t,\tilde x) = 0 \,, \\
			w(T,x) = \xi(x) \,.
		\end{cases}
	\end{equation*}
	Taking derivatives of the first equation with respect to every $x_j$ and then plugging in $x = \tilde x$, we obtain explicitly
	\begin{equation*}
		\grad w(t,\tilde x) =  \grad \xi(\tilde x) \,.
	\end{equation*}
	Then it further transforms to a differential equation 
	\begin{equation*}
		\begin{cases}
			\p_t w(t,x) + \lin_{\delta_{\tilde x}} w(t,x) + \lambda e^{-\alpha (\efn(x) - \efn(\tilde x))} (x - \tilde x) \cdot \grad \xi(\tilde x)  = 0 \,, \\
			w(T,x) = \xi(x) \,.
		\end{cases}
	\end{equation*}
	Applying the Feynman-Kac formula, we have 
	\begin{equation*}
		w(t,x) = \E^{t,x} \left[ \xi(S_T) + \int_t^T \lambda e^{-\alpha (\efn(S_u) - \efn(\tilde x))} (S_u - \tilde x) \cdot \grad \xi(0) du \right] \,,
	\end{equation*}
	where $(S_u)_{u \in [t,T]}$ has dynamics $dS_u = -\lambda S_u du + \sigma \abs{S_u} \phi(S_u) dW_u$ with $S_t = x$ under the probability measure $\P^{t,x}$.
    Note that $S_u$ remains in $B(c_0, 2r_\cut)$ for $u \in [t,T]$ as long as $x \in B(c_0, 2r_\cut)$, so $w(t)$ is well-defined as an element of $W^{n,\infty}(B(c_0, 2r_\cut))$.

	We may write the above equation in a more generic form, 
	\begin{equation*}
		w(t,x) = \E [ \xi(S^{t,x}_{T}) ] + \lambda \grad\xi(\tilde x) \cdot \int_t^T  \E \left[G_{\tilde x} (S^{t,x}_u) \right] du \,,
	\end{equation*}
	where $dS^{t,x}_u = -\lambda S^{t,x}_u du + \sigma \abs{S^{t,x}_u} \phi(S^{t,x}_u) dW_u$ for $u \in [t,T]$, with $S^{t,x}_t = x$, and $G_{\tilde x}: y \mapsto (y-\tilde x) e^{-\alpha (\efn(y) - \efn(\tilde x)) }$. 
	We will use the properties of the process $S^{t,x}$ in each case separately. 

	\step[Case $q_0 = -\p_{x_j} \delta_z$]
	Notice that 
	\begin{align*}
		\p_{x_i} w(t,x) & = \E \left[ (\p_{x_i} S^{t,x}_T) \cdot \grad \xi (S^{t,x}_T) \right] \\
		& \qquad + \lambda \grad\xi(0) \cdot \int_t^T  \E \left[ D G_{\tilde x} (S^{t,x}_u) (\p_{x_i} S^{t,x}_u) \right] du \,,
	\end{align*}
    where $D G_{\tilde x}(y)$ is the Jacobian matrix of $G_{\tilde x}$ at $y$. 
    
	Define 
	\begin{equation*}
		\varphi_i^\rem(T_0;t_0,x) \defeq \lambda \int_{t_0}^{T_0} \E \left[  D G_{\tilde x} (S^{t,x}_u) (\p_{x_i} S^{t,x}_u) \right] du \,,
	\end{equation*}
	so that 
	\begin{align*}
		\ip{\xi, q_T} & = \ip{w(0), q_0} + \int_0^T \ip{w(t), r_t} dt \\
		& = \p_{x_j} w(0,z) - \int_0^T \sum_{i=1}^d \ip{\p_{x_i} w(t), R^i_t} dt \\
		& = \E \left[ (\p_{x_j} S^{0,z}_{T}) \cdot \grad \xi (S^{0,z}_{T}) \right] - \int_0^T \sum_{i=1}^d \ip{ \E \left[ (\p_{x_i} S^{t,\cdot}_T) \cdot \grad \xi (S^{t,\cdot}_T) \right], R^i_t } \\
		& \quad + \left( \varphi_j^\rem(T;0,z) - \int_0^T \sum_{i=1}^d \ip{ \varphi_i^\rem(T;t,\cdot), R^i_t } dt \right) \cdot \grad \xi(\tilde x) \,. 
	\end{align*}
	With Lemma~\ref{lm:prop-gbm}, we see that  
	\begin{align*}
		& \abs{\E \left[ (\p_{x_j} S^{0,z}_{T}) \cdot \grad \xi (S^{0,z}_{T}) \right] - \int_0^T \sum_{i=1}^d \ip{ \E \left[ (\p_{x_i} S^{t,\cdot}_T) \cdot \grad \xi (S^{t,\cdot}_T) \right], R^i_t }} \\
		& \le e^{-\lambda T/2} \norm{\grad \xi}_\infty + \int_0^T \sum_{i=1}^d \norm{\E \left[ (\p_{x_i} S^{t,\cdot}_T) \cdot \grad \xi (S^{t,\cdot}_T) \right]}_{n_0,\infty} \norm{R^i_t}_{(n_0,\infty)'} dt \\
		& \le e^{-\lambda T/2} \norm{\xi}_{1,\infty} + \int_0^T d C_0 e^{-\lambda (T-t)/2} K e^{-\lambda_0 t} \norm{\p_{x_i} \xi}_{n_0,\infty} dt \\
		& \le C e^{-\lambda_0 T} \norm{\xi}_{1+n_0,\infty} \,,
	\end{align*}
	where we may assume without loss of generality that $2\lambda_0 \le \lambda$.

	For the remaining terms, we first notice that the integrand in the definition of $\varphi^\rem$ is independent of $T_0$ and $\xi$. 
	So for $T_1 < T_2$ we have 
	\begin{equation*}
		\varphi_i^\rem(T_2; t,x) - \varphi_i^\rem(T_1; t,x) = \lambda \int_{T_1}^{T_2} \E \left[ D G_{\tilde x} (S^{t,x}_u) (\p_{x_i} S^{t,x}_u) \right] du \,,
	\end{equation*}
	which goes to $0$ in the rate $e^{-\lambda  (T_1-t)/2}$ as $T_1, T_2 \to \infty$, due to Lemma~\ref{lm:prop-gbm} and the boundedness of function $x \mapsto x e^{-\alpha \efn(x)}$.
	This means $\{\varphi_i^\rem(T_0;t,x)\}_{T_0 \ge t}$ is a Cauchy sequence and is thus convergent. 
	We denote the limit by $\varphi_{i,\infty}^\rem(t,x)$. 

	Now we define 
	\begin{equation*}
		\gamma_j^\rem(T_0; z) \defeq \varphi_j^\rem(T_0;0,z) - \int_0^{T_0} \sum_{i=1}^d \ip{ \varphi_i^\rem(T_0;t,\cdot), R^i_t } dt
	\end{equation*}
	Similarly, for $T_1 < T_2$, we have 
	\begin{align*}
		\gamma_j^\rem(T_2;z) - \gamma_j^\rem(T_1;z) & = \varphi_j^\rem(T_2;0,z) - \varphi_j^\rem(T_1;0,z) \\
		& \quad + \int_0^{T_1} \sum_{i=1}^d \ip{ \varphi_i^\rem(T_2;t,\cdot) - \varphi_i^\rem(T_1;t,\cdot), R^i_t } dt \\
		& \quad + \int_{T_1}^{T_2} \sum_{i=1}^d \ip{\varphi_i^\rem(T_2;t,\cdot), R^i_t} dt
	\end{align*}
	Using the convergence of $\varphi_i^\rem$ and also Lemma~\ref{lm:prop-gbm}, we get 
	\begin{equation*}
		\abs{ \gamma_j^\rem(T_2;z) - \gamma_j^\rem(T_1;z) } \le CK e^{-\lambda_0 T_1} \to 0 
	\end{equation*}
	as $T_1, T_2 \to \infty$. 
	Thus there exists some $\gamma_{j,\infty}^\rem(z)$ such that 
	\begin{equation*}
		\abs{ \gamma_j^\rem(T_0;z)  - \gamma_{j,\infty}^\rem(z) } \le CK e^{-\lambda_0 T_0} \,.
	\end{equation*}
	Plugging in back to the decomposition of $\ip{\xi, q_T}$, we conclude that 
	\begin{equation*}
		\ip{\xi, q_T - \gamma_{j,\infty}^\rem(z) \cdot \grad \delta_{\tilde x}} \le C e^{-\lambda_0 T} \norm{\xi}_{1+n_0,\infty} \,.
	\end{equation*}
	Taking $q_\infty = \gamma_{j,\infty}^\rem(z)$ gives~\eqref{e:exp-decay-general} for $q_0 = -\p_{x_j} \delta_z$ with $\kappa_1 = \lambda_0$. 

	\step[Case $q_0 = \p_{x_j}^2 \delta_z$]
	The analysis is analogous, albeit the first term $\ip{w(0), q_0}$ now becomes $\p_{x_j}^2 w(0,z)$. 
	Notice that 
	\begin{align*}
		& \p_{x_j}^2 w(t,x) = \E \left[ (\p_{x_j}^2 S^{t,x}_T) \cdot \grad \xi(S^{t,x}_T) + (\p_{x_j} S^{t,x}_T)^\top D^2 \xi(S^{t,x}_T) (\p_{x_j} S^{t,x}_T) \right] + \lambda \grad\xi(\tilde x) \cdot \\
		& \qquad \int_t^T  \E \left[ DG_{\tilde x} (S^{t,x}_u) (\p_{x_j}^2 S^{t,x}_u)   + (\p_{x_j} S^{t,x}_u)^\top D^2 G_{\tilde x} (S^{t,x}_u) (\p_{x_j} S^{t,x}_u) \right] du 
	\end{align*}
	Lemma~\ref{lm:prop-gbm} guarantees the corresponding exponential decay of $\p_{x_j}^2 S^{t,x}_T$ and $\abs{\p_{x_j} S^{t,x}_T}^2$. 
	In addition, the remainder term becomes 
	\begin{align*}
		& \gamma_j^\rem(T_0;z) = \\
		& \qquad \int_0^T  \E \left[ DG_{\tilde x} (S^{0,x}_t) (\p_{x_j}^2 S^{0,x}_t) + (\p_{x_j} S^{0,x}_t)^\top D^2 G_{\tilde x} (S^{0,x}_t) (\p_{x_j} S^{0,x}_t) \right] dt \\
		& \qquad - \int_0^{T} \sum_{i=1}^d \ip{ \varphi_i^\rem(T_0;t,\cdot), R^i_t } dt \,.
	\end{align*}
	Its boundedness and convergence follows from the analysis above. 
	Thus we also have~\eqref{e:exp-decay-general}.

	\step[Case $q_0 = \delta_z - \nu$]
	Similarly, it suffices to look at the term $\ip{w(0), q_0}$ as well. 
	With $q_0 = \delta_z - \nu$, we have 
	\begin{align*}
		\ip{w(0), q_0} = w(0,z) - \int w(0,x) \nu(dx) = \int (w(0,z) - w(0,x)) \nu(dx) \,.
	\end{align*}
	By mean-value theorem, for every $x$ there exists some $\zeta(z,x) \in B(c_0, 2 r_\cut)$ such that 
	\begin{align*}
		w(0,z) - w(0,x) & = \grad w(0, \zeta_{z,x}) \cdot (z-x) = \sum_{j=1}^d \pa_{x_j} w(0,\zeta_{z,x}) (z_j - x_j) \\
		& = \sum_{j=1}^d (z_j - x_j) \left( \E \left[ (\p_{x_j} S^{0,\zeta_{z,x}}_{T}) \cdot \grad \xi (S^{0,\zeta_{z,x}}_{T}) \right] + \varphi_j^\rem(T;0,\zeta_{z,x}) \right) \,.
	\end{align*}
	Then 
	\begin{align*}
		\ip{w(0), q_0} & = \int \sum_{j=1}^d (z_j - x_j) \E \left[ (\p_{x_j} S^{0,\zeta_{z,x}}_{T}) \cdot \grad \xi (S^{0,\zeta_{z,x}}_{T}) \right] \nu(dx) \\
		& \qquad + \int \sum_{j=1}^d (z_j - x_j) \varphi_j^\rem(T;0,\zeta_{z,x}) \nu(dx) \,.
	\end{align*}
	Since $z \in B(c_0,2r_\cut)$ and $\supp(\nu) \subset B(c_0,2r_\cut)$, we know all $z_j - x_j$ and $\zeta_{z,x}$ are always bounded.
	Thus~\eqref{e:exp-decay-general} follows, finishing the proof. 
\end{proof}

\begin{proof}[Proof of Lemma~\ref{lm:connect-r1-q}]
	Notice $(q_t)_{t \ge 0}$ can be interpreted as 
	\begin{align*}
		\frac{d}{dt} \ip{f, q_t} & = \int \left( \frac{\sigma^2}{2} \abs{x-M(\mu_t)}^2 \phi(x)^2 \lap f(x) - \lambda (x-M(\mu_t)) \cdot \grad f(x) \right) q_t(dx) \\
		& \quad + \frac{ \ip{\lambda \grad f - \sigma^2 (\cdot - M(\mu_t)) \phi^2 \lap f, \mu_t} }{\ip{e^{-\alpha\efn}, \mu_t}} \cdot \int (x-M(\mu_t)) e^{-\alpha \efn(x)} q_t(dx) \\
		& \quad + \int f(x) r_t(dx) \,.
	\end{align*}
	Comparing $L_{\mu_t}$ and $L_{\delta_{\tilde x^\mu}}$ gives
	\begin{align*}
		\ip{f, r^1_t} & = \int \frac{\sigma^2}{2} (\tilde x^\mu - M(\mu_t))^\top (2x- \tilde x^\mu - M(\mu_t)) \phi(x)^2 \lap f(x) q_t(dx) \\
        & \qquad + \int \lambda (M(\mu_t) - \tilde x^\mu)^\top \grad f(x) q_t(dx) \\
		& \quad + \frac{ \ip{\lambda \grad f - \sigma^2 (\cdot - M(\mu_t)) \phi^2 \lap f, \mu_t} }{\ip{e^{-\alpha\efn}, \mu_t}} \cdot (\tilde x^\mu - M(\mu_t)) \ip{e^{-\alpha\efn}, q_t} \\
		& \quad + \left( \frac{ \ip{\lambda \grad f - \sigma^2 (\cdot - M(\mu_t)) \phi^2 \lap f, \mu_t} }{\ip{e^{-\alpha\efn}, \mu_t}} - \lambda e^{\alpha \efn(\tilde x^\mu)} \grad f(\tilde x^\mu) \right) \cdot \ip{(\cdot - \tilde x^\mu) e^{-\alpha\efn}, q_t} \,.
	\end{align*}
	Define $R^{1,i}_t \in (W^{1,\infty})'$ by 
	\begin{align*}
		\ip{g, R^{1,i}_t} & \defeq \int \frac{\sigma^2}{2} (\tilde x^\mu - M(\mu_t))^\top (2x- \tilde x^\mu - M(\mu_t)) \phi(x)^2 \p_{x_i} g(x) q_t(dx) \\
        & \qquad + \int \lambda (M(\mu_t)_i - \tilde x^\mu_i) g(x)  q_t(dx) \\
		& \quad + \frac{ \ip{\lambda (\tilde x^\mu_i - M(\mu_t)_i) g - \sigma^2 \phi^2 (\tilde x^\mu - M(\mu_t))^\top (\cdot - M(\mu_t)) \p_{x_i} g, \mu_t} }{\ip{e^{-\alpha\efn}, \mu_t}}  \ip{e^{-\alpha\efn}, q_t} \\
		& \quad + \left( \frac{ \ip{\lambda g, \mu_t} }{\ip{e^{-\alpha\efn}, \mu_t}} - \lambda e^{\alpha \efn(\tilde x^\mu)} g(\tilde x^\mu) \right)  \ip{(\cdot - \tilde x^\mu)_i e^{-\alpha\efn}, q_t} \\
		& \qquad - \frac{ \ip{\sigma^2 (\cdot - M(\mu_t)) \phi^2 \p_{x_i} g, \mu_t} }{\ip{e^{-\alpha\efn}, \mu_t}}  \cdot \ip{(\cdot - \tilde x^\mu) e^{-\alpha\efn}, q_t} \,.
	\end{align*}
	Then we have  
	\begin{equation*}
		\ip{f, r^1_t} = \sum_{i=1}^d \ip{\p_{x_i} f, R^{1,i}_t} \,.
	\end{equation*}

	Now we look at the relation between $R^{1,i}_t$'s and $q_t$. 
	Recall from Proposition~\ref{pp:CBO-mf-decay} that 
	\begin{equation}
		\label{e:m-mu-t-decay}
		\abs{M(\mu_t) - \tilde x^\mu} \le C e^{-\kappa t} \,, \qquad \int \abs{x - M(\mu_t)} \mu_t(dx) \le C e^{-\kappa t} 
	\end{equation}
	for all $t \ge 0$.

	Fix an arbitrary $i \in [d]$, and we let $\ip{g, R^{1,i}_t} = P_1 + P_2 + P_3 + P_4 + P_5$ in the order as defined above. 
	Notice that $P_1$ and $P_2$ are both linear in $\tilde x^\mu - M(\mu_t)$, so 
	\begin{align*}
		\abs{P_1 + P_2} & \le C_1(\lambda,\sigma,\phi) C e^{-\kappa t} \norm{g}_{n+1,\infty} \norm{q_t}_{(n,\infty)'} \,,
	\end{align*}
	where $C_1(\lambda,\sigma,\phi)$ is some constant depending linearly on $\lambda, \sigma^2$, and $\norm{\phi}_{n,\infty}$.

	Observe that $e^{-\alpha \efn}$ and $x e^{-\alpha \efn}$ are both $W^{n,\infty}$.
    In addition, by Jensen's inequality, 
    \begin{equation*}
        \ip{e^{-\alpha \efn}, \mu_t} \ge \exp(-\alpha \ip{\efn, \mu_t}) \ge \exp(-\alpha c_\efn 9 r_\cut^2). 
    \end{equation*}
	Then similar to the above terms, we have
	\begin{equation*}
		\abs{P_3 + P_5} \le C_3(\lambda, \sigma, \phi, e^{\alpha c_\efn 9 r_\cut^2}) C e^{-\kappa t} \norm{g}_{1,\infty} \norm{q_t}_{(n,\infty)'} \,.
	\end{equation*}

	For $P_4$, we see that 
	\begin{equation*}
		\frac{\ip{\lambda g, \mu_t} }{\ip{e^{-\alpha\efn}, \mu_t}} - \lambda e^{\alpha \efn(\tilde x^\mu)} g(\tilde x^\mu) = \lambda \frac{\ip{g, \mu_t - \delta_{\tilde x^\mu}} }{\ip{e^{-\alpha\efn}, \mu_t}} +  \lambda e^{\alpha \efn(\tilde x^\mu)} g(\tilde x^\mu)  \frac{ \ip{e^{-\alpha\efn}, \delta_{\tilde x^\mu} - \mu_t} }{\ip{e^{-\alpha\efn},\mu_t}} \,.
	\end{equation*}
	Recall that Proposition~\ref{pp:CBO-mf-decay} implies $\norm{\mu_t - \delta_{\tilde x}}_{(1,\infty)'} \le C e^{-\kappa t}$, so 
	\begin{equation*}
		\abs{P_4} \le C_4 C e^{-\kappa t} \norm{g}_{1,\infty} \norm{q_t}_{(n,\infty)'} \,.
	\end{equation*}
	Summarizing the above, we get 
	\begin{equation*}
		\abs{\ip{g, R^{1,i}_t}} \le \bar C e^{-\kappa t} \norm{g}_{n+1,\infty} \norm{q_t}_{(n,\infty)'} \,,
	\end{equation*}
	where $\bar C$ depends only on $\lambda, \sigma, \alpha$, $\phi$, and some finite parameters of $\efn$. 
	
	Thus we conclude that 
	\begin{equation*}
		\sup_{i \in [d]} \norm{R^{1,i}_t}_{(n+1,\infty)'} \le \bar C e^{-\kappa t} \norm{q_t}_{(n,\infty)'} \,.
	\end{equation*}
\end{proof}

\begin{proof}[Proof of Lemma~\ref{lm:bound-q}]
	\restartsteps
	\step[Decomposition of norms]
	Fix arbitrary $T > 0$. 
	Recall that 
	\begin{equation*}
		\norm{q_T}_{(n \lor (1+n_0),\infty)'} = \sup_{\norm{\xi}_{n \lor (1+n_0), \infty}} \ip{\xi, q_T} \,.
	\end{equation*}
	For arbitrary $\xi \in W^{n \lor (1+n_0), \infty}$, we consider the backward Cauchy problem 
	\begin{equation*}
		\begin{cases}
			\p_t w_t + \lin_{\mu_t} w_t + \a_{\mu_t} w_t = 0 \,, & t \in [0,T) \,, \\
			w_T = \xi \,.
		\end{cases}
	\end{equation*}
	Then 
	\begin{equation*}
		\ip{\xi, q_T} = \ip{w(0), q_0} + \int_0^T \ip{w(t), r_t} dt = \ip{w(0), q_0} - \int_0^T \sum_{i=1}^d \ip{\p_{x_i} w(t), R^i_t} dt \,.
	\end{equation*}
	It remains to study the bounds for $w$. 

	\step[Recurrence of $w$]
	Fix $T, \xi, \mu$, we define 
	\begin{equation*}
		L_t = L^{T,\xi,\mu}_t \defeq \ip{\lambda \grad w(t) - \sigma^2 (\cdot-M(\mu_t)) \phi^2 \lap w(t), \mu_t } \in \R^d
	\end{equation*}
	for $t \in [0,T]$. 
	Note that $L$ does not depend on $x$. 
	The backward equation for $w$ can be written more explicitly as 
	\begin{equation*}
		\p_t w(t,x) + \lin_{\mu_t} w(t,x) + \frac{L_t}{\ip{e^{-\alpha\efn},\mu_t}} \cdot G_t(x) = 0 \,,
	\end{equation*}
	where 
	\begin{equation*}
		G_t(x) = G^\mu_t(x) \defeq (x-M(\mu_t)) e^{-\alpha \efn(x)} \,.
	\end{equation*}
	Applying the Feynman-Kac formula, we have an (implicit) formula for $w$,
	\begin{equation*}
		w(t,x) = \E [\xi(Y^{t,x,\mu}_T)] + \int_t^T \frac{L_u}{\ip{e^{-\alpha\efn},\mu_u}} \cdot \E [G^\mu_u(Y^{t,x,\mu}_u)] du \,,
	\end{equation*}
	where $(Y^{t,x,\mu}_u)_{u \in [t,T]}$ satisfies the SDE 
	\begin{equation*}
		d Y_u = -\lambda (Y_u - M(\mu_u)) du + \sigma \abs{Y_u - M(\mu_u)} \phi(Y_u) dW_u \,, \quad u \in [t,T] \,,
	\end{equation*}
	with $Y^{t,x,\mu}_t = x$. 
    Similar to the situation in Lemma~\ref{lm:erg-at-delta}, $w_t \in W^{n,\infty}(B(c_0, 2r_\cut))$.
	Taking derivatives in $x$, we get 
	\begin{equation*}
		\p_{x_i} w(t,x) = \E [D\xi(Y^{t,x,\mu}_T) (\p_{x_i} Y^{t,x,\mu}_T)] + \int_t^T \E [(\p_{x_i} Y^{t,x,\mu}_u)^\top D G_u(Y^{t,x,\mu}_u)] \frac{L_u}{\ip{e^{-\alpha\efn}, \mu_u}} du 
	\end{equation*}
	and 
	\begin{align*}
		& \p_{x_i}^2 w(t,x) = \E [D\xi(Y^{t,x,\mu}_T) (\p_{x_i}^2 Y^{t,x,\mu}_T) + (\p_{x_i} Y^{t,x,\mu}_T)^\top D^2 \xi(Y^{t,x,\mu}_T) (\p_{x_i} Y^{t,x,\mu}_T)] \\
		& \quad + \int_t^T \E [ (\p_{x_i}^2 Y^{t,x,\mu}_u)^\top D G_u(Y^{t,x,\mu}_u) + (\p_{x_i} Y^{t,x,\mu}_u)^\top D^2 G_u(Y^{t,x,\mu}_u) (\p_{x_i} Y^{t,x,\mu}_u) ] \frac{L_u}{\ip{e^{-\alpha\efn}, \mu_u}} du \,,
	\end{align*}
	where $Df$ and $D^2f$ are the Jacobian matrix and Hessian matrix of $f$, respectively.
	Note that $D^2 G_u$ is a 3-tensor, and $z^\top D^2 G_u(z) z \in \R^{1 \times d}$ with 
	\begin{equation*}
		\paren[\big]{z^\top D^2 G_u(z) z}_j = \sum_{k,k'=1}^d z_k z_{k'} \p_{k,k'} G^j_u(z)
	\end{equation*}
	for each $z \in \R^d$ and $j \in [d]$.
	It suffices to find a uniform uppper bound on $\abs{L_t}_1$, the Euclidean 1-norm.

	Let $(X_t)_{t \in [0,T]}$ be some process with law $(\mu_t)_{t \in [0,T]}$. 
	It satisfies exactly the CBO dynamics $dX_t = -\lambda (X_t - M(\mu_t)) dt + \sigma \abs{X_t - M(\mu_t)} \phi(X_t) dW_t$.
	This means $(Y^{t,x,\mu}_u)_{u \in [t,T]} \sim (X_u)_{u \in [t,T]}$ if $x \sim \mu_t$. 
	So by Markov property, for each $i \in [d]$ we have 
	\begin{align*}
		L^i_t & = \lambda \E [D\xi(X_T) (\p_{x_i} Y^{t,\cdot,\mu}_T (X_t))] - \sigma^2 \sum_{j=1}^d \E \left[(X_t - M(\mu_t))_i \phi(X_t)^2 \right. \\
		& \qquad \qquad  \left. \paren[\big]{ D\xi(X_T) (\p_{x_j}^2 Y^{t,\cdot,\mu}_T (X_t)) + (\p_{x_j} Y^{t,\cdot,\mu}_T (X_t))^\top D^2 \xi(X_T) (\p_{x_j} Y^{t,\cdot,\mu}_T(X_t)) } \right] \\
		& \quad + \int_t^T B^i_{u,t} L_u  du
	\end{align*}
	for some appropriate coupling of $X$ and $Y$, where $B^i_{u,t} \in \R^{1 \times d}$ is defined by 
	\begin{align*}
		B^i_{u,t} & \defeq \frac{\lambda}{\ip{e^{-\alpha\efn},\mu_u}} \E [(\p_{x_i} Y^{t,\cdot,\mu}_u (X_t))^\top DG(X_u)] - \frac{\sigma^2}{\ip{e^{-\alpha\efn},\mu_u}} \sum_{j=1}^d \E \left[ (X_t - M(\mu_t))_i \phi(X_t)^2 \right. \\
		& \qquad \left. \paren[\big]{ (\p_{x_j}^2 Y^{t,\cdot,\mu}_u (X_t))^\top DG_u(X_u) + (\p_{x_j} Y^{t,\cdot,\mu}_u (X_t))^\top D^2 G_u(X_u) (\p_{x_j} Y^{t,\cdot,\mu}_u(X_t)) } \right] \,.
	\end{align*}
	We will show in the next step that $\abs{B^{i,k}_{u,t}} \le e^{-\lambda (u-t)}(\lambda \one{i=k} + C_0 e^{-\kappa_1 u})$ for some $C_0, \kappa_1 > 0$, and then 
	\begin{equation*}
		\abs{L_t}_1 \le C  e^{-\lambda (T-t)} \norm{\xi}_{2,\infty} + \int_t^T e^{-\lambda(u-t)} (\lambda + C_0 d e^{-\kappa_1 u}) \abs{L_u}_1 du \,.
	\end{equation*}

	\step[Upper bounds of $B_{u,t}$]
	For each $k \in [d]$, we may split the $k$-th component of $B^i_{u,t}$ by 
	\begin{align*}
		B^{i,k}_{u,t} & = \lambda e^{-\lambda (u-t)} + \left( \frac{\lambda}{\ip{e^{-\alpha\efn},\mu_u}} \E[\p_{x_i} Y^{t,\cdot,\mu}_u(X_t)] \cdot e^{-\alpha \efn(\tilde x^\mu)} \ve_k - \lambda e^{-\lambda (u-t)} \right) \\
		& + \frac{\lambda}{\ip{e^{-\alpha\efn},\mu_u}} \E \left[ (\p_{x_i} Y^{t,\cdot,\mu}_u(X_t)) \cdot (\p_k G_u(X_u) - e^{-\alpha \efn(\tilde x^\mu)} \ve_k) \right] \\
		& - \frac{\sigma^2}{\ip{e^{-\alpha\efn},\mu_u}} \sum_{j=1}^d \E \left[(X_t - M(\mu_t))_i \phi(X_t)^2  (\p_{x_j}^2 Y^{t,\cdot,\mu}_u(X_t)) \cdot \p_k G_u(X_u) \right] \\
		& - \frac{\sigma^2}{\ip{e^{-\alpha\efn},\mu_u}} \sum_{j=1}^d \E \left[(X_t - M(\mu_t))_i \phi(X_t)^2  (\p_{x_j} Y^{t,\cdot,\mu}_u (X_t))^\top D^2 G^k_u(X_u) (\p_{x_j} Y^{t,\cdot,\mu}_u(X_t))  \right] \\
		& \defeq \lambda e^{-\lambda (u-t)} + P^k_1 + P^k_2 + P^k_3 + P^k_4 \,,
	\end{align*}
	where $\ve_k$ is the $k$-th vector in the canonical basis of $\R^d$.
	Observe that 
	\begin{equation*}
		P^i_1 = \lambda e^{-\lambda (u-t)} \left( \frac{\ip{e^{-\alpha\efn},\delta_{\tilde x^\mu}}}{\ip{e^{-\alpha\efn},\mu_u}} - 1 \right)
	\end{equation*}
	due to Lemma~\ref{lm:prop-y}. 
	Since $\ip{e^{-\alpha\efn},\mu_u}$ is bounded below, and $\norm{\mu_u - \delta_{\tilde x^\mu}} \le C e^{-\kappa u}$, we have 
	\begin{equation*}
		\abs{P^i_1} \le C_1 \lambda e^{-\lambda (u-t)} e^{-\kappa u} \,,
	\end{equation*}
    and at the same time 
    \begin{equation*}
        \lambda e^{-\lambda (u-t)} + P^k_1 = 0 
    \end{equation*}
    for $k \neq i$.

	Notice that 
	\begin{equation*}
		\p_k G_u(x) =  e^{-\alpha \efn(x)} ( \ve_k -\alpha \p_k \efn(x) (x - M(\mu_u))) \,.
	\end{equation*}
	Then, as the 2nd-order derivatives of $\efn$ are bounded, we have
	\begin{align*}
		\E \abs{\p_k G_u(X_u) - e^{-\alpha \efn(\tilde x^\mu)}\ve_k}^2 & \le 2 \E \paren{e^{-\alpha\efn(X_u)} - e^{-\alpha \efn(\tilde x^\mu)}}^2 + 2\alpha \E \abs{\p_k \efn(X_u) (X_u - M(\mu_u))}^2 \\ 
		& \lesssim O(e^{-2\kappa u}) \,.
	\end{align*}
	By Lemma~\ref{lm:prop-y} and Cauchy-Schwarz inequality, 
	\begin{equation*}
		\abs{P^k_2} \le C_2 e^{-(\lambda - \frac{\sigma^2 \kappa_2}{2}) (u-t)} e^{-\kappa u} \le C_2 e^{-\lambda (u-t)} e^{-(\kappa - \frac{\sigma^2 \kappa_2}{2}) u} \,.
	\end{equation*}
	Recall from Proposition~\ref{pp:CBO-mf-decay} that $\kappa \sim \lambda - d \sigma^2$.
	So $\kappa - \frac{\sigma^2 \kappa_2}{2} > 0$ when $\lambda$ is large enough.

	Further, 
	\begin{align*}
		\p_{\ell,\ell'} G_u(x) & = e^{-\alpha \efn(x)} \left(-\alpha \p_{\ell'} \efn(x) \ve_\ell - \alpha \p_\ell \efn(x) \ve_{\ell'} \right. \\
		& \qquad \left. +\alpha^2 \p_\ell \efn(x) \p_{\ell'} \efn(x) (x-M(\mu_u)) - \alpha \p_{\ell,\ell'} \efn(x) (x-M(\mu_u)) \right) \,.
	\end{align*}
	So we have 
	\begin{align*}
		& (\p_{x_j} Y^{t,\cdot,\mu}_u (X_t))^\top D^2 G^k_u(X_u) (\p_{x_j} Y^{t,\cdot,\mu}_u(X_t)) \\
		& = \sum_{\ell,\ell'=1}^d (\p_{x_j} Y^{t,\cdot,\mu}_{u,\ell} (X_t)) (\p_{x_j} Y^{t,\cdot,\mu}_{u,\ell'} (X_t)) \p_{\ell,\ell'} G^k_u(X_u)\,.
	\end{align*}
	Since $\abs{X_u} \le \abs{c_0} + 2r_\cut < \infty$, there exists some constant $K_2$ such that $\abs{\p_{\ell,\ell'} G^k_u(X_u)} \le K_2$ a.s.
	This gives 
	\begin{align*}
		\abs{P^k_4} & \le \frac{\sigma^2}{\ip{e^{-\alpha\efn},\mu_u}} \sum_{j,\ell,\ell'=1}^d \E \left[ \abs{X_t - M(\mu_t)} \abs{\p_{x_j} Y^{t,\cdot,\mu}_{u,\ell} (X_t)} \abs{\p_{x_j} Y^{t,\cdot,\mu}_{u,\ell'} (X_t)} \right] \\
		& = \frac{\sigma^2}{\ip{e^{-\alpha\efn},\mu_u}} \sum_{j=1}^d \E \left[ \abs{X_t - M(\mu_t)} \left( \sum_{\ell=1}^d \abs{\p_{x_j} Y^{t,X_t,\mu}_{u,\ell}} \right)^2 \right] \\
		& \le \frac{\sigma^2}{\ip{e^{-\alpha\efn},\mu_u}} d \sum_{j=1}^d \E \left[ \abs{X_t-M(\mu_t)} \sum_{\ell=1}^d \abs{\p_{x_j} Y^{t,X_t,\mu}_{u,\ell}}^2 \right] \,.
	\end{align*}
	Applying Cauchy-Schwarz, and using Proposition~\ref{pp:CBO-mf-decay} and Lemma~\ref{lm:prop-y} to bound the two components respectively, we obtain 
	\begin{equation*}
		\abs{P^k_4} \le C_4 d^3 e^{-\kappa t} e^{-2(\lambda - \frac{\sigma^2 \kappa_4}{4}) (u-t)} \le C_4 d^3 e^{-\lambda(u-t)} e^{-((\lambda-\frac{\sigma^2 \kappa_4}{4}) \land \kappa) u} \,. 
	\end{equation*}

	For $P_3$, we apply the estimate of $\p_k G_u(X_u)$ in the analysis of $P_2$,
	\begin{equation*}
		\E \abs{\p_k G_u(X_u) -  e^{-\alpha \efn(\tilde x^\mu)}\ve_k}^2 \le O(e^{-2\kappa u}) \,,
	\end{equation*} 
	so that, with Lemma~\ref{lm:prop-y} again,
	\begin{equation*}
		\E \abs{(X_t - M(\mu_t))_i \phi(X_t)^2  (\p_{x_j}^2 Y^{t,\cdot,\mu}_u(X_t)) \cdot (\p_k G_u(X_u) -  e^{-\alpha \efn(\tilde x^\mu)}\ve_k)} \le C_3 e^{-\lambda (u-t)} e^{-(\kappa - \frac{\sigma^2 \kappa_2}{2}) u} \,.
	\end{equation*}
	In addition, using tower property and then Lemma~\ref{lm:prop-y} for the inner layer, we have 
	\begin{align*}
		& \E \left[ (X_t - M(\mu_t))_i \phi(X_t)^2  (\p_{x_j}^2 Y^{t,\cdot,\mu}_{u,k}(X_t)) \right] \\
		& = \E \left[ (X_t - M(\mu_t))_i \phi(X_t)^2  \E[ (\p_{x_j}^2 Y^{t,\cdot,\mu}_{u,k}(X_t)) \mid X_t] \right] = 0 \,.
	\end{align*}
	Thus 
	\begin{equation*}
		\abs{P^k_3} \le C_3 d e^{-\lambda(u-t)} e^{-(\kappa - \frac{\sigma^2 \kappa_2}{2}) u} \,.
	\end{equation*}

	Take the maximum possible $\kappa_1 > 0$. 
	Summarizing the above four terms for all $k$, we get 
	\begin{equation*}
		\abs{B^{i,k}_{u,t}} \le e^{-\lambda(u-t)} (\lambda \one{i=k} + C_0 e^{-\kappa_1 u}) \,.
	\end{equation*}

	\step[Grönwall's inequality]
	It remains to deal with the initial term 
	\begin{align*}
		& \lambda \E [D\xi(X_T) (\p_{x_i} Y^{t,\cdot,\mu}_T (X_t))] - \sigma^2 \sum_{j=1}^d \E \left[(X_t - M(\mu_t))_i \phi(X_t)^2 \right. \\
		& \qquad \left. \paren[\big]{ D\xi(X_T) (\p_{x_j}^2 Y^{t,\cdot,\mu}_T (X_t)) + (\p_{x_j} Y^{t,\cdot,\mu}_T (X_t))^\top D^2 \xi(X_T) (\p_{x_j} Y^{t,\cdot,\mu}_T(X_t)) } \right] \,.
	\end{align*}
	We apply the same techniques to see that it is upper bounded by 
	\begin{equation*}
		C e^{-\lambda (T-t)} (\norm{\grad \xi}_\infty + \norm{\grad^2 \xi}_\infty) \,.
	\end{equation*}
	Thus 
	\begin{equation*}
		\abs{L^i_t} \le C  e^{-\lambda (T-t)} \norm{\xi}_{2,\infty} + \int_t^T e^{-\lambda(u-t)} (\lambda \abs{L^i_u} + C_0 e^{-\kappa_1 u} \abs{L_u}_1) du \,,
	\end{equation*}
	and consequently 
	\begin{equation*}
		e^{-\lambda t} \abs{L_t}_1 \le C  e^{-\lambda T} \norm{\xi}_{2,\infty} + \int_t^T (\lambda + C_0 d e^{-\kappa_1 u}) e^{-\lambda u} \abs{L_u}_1 du \,.
	\end{equation*}
	Apply Grönwall's inequality backward in time, we obtain 
	\begin{equation*}
		e^{-\lambda t} \abs{L_t}_1 \le C e^{-\lambda T} \norm{\xi}_{2,\infty} \exp \left( \int_t^T (\lambda + C_0 d e^{-\kappa_1 u}) du \right) \,,
	\end{equation*}
	so that 
	\begin{equation*}
		\abs{L_t}_1 \le C e^{C_0 e^{-\kappa_1 (T-t)}} \norm{\xi}_{2,\infty} \,.
	\end{equation*}

	\step[Summary]
	Now that the term $\frac{L^{T,\xi,\mu}_u}{\ip{e^{-\alpha\efn},\mu_u}}$ is uniformly bounded, we apply an extension of Lemma~\ref{lm:prop-y} to see that 
	\begin{equation*}
		\norm{\p_{x_i} w(t)}_{n_0,\infty} \le c e^{-\lambda (T-t)/2} \norm{\xi}_{1+n_0,\infty} + \int_t^T C e^{-\lambda (u-t)/2} \norm{\xi}_{2,\infty} du \le \bar C \norm{\xi}_{1+n_0,\infty} \,.
	\end{equation*}
	Also, with the decay of $R_t$, we get 
	\begin{equation*}
		\abs{ \int_0^T \sum_{i=1}^d \ip{\p_{x_i} w(t), R^i_t} dt } \le \frac{\bar C d K}{\lambda_0} \,.
	\end{equation*}
	For specific $q_0$, we have $\ip{w(0), q_0}$ equals 
	\begin{equation*}
		\p_{x_j} w(0,z) \,, \qquad \p_{x_j}^2 w(0,z) \,, \qquad \int \grad w(0,\zeta_{z,x}) \cdot (z-x) \nu(dx)
	\end{equation*}
	respectively, the bound of which follows in the same way as above. 
	Therefore 
	\begin{equation*}
		\norm{q_T}_{(n \lor (1+n_0),\infty)'} \lesssim O(1+K) \,.
	\end{equation*}
\end{proof}

\bibliographystyle{halpha-abbrv}
\bibliography{refs.bib}

\appendix

\section{Details of proofs}
\label{s:complete-prf}

\begin{proof}[Complete proof of Lemma~\ref{l:m2-bound}]
	\restartsteps
	Fix $\mu \in \c_\cbo$ and $z_1, z_2 \in B(c_0, 2r_\cut)$.
	Recall that 
	\begin{equation*}
		\bar m^{(2)}(t;\mu,\delta_{z_1},\delta_{z_2}) = m^{(2)}(t;\mu,\delta_{z_1},\delta_{z_2}) - q^{(1)}_{0,\infty}(\mu,z_2) \cdot \grad m^{(1)}(t;\mu,\delta_{z_1}) \,,
	\end{equation*}
	where $m^{(2)}(\cdot;\mu,\delta_{z_1},\delta_{z_2}) = \cauchy{\mu}{\mu_0 - \delta_{z_2}}{(r_t)_{t \ge 0}}$ with 
	\begin{align*}
		\ip{f, r_t} & = \int \grad f(x) \cdot \frac{\delta b}{\delta m}(x,\mu_t)(q_2(t)) q_1(t)(dx) \\
		& + \int \grad f(x) \cdot \frac{\delta b}{\delta m}(x,\mu_t)(q_1(t)) q_2(t)(dx) \\
		& + \int \grad f(x) \cdot \frac{\delta^2 b}{\delta m^2}(x,\mu_t)(q_1(t),q_2(t)) \mu_t(dx) \\
		& + \frac{1}{2} \int \tr[ \grad^2 f(x) \frac{\delta a}{\delta m}(x,\mu_t)(q_2(t)) ] q_1(t)(dx) \\
		& + \frac{1}{2} \int \tr[ \grad^2 f(x) \frac{\delta a}{\delta m}(x,\mu_t)(q_1(t)) ] q_2(t)(dx) \\
		& + \frac{1}{2} \int \tr[ \grad^2 f(x) \frac{\delta^2 a}{\delta m^2}(x,\mu_t)(q_1(t),q_2(t)) ] \mu_t(dx) \,,
	\end{align*}
	where
	\begin{align*}
		\frac{\delta^2 b}{\delta m^2} (x,\mu,y_1,y_2) & = \frac{-\lambda (y_1+y_2-2M(\mu)) e^{-\alpha\cE(y_1)-\alpha\cE(y_2)}}{\ip{e^{-\alpha\cE},\mu}^2} \,, \\
		\frac{\delta^2 a}{\delta m^2} (x,\mu,y_1,y_2) & = \frac{2\sigma^2 \phi(x)^2 e^{-\alpha\cE(y_1)-\alpha\cE(y_2)}}{\ip{e^{-\alpha\cE},\mu}^2} \left( (x-M(\mu))\cdot (y_1+y_2-2M(\mu)) \right. \\
		& \qquad \qquad \qquad \left. + (y_1-M(\mu)) \cdot (y_2-M(\mu)) \right) I_{d \times d} \,.
	\end{align*}
	Define $G_t = G^\mu_t \defeq (x \mapsto (x-M(\mu_t)) e^{-\alpha \efn(x)})$, the remainder term $r$ becomes explicitly 
	\begin{align*}
		\ip{f, r_t} & = \int (\lambda \grad f(x) - \sigma^2 \phi(x)^2 (x-M(\mu_t)) \lap f(x)) m^{(1)}(t;\mu,\delta_{z_1})(dx) \cdot \frac{\ip{G_t, m^{(1)}(t;\mu,\delta_{z_2})}}{\ip{e^{-\alpha\efn},\mu_t}} \\
		& + \int (\lambda \grad f(x) - \sigma^2 \phi(x)^2 (x-M(\mu_t)) \lap f(x)) m^{(1)}(t;\mu,\delta_{z_2})(dx) \cdot \frac{\ip{G_t, m^{(1)}(t;\mu,\delta_{z_1})}}{\ip{e^{-\alpha\efn},\mu_t}} \\
		& - \int (\lambda \grad f(x) - \sigma^2 \phi(x)^2 (x-M(\mu_t)) \lap f(x)) \mu_t(dx) \cdot \\ 
		& \qquad \left( \frac{ \ip{G_t, m^{(1)}(t;\mu,\delta_{z_1})} \ip{e^{-\alpha\efn}, m^{(1)}(t;\mu,\delta_{z_2})} }{ \ip{e^{-\alpha\efn},\mu_t}^2 } + \right. \\
		& \qquad \quad \left. \frac{ \ip{G_t, m^{(1)}(t;\mu,\delta_{z_2})} \ip{e^{-\alpha\efn}, m^{(1)}(t;\mu,\delta_{z_1})} }{ \ip{e^{-\alpha\efn},\mu_t}^2 } \right) \\
		& + \int \sigma^2 \phi(x)^2 \lap f(x) \mu_t(dx) \frac{ \ip{G_t, m^{(1)}(t;\mu,\delta_{z_1})} \cdot \ip{G_t, m^{(1)}(t;\mu,\delta_{z_2})} }{ \ip{e^{-\alpha\efn},\mu_t}^2 } \\
		& \defeq P_1 + P_2 + P_3 + P_4 \,.
	\end{align*}

	\step[L-FPE for $\bar m^{(2)}$]
	We may view $\bar m^{(2)}$ as also a solution to some L-FPE 
	\begin{equation*}
		\p_t \bar m^{(2)}(t;\mu,\delta_{z_1},\delta_{z_2}) = (\lin_{\mu_t}^\ast + \a_{\mu_t}^\ast) \bar m^{(2)}(t;\mu,\delta_{z_1},\delta_{z_2}) + \bar r(t;\mu,\delta_{z_1},\delta_{z_2}) \,,  
	\end{equation*}
	with $\bar r$ defined by 
	\begin{equation}
		\label{e:remainder-r-bar}
		\bar r(t) \defeq r(t) + q^{(1)}_{0,\infty} \cdot \paren[\Big]{ (\lin_{\mu_t}^\ast + \a_{\mu_t}^\ast) \grad m^{(1)}(t;\mu,\delta_{z_1}) - \grad (\lin_{\mu_t}^\ast + \a_{\mu_t}^\ast) m^{(1)}(t;\mu,\delta_{z_1}) } \,.
	\end{equation}
	Here 
	\begin{equation*}
		\ip{f, (\lin_{\mu_t}^\ast \grad - \grad \lin_{\mu_t}^\ast) q} = \ip{\lambda \grad f - \sigma^2 \phi \grad\phi \abs{\cdot-M(\mu_t)}^2 \lap f - \sigma^2 \phi^2 (\cdot-M(\mu_t)) \lap f, q }
	\end{equation*}
	and 
	\begin{align*}
		& \ip{f, (\a_{\mu_t}^\ast \grad - \grad \a_{\mu_t}^\ast) q} \\
		& = -\int \ip{ \lambda \grad f - \sigma^2 \phi^2 (\cdot-M(\mu_t)) \lap f, \mu_t } \frac{e^{-\alpha \efn(x)}}{\ip{e^{-\alpha\efn},\mu_t}} \\
		& \quad - \ip{\lambda \grad f - \sigma^2 \phi^2 (\cdot-M(\mu_t)) \lap f, \mu_t } \cdot (x-M(\mu_t)) \frac{-\alpha \grad \efn(x) e^{-\alpha \efn(x)}}{\ip{e^{-\alpha\efn},\mu_t}} \\
		& \quad + \left[ \ip{\lambda \grad \p_{x_j} f - \sigma^2 \phi^2 (\cdot-M(\mu_t)) \lap \p_{x_j} f, \mu_t} \cdot (x-M(\mu_t)) \frac{e^{-\alpha\efn(x)}}{\ip{e^{-\alpha\efn},\mu_t}} \right]_{j=1}^d q(dx) \,.
	\end{align*}
	Without ambiguity, we abbreviate $m^{(1)}(t;\mu,\delta_{z_i}) = q_i(t)$ for $i = 1,2$ and expand $q_2(t) = \tilde q_2(t) + q_\infty \cdot \grad \mu_t + q_\infty \cdot \grad (\delta_{\tilde x^\mu} - \mu_t)$. 
	Then the difference term in~\eqref{e:remainder-r-bar} can be written as 
	\begin{align}
		\nonumber 
		& q_\infty \cdot \paren[\Big]{ (\lin_{\mu_t}^\ast + \a_{\mu_t}^\ast) \grad q_1(t) - \grad (\lin_{\mu_t}^\ast + \a_{\mu_t}^\ast) q_1(t) } \\
		\label{e:r-diff-term-1}
		& = +\sum_{i=1}^d  \ip{\lambda q_\infty^i \p_{x_i} f - \sigma^2 \phi^2 \p_{x_i}^2 f q_\infty^\top (\cdot-M(\mu_t)), q_1(t)} \\
		& \qquad - \sigma^2 \sum_{i=1}^d \ip{\phi \p_{x_i}^2 f \abs{\cdot-M(\mu_t)}^2 q_\infty^\top \grad \phi, q_1(t)} \\
		& \quad - \left( \sum_{i=1}^d  \ip{\lambda q_\infty^i \p_{x_i} f - \sigma^2 \phi^2 \p_{x_i}^2 f q_\infty^\top (\cdot-M(\mu_t)),\mu_t} \right) \frac{\ip{e^{-\alpha\efn}, q_1(t)}}{\ip{e^{-\alpha\efn}, \mu_t}} \\
		& \quad - \sum_{i=1}^d \ip{\lambda \p_{x_i}f , \mu_t }  \frac{\ip{- q_\infty^\top \alpha \grad \efn G_t^i, q_1(t)}}{\ip{e^{-\alpha\efn},\mu_t}} \\
		& \quad + \sum_{i=1}^d \ip{\sigma^2 \phi^2 \p_{x_i}^2 f (\cdot-M(\mu_t)), \mu_t} \cdot \frac{\ip{- q_\infty^\top \alpha \grad \efn G_t, q_1(t)}}{\ip{e^{-\alpha\efn},\mu_t}}  \\
		\label{e:r-diff-term-2}
		& \quad + \sum_{i=1}^d q_\infty^i \ip{\lambda \grad \p_{x_i} f - \sigma^2 \phi^2 \lap \p_{x_i}^2 f (\cdot-M(\mu_t)), \mu_t} \cdot \frac{\ip{G_t, q_1(t)}}{\ip{e^{-\alpha \efn},\mu_t}} \,.
	\end{align}

	Recall from the remainder term $r$ that 
	\begin{equation*}
		P_1 = \ip{\lambda \grad f - \sigma^2 \phi^2 (\cdot-M(\mu_t)) \lap f, q_1(t)} \cdot \frac{\ip{G_t, q_2(t)}}{\ip{e^{-\alpha\efn}, \mu_t}} \,,
	\end{equation*}
	where 
	\begin{align*}
		\ip{G_t, q_2(t)} & = \ip{G_t, \tilde q_2(t)} - \sum_{i=1}^d q_\infty^i \left( \ip{\p_{x_i} G_t, \mu_t} + \ip{\p_{x_i} G_t, \delta_{\tilde x^\mu} - \mu_t} \right)  \\
		& = \ip{G_t, \tilde q_2(t)} - \sum_{i=1}^d q_\infty^i \left( \ip{e^{-\alpha\efn}, \mu_t} \ve_i - \ip{\alpha \p_{x_i} \efn G_t, \mu_t} + \ip{\p_{x_i} G_t, \delta_{\tilde x^\mu} - \mu_t} \right) \,.
	\end{align*}
	Then we may write $P_1$ as 
	\begin{align*}
		P_1 & = \ip{\lambda \grad f - \sigma^2 \phi^2 (\cdot-M(\mu_t)) \lap f, q_1(t)} \cdot \frac{\ip{G_t, \tilde q_2(t)}}{\ip{e^{-\alpha\efn}, \mu_t}} \\
		& \quad - \sum_{i=1}^d \ip{\lambda q_\infty^i \p_{x_i} f - \sigma^2 \phi^2 \lap f q_\infty^i (\cdot-M(\mu_t))_i, q_1(t) } \\
		& \quad + \ip{\lambda \grad f - \sigma^2 \phi^2 (\cdot-M(\mu_t)) \lap f, q_1(t)} \cdot \sum_{i=1}^d q_\infty^i (\ip{\p_{x_i} G_t, \delta_{\tilde x^\mu} - \mu_t} - \ip{\alpha \p_{x_i} \efn G_t, \mu_t}) \\
		& = \ip{\lambda \grad f - \sigma^2 \phi^2 (\cdot-M(\mu_t)) \lap f, q_1(t)} \cdot \frac{\ip{G_t, \tilde q_2(t)}}{\ip{e^{-\alpha\efn}, \mu_t}} \\
		& \quad - \sum_{i=1}^d \ip{\lambda q_\infty^i \p_{x_i} f - \sigma^2 \phi^2 \p_{x_i}^2 f q_\infty^\top (\cdot-M(\mu_t)), q_1(t) } \\
		& \quad + \ip{\lambda \grad f - \sigma^2 \phi^2 (\cdot-M(\mu_t)) \lap f, q_1(t)} \cdot \sum_{i=1}^d q_\infty^i (\ip{\p_{x_i} G_t, \delta_{\tilde x^\mu} - \mu_t} - \ip{\alpha \p_{x_i} \efn G_t, \mu_t}) \,.
	\end{align*}
	Note that the second last line cancels with~\eqref{e:r-diff-term-1}.

	Next, we study $P_2$, 
	\begin{equation*}
		\ip{\lambda \grad f - \sigma^2 \phi^2 (\cdot-M(\mu_t)) \lap f, q_2(t)} \cdot \frac{\ip{G_t, q_1(t)}}{\ip{e^{-\alpha\efn}, \mu_t}} \,.
	\end{equation*}
	Using the expansion $q_2(t) = \tilde q_2(t) + q_\infty \cdot \grad \mu_t + q_\infty \cdot \grad (\delta_{\tilde x^\mu} - \mu_t)$, we get 
	\begin{align*}
		P_2 & = \ip{\lambda \grad f - \sigma^2 \phi^2 (\cdot-M(\mu_t)) \lap f, \tilde q_2(t)} \cdot \frac{\ip{G_t, q_1(t)}}{\ip{e^{-\alpha\efn}, \mu_t}} \\
		& \quad - \sum_{i=1}^d q_\infty^i \ip{\lambda \grad \p_{x_i} f - \sigma^2 \phi^2 \lap \p_{x_i} f (\cdot-M(\mu_t)), \mu_t } \cdot \frac{\ip{G_t, q_1(t)}}{\ip{e^{-\alpha\efn}, \mu_t}} \\
		& \quad + \sum_{i=1}^d q_\infty^i \sigma^2 \ip{ 2 \phi \p_{x_i} \phi \lap f (\cdot-M(\mu_t)) + \phi^2 \lap f \ve_i, \mu_t } \cdot \frac{\ip{G_t, q_1(t)}}{\ip{e^{-\alpha\efn}, \mu_t}} \\
		& \quad + \sum_{i=1}^d q_\infty^i \ip{\lambda \grad f - \sigma^2 \phi^2 \lap f (\cdot-M(\mu_t)), \p_{x_i} (\delta_{\tilde x^\mu} - \mu_t)} \cdot \frac{\ip{G_t, q_1(t)}}{\ip{e^{-\alpha\efn}, \mu_t}}
	\end{align*}
	Notice that the second line cancels with~\eqref{e:r-diff-term-2}.

	Summarizing the above analysis, we have the remainder term $\bar r$ being 
	\begin{align*}
		\ip{f, \bar r(t)} & = \ip{\lambda \grad f - \sigma^2 \phi^2 (\cdot-M(\mu_t)) \lap f, q_1(t)} \cdot \frac{\ip{G_t, \tilde q_2(t)}}{\ip{e^{-\alpha\efn}, \mu_t}} \\
		& \qquad + \sigma^2 \sum_{i=1}^d \ip{\phi \p_{x_i}^2 f \abs{\cdot-M(\mu_t)}^2 q_\infty^\top \grad \phi, q_1(t)} \\
		& \quad - \ip{\lambda \grad f - \sigma^2 \phi^2 (\cdot-M(\mu_t)) \lap f, q_1(t)} \cdot \sum_{i=1}^d q_\infty^i (\ip{\p_{x_i} G_t, \delta_{\tilde x^\mu} - \mu_t} - \ip{\alpha \p_{x_i} \efn G_t, \mu_t})  \\
		& \qquad + \ip{\lambda \grad f - \sigma^2 \phi^2 (\cdot-M(\mu_t)) \lap f, \tilde q_2(t)} \cdot \frac{\ip{G_t, q_1(t)}}{\ip{e^{-\alpha\efn}, \mu_t}} \\
		& \quad + \sum_{i=1}^d q_\infty^i \sigma^2 \ip{ 2 \phi \p_{x_i} \phi \lap f (\cdot-M(\mu_t)) + \phi^2 \lap f \ve_i, \mu_t } \cdot \frac{\ip{G_t, q_1(t)}}{\ip{e^{-\alpha\efn}, \mu_t}} \\
		& \quad + \sum_{i=1}^d q_\infty^i \ip{\lambda \grad f - \sigma^2 \phi^2 \lap f (\cdot-M(\mu_t)), \p_{x_i} (\delta_{\tilde x^\mu} - \mu_t)} \cdot \frac{\ip{G_t, q_1(t)}}{\ip{e^{-\alpha\efn}, \mu_t}} \\
		& \quad - \ip {\lambda \grad f - \sigma^2 \phi^2 (\cdot-M(\mu_t)) \lap f, \mu_t} \cdot \\ 
		& \qquad \left( \frac{ \ip{G_t, q_1(t)} \ip{e^{-\alpha\efn}, q_2(t)} }{ \ip{e^{-\alpha\efn},\mu_t}^2 } +  \frac{ \ip{G_t, q_2(t)} \ip{e^{-\alpha\efn}, q_1(t)} }{ \ip{e^{-\alpha\efn},\mu_t}^2 } \right) \\
		& \quad + \ip{ \sigma^2 \phi^2 \lap f, \mu_t} \frac{ \ip{G_t, q_1(t)} \cdot \ip{G_t, q_2(t)} }{ \ip{e^{-\alpha\efn},\mu_t}^2 } \\
		& \quad - \left( \sum_{i=1}^d  \ip{\lambda q_\infty^i \p_{x_i} f - \sigma^2 \phi^2 \p_{x_i}^2 f q_\infty^\top (\cdot-M(\mu_t)),\mu_t} \right) \frac{\ip{e^{-\alpha\efn}, q_1(t)}}{\ip{e^{-\alpha\efn}, \mu_t}} \\
		& \quad - \sum_{i=1}^d \ip{\lambda \p_{x_i}f , \mu_t }  \frac{\ip{- q_\infty^\top \alpha \grad \efn G_t^i, q_1(t)}}{\ip{e^{-\alpha\efn},\mu_t}} \\
		& \quad + \sum_{i=1}^d \ip{\sigma^2 \phi^2 \p_{x_i}^2 f (\cdot-M(\mu_t)), \mu_t} \cdot \frac{\ip{- q_\infty^\top \alpha \grad \efn G_t, q_1(t)}}{\ip{e^{-\alpha\efn},\mu_t}}  \,.
	\end{align*}

	\step[Decay of remainder term]
	Let $g_i = \p_{x_i} f$ for $i \in [d]$. 
	Then we can write 
	\begin{equation*}
		\ip{f, \bar r(t)} = \sum_{i=1}^d \ip{g_i, \bar R^i(t)} \,,
	\end{equation*}
	where $R^i(t)$ is defined by 
	\begin{align}
		\nonumber
		& \ip{g_i, \bar R^i(t)} = \\
		\label{e:r-l1} \tag{$L_1$}
		& \qquad \ip{\lambda g_i \ve_i - \sigma^2 \phi^2 \p_{x_i} g_i (\cdot-M(\mu_t)), \mu_t} \cdot \frac{\ip{G_t, \tilde q_2(t)}}{\ip{e^{-\alpha\efn}, \mu_t}} \\
		\label{e:r-l2} \tag{$L_2$}
		& \quad + \ip{\sigma^2 \phi q_\infty^\top \grad \phi \abs{\cdot-M(\mu_t)}^2 \p_{x_i} g_i, q_1(t)} \\
		\label{e:r-l3} \tag{$L_3$}
		& \quad - \ip{\lambda g_i, q_1(t)} q_\infty^\top ( \ip{\grad G_t^i, \delta_{\tilde x^\mu} - \mu_t} + \ip{\alpha \grad \efn G_t^i, \mu_t} ) \\
		\label{e:r-l4} \tag{$L_4$}
		& \quad + \ip{\lambda g_i \ve_i - \sigma^2 \phi^2 \p_{x_i} g_i (\cdot-M(\mu_t)), \tilde q_2(t)} \cdot \frac{\ip{G_t, q_1(t)}}{\ip{e^{-\alpha\efn}, \mu_t}} \\
		\label{e:r-l5} \tag{$L_5$}
		& \quad + \ip{2\sigma^2 \phi q_\infty^\top \grad \phi (\cdot - M(\mu_t)) \p_{x_i} g_i, \mu_t} \cdot \frac{\ip{G_t, q_1(t)}}{\ip{e^{-\alpha\efn}, \mu_t}} \\
		\label{e:r-l6} \tag{$L_6$}
		& \quad + \ip{\sigma^2 \phi^2 \p_{x_i} g_i, \mu_t} \frac{q_\infty^\top \ip{G_t, q_1(t)}}{\ip{e^{-\alpha\efn}, \mu_t}} \\
		\label{e:r-l7} \tag{$L_7$}
		& \quad + \ip{ \lambda g_i \ve_i - \sigma^2 \phi^2 (\cdot-M(\mu_t)) \p_{x_i} g_i, q_\infty^\top \grad(\delta_{\tilde x^\mu}-\mu_t) } \cdot \frac{\ip{G_t^i, q_1(t)}}{\ip{e^{-\alpha\efn},\mu_t}} \\
		\label{e:r-l8} \tag{$L_8$}
		& \quad - \ip {\lambda g_i \ve_i - \sigma^2 \phi^2 (\cdot-M(\mu_t)) \p_{x_i} g_i, \mu_t} \cdot \\ 
		\nonumber 
		& \qquad \left( \frac{ \ip{G_t, q_1(t)} \ip{e^{-\alpha\efn}, q_2(t)} }{ \ip{e^{-\alpha\efn},\mu_t}^2 } +  \frac{ \ip{G_t, q_2(t)} \ip{e^{-\alpha\efn}, q_1(t)} }{ \ip{e^{-\alpha\efn},\mu_t}^2 } \right) \\
		\label{e:r-l9} \tag{$L_9$}
		& \quad + \ip{ \sigma^2 \phi^2 \p_{x_i} g_i, \mu_t} \frac{ \ip{G_t, q_1(t)} \cdot \ip{G_t, q_2(t)} }{ \ip{e^{-\alpha\efn},\mu_t}^2 } \\
		\label{e:r-l10} \tag{$L_{10}$}
		& \quad - \ip{\lambda q_\infty^i g_i - \sigma^2 \phi^2 \p_{x_i} g_i q_\infty^\top (\cdot-M(\mu_t)),\mu_t}  \frac{\ip{e^{-\alpha\efn}, q_1(t)}}{\ip{e^{-\alpha\efn}, \mu_t}} \\
		\label{e:r-l11} \tag{$L_{11}$}
		& \quad - \ip{\lambda g_i , \mu_t }  \frac{\ip{- q_\infty^\top \alpha \grad \efn G_t^i, q_1(t)}}{\ip{e^{-\alpha\efn},\mu_t}} \\
		\label{e:r-l12} \tag{$L_{12}$}
		& \quad + \ip{\sigma^2 \phi^2 \p_{x_i} g_i (\cdot-M(\mu_t)), \mu_t} \cdot \frac{\ip{- q_\infty^\top \alpha \grad \efn G_t, q_1(t)}}{\ip{e^{-\alpha\efn},\mu_t}}  \,.
	\end{align}
	We will show that $\norm{R^i(t)}_{(5,\infty)'} \le K e^{-\kappa_0 t}$ for all $i \in [d]$ and $t \ge 0$. 

	Fix $i$.
	Recall that 
	\begin{equation*}
		\norm{\tilde q_2(t)}_{(4,\infty)'} \le C e^{-\kappa_0 t} \,,
	\end{equation*}
	so for~\eqref{e:r-l1} and~\eqref{e:r-l4} we have 
	\begin{equation*}
		\abs{L_1} \le C_1(\lambda,\sigma,r_\cut) \norm{g_i}_{1,\infty} \frac{\norm{G_t}_{(4,\infty)} \norm{\tilde q_2(t)}_{(4,\infty)'}}{\ip{e^{-\alpha\efn}, \mu_t}} \le C_1' e^{-\kappa_0 t} \norm{g_i}_{1,\infty}
	\end{equation*}
	and similarly 
	\begin{equation*}
		\abs{L_4} \le C_4(\lambda,\sigma,r_\cut) \norm{g_i}_{5,\infty} \norm{\tilde q_2(t)}_{(4,\infty)'} \frac{\norm{G_t}_{(4,\infty)} \norm{q_2(t)}_{(4,\infty)'}}{\ip{e^{-\alpha\efn}, \mu_t}} \le C_4' e^{-\kappa_0 t} \norm{g_i}_{5,\infty} \,,
	\end{equation*}
	due to the fact that $G_t$ has bounded derivatives. 

	Recall that $G_t(x) = (x - M(\mu_t)) e^{-\alpha \efn(x)}$. 
    Using the fact that $\norm{\delta_0 - \mu_t}_{(1,\infty)'} \le C e^{-\kappa t}$ and
	\begin{equation*}
		\int \abs{x - M(\mu_t)} \mu_t(dx) \le C e^{-\kappa t} \,,
	\end{equation*}
	We see the exponential decay in items of the form $\ip{h, \delta_{\tilde x^\mu} - \mu_t}$, $\ip{(\cdot - M(\mu_t)) h, \mu_t}$, and $\ip{(\cdot - M(\mu_t)) h, \delta_{\tilde x^\mu}}$.
    Also, $\abs{q_\infty}$ is uniformly bounded by Remark~\ref{rm:q-infty-bound}.
    Then, for~\eqref{e:r-l3}, \eqref{e:r-l5}, \eqref{e:r-l7}, and~\eqref{e:r-l12}, we have
	\begin{equation*}
		\abs{L_3 + L_5 + L_7 + L_{12}} \le C_3 e^{-\kappa t} \norm{g_i}_{(5,\infty)} \,. 
	\end{equation*}

	Now, recall that $q_1(t) = m^{(1)}(t;\mu,\delta_{z_1}) = \tilde m^{(1)}(t;\mu,\delta_{z_1}) + q^{(1)}_{0,\infty}(\mu,z_1) \cdot \grad \delta_{\tilde x^\mu}$, 
	and we abbreviate as $q_1(t) = \tilde q_1(t) + q_{\infty,1} \cdot \grad \delta_{\tilde x^\mu}$. 
	For~\eqref{e:r-l2} we have 
	\begin{equation*}
		L_2 = \ip{\sigma^2 \phi q_\infty^\top \grad \phi \abs{\cdot-M(\mu_t)}^2 \p_{x_i} g_i, \tilde q_1(t)} - \ip{\sigma^2 q_{\infty,1}^\top \grad (\phi q_\infty^\top \grad \phi \abs{\cdot-M(\mu_t)}^2 \p_{x_i} g_i), \delta_{\tilde x^\mu}} \,,
	\end{equation*}
	so 
	\begin{equation*}
		\abs{L_2} \le C_2 (e^{-\kappa_0 t} + e^{-\kappa t}) \norm{g_i}_{(5,\infty)} \,.
	\end{equation*}

	Notice that~\eqref{e:r-l6} itself is asymptotically non-decaying.
	But using the decomposition $q_2(t) = \tilde q(2) + q_\infty^\top \grad \mu_t + q_\infty^\top \grad (\delta_0 - \mu_t)$, \eqref{e:r-l9} becomes
	\begin{align*}
		\sum_{j=1}^d \ip{\sigma^2 \phi^2 \p_{x_i} g_i, \mu_t} \frac{\ip{G_t^j, q_1(t)}}{\ip{e^{-\alpha\efn}, \mu_t}}  \left( \frac{\ip{G_t^j, \tilde q_2(t)}}{\ip{e^{-\alpha\efn}, \mu_t}} - q_\infty^j + \frac{\ip{q_\infty^\top \alpha \grad \efn G_t^j, \mu_t}}{\ip{e^{-\alpha\efn}, \mu_t}} - \frac{\ip{q_\infty^\top \grad G_t^j, \delta_0-\mu_t}}{\ip{e^{-\alpha\efn}, \mu_t}} \right) \,,
	\end{align*}
	where the $q_\infty^j$-term cancels with~\eqref{e:r-l6}. 
	Then 
	\begin{equation*}
		\abs{L_6 + L_9} \le C_6 (e^{-\kappa_0 t} + e^{-\kappa t}) \norm{g_i}_{1,\infty} \,.
	\end{equation*}

    Finally, we study~\eqref{e:r-l8} and~\eqref{e:r-l10}.
    Observe that the $\sigma^2$-terms contain the form $\ip{(\cdot-M(\mu_t)) h, \mu_t}$, which we know admits exponential decay from the above analysis. 
    It remains to deal with 
    \begin{align*}
        -\lambda \ip{g_i, \mu_t} \frac{ \ip{G_t^i, q_1(t)} \ip{e^{-\alpha\efn}, q_2(t)} + (\ip{G_t^i, q_2(t)} + q_{\infty}^i \ip{e^{-\alpha\efn}, \mu_t}) \ip{e^{-\alpha\efn}, q_1(t)} }{\ip{e^{-\alpha\efn}, \mu_t}^2} \,.
    \end{align*}
    Notice that 
    \begin{align*}
        \ip{G_t^i, q_2(t)} & = \ip{G_t^i, \tilde q_2(t)} - q_\infty^\top \sum_{j=1}^d \p_{x_j} G_t^i(\tilde x^\mu) \\
        & = \ip{G_t^i, \tilde q_2(t)} - q_{\infty}^i e^{-\alpha\efn(\tilde x^\mu)} + \sum_{j=1}^d \alpha q_\infty^j \p_{x_j} \efn(\tilde x^\mu) (\tilde x^\mu_i - M(\mu_t)_i) \,,
    \end{align*}
    so $\ip{G_t^i, q_2(t)} + q_{\infty}^i \ip{e^{-\alpha\efn}, \mu_t}$ admits exponential decay of rate $\kappa \land \kappa_0$. 
    On the other hand, 
    \begin{align*}
        \ip{e^{-\alpha\efn}, q_2(t)} & = \ip{e^{-\alpha\efn}, \tilde q_2(t)} + \alpha q_{\infty}^\top \grad \efn(\tilde x^\mu) e^{-\alpha \efn(\tilde x^\mu)} \,.
    \end{align*}
    We combine the second term with~\eqref{e:r-l11} to see that 
    \begin{align*}
        & \frac{\alpha q_\infty^\top \grad \efn(\tilde x^\mu) e^{-\alpha\efn(\tilde x^\mu)} \ip{G_t^i, q_1(t}}{\ip{e^{-\alpha\efn}, \mu_t}} - \alpha q_\infty^\top \ip{\grad \efn G_t^i, q_1(t)} \\
        & = \sum_{j=1}^d \alpha q_\infty^j \ip{ \paren[\big]{ \frac{\p_{x_j} \efn(\tilde x^\mu) e^{-\alpha \efn(\tilde x^\mu)}}{\ip{e^{-\alpha\efn}, \mu_t}} - \p_{x_j} \efn} G_t^i, q_1(t) } \\
        & = \sum_{j=1}^d \alpha q_\infty^j \ip{ \paren[\big]{ \frac{\p_{x_j} \efn(\tilde x^\mu) e^{-\alpha \efn(\tilde x^\mu)}}{\ip{e^{-\alpha\efn}, \mu_t}} - \p_{x_j} \efn} G_t^i, \tilde q_1(t) } \\
        & \qquad + \sum_{j=1}^d \alpha q_\infty^j \sum_{\ell=1}^d q_{\infty,1}^\top \ip{ \paren[\big]{ \frac{\p_{x_j} \efn(\tilde x^\mu) e^{-\alpha \efn(\tilde x^\mu)}}{\ip{e^{-\alpha\efn}, \mu_t}} - \p_{x_j} \efn} G_t^i, \p_{x_\ell} \delta_{\tilde x^\mu}} \,.
    \end{align*}
    Note that 
    \begin{align*}
        \p_{x_\ell} \paren[\big]{ \frac{\p_{x_j} \efn(\tilde x^\mu) e^{-\alpha \efn(\tilde x^\mu)}}{\ip{e^{-\alpha\efn}, \mu_t}} - \p_{x_j} \efn} \vert_{x=\tilde x^\mu} & = - \p_{x_\ell} \p_{x_j} \efn(\tilde x^\mu) G_t^i(\tilde x^\mu) \\
        & \qquad + \p_{x_j} \efn(\tilde x^\mu) \paren[\big]{ \frac{e^{-\alpha\efn(\tilde x^\mu)}}{\ip{e^{-\alpha\efn}, \mu_t}} - 1 } \p_{\ell} G_t^i (\tilde x^\mu) \,,
    \end{align*}
    which displays exponential decay again due to Proposition~\ref{pp:CBO-mf-decay}.
    Joining the above analysis on~\eqref{e:r-l8} and~\eqref{e:r-l10}, we get 
    \begin{equation*}
        \abs{L_8 + L_{10} + L_{11}} \le C_8 e^{-(\kappa \land \kappa_0) t} \norm{g_i}_{1,\infty} \,.
    \end{equation*}

	To sum up, we see that $\abs{\ip{g_i, R^i_t}} \le K \norm{g_i}_{(5,\infty)'} e^{-\kappa_0 t}$, where $\kappa_0 \le \kappa$, so 
	\begin{equation*}
		\norm{R^i_t}_{(5,\infty)'} \le K e^{-\kappa_0 t} \,.
	\end{equation*}
	That verifies the pre-condition of Lemma~\ref{lm:bound-q} on $r$. 
	Notice that 
	\begin{equation*}
		\bar m^{(2)}(0;\mu,\delta_{z_1},\delta_{z_2}) = (\mu_0 - \delta_{z_2}) - q^{(1)}_{0,\infty}(\mu,z_2) \cdot \grad (\delta_{z_1} - \mu_0)) \,.
	\end{equation*}
	A slight generalization of Lemma~\ref{lm:bound-q} gives the overall bound 
	\begin{equation*}
		\norm{\bar m^{(2)}(t;\mu,\delta_{z_1},\delta_{z_2})}_{(6,\infty)'} \le C(1+K) \,.
	\end{equation*}
\end{proof}

\begin{proof}[Substitute proof of Lemma~\ref{l:d2-bound}]
	By replacing all $m^{(1)}(t;\mu,\delta_z)$ with $d^{(1)}_j(t;\mu,z)$ in the above proof, we get exactly~\eqref{e:d2-r-decay} for Lemma~\ref{l:d2-bound}.

	For~\eqref{e:d2-r-bounded}, the expansion $q(t) = \tilde q(t) + q_\infty \cdot \grad \delta_{\tilde x^\mu}$ is unnecessary. 
	We directly apply the uniform-in-time upper bounds (first inequality) in Lemma~\ref{l:d1-bound} to see that 
	\begin{equation*}
		\abs{\ip{g, \bar R^i_t}} \le C \norm{g}_{3,\infty}
	\end{equation*}
	for every $i$. 

	Now, recall that 
	\begin{align*}
		\norm{q_t}_{(4,\infty)'} & = \sup_{\norm{\xi}_{4,\infty} \le 1} \ip{\xi, q_t} \\
		& = \sup_{\norm{\xi}_{4,\infty} \le 1} \ip{w(0), q_0} + \int_0^t \sum_{i=1}^d \ip{\p_{x_i} w(s), R^i_s} ds \,,
	\end{align*}
	where $w: [0,t] \times \R^d \to \R$ is the solution to 
	\begin{equation*}
		\begin{cases}
			\p_s w(s) + \lin_{\mu_s} w(s) + \a_{\mu_s} w(s) = 0 \,, & s \in [0,t] \,, \\
			w(t) = \xi \,.
		\end{cases}
	\end{equation*}
	We apply~\eqref{e:d2-r-bounded} and Lemma~\ref{lm:prop-y} to see that 
	\begin{equation*}
		\norm{\bar d^{(2)}_{j,j}(t;\mu,z_1,z_2)}_{(4,\infty)'} \le C t \,.
	\end{equation*}
	Then, plug this into the conclusion of Lemma~\ref{lm:connect-r1-q}, we see that the pre-condition of Lemma~\ref{lm:erg-at-delta} is satisfied but with 
	\begin{equation*}
		\norm{R^i_t}_{(5,\infty)'} \le K (t e^{-\kappa t} + e^{-\kappa_0 t}) \,, \qquad t \ge 0 \,,
	\end{equation*}
	which is still asymptotically exponential.
	Thus we conclude that 
	\begin{equation*}
		\norm{\tilde d^{(2)}_{j,j}(t;\mu,z_1,z_2)}_{(6,\infty)'} \le C e^{-\kappa_0' t} 
	\end{equation*}
	for all $t \ge 0$, but with some $\kappa_0' >0$ possibly different from $\kappa_0$ coming from Lemma~\ref{l:d1-bound}.
\end{proof}

\section{Technical lemmata}
\label{s:technical}

\begin{lemma}
	\label{lm:prop-gbm}
	Consider the following generic stochastic differential equation on $\R^d$,
	\begin{equation*}
		dS_u = -\lambda S_u du + \sigma \abs{S_u} \phi(S_u) dW_u \,.
	\end{equation*}
	We denote by $(S^{t,x}_u)_{u \in [t,T]}$ the process that satisfies the above SDE on time interval $[t,T]$ with initial data $S_t = x$. 
	When $\lambda$ is large enough, for any $i \in [d]$, $p \in \N$, and nonzero multi-exponent $\beta \in \N^d$ such that $p + \abs{\beta} \le 5$, we have 
	\begin{equation*}
		\E \left[ \abs{\p_x^\beta (\p_{x_i} S^{t,x}_u)} \abs{\p_{x_i} S^{t,x}_u}^p \right] \le e^{-\frac{\lambda}{2} (u-t)} \,, \qquad u \in [t,T] \,.
	\end{equation*}
\end{lemma}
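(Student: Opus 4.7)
The plan is to differentiate the defining SDE iteratively with respect to the initial condition $x$, obtain linear SDEs for the derivative processes, and then extract exponential decay of their moments via It\^o's formula combined with Gr\"onwall-type arguments. The key observation is that every differentiation of the linear drift $-\lambda S_u$ in $x$ again yields $-\lambda$ times the derivative, so the negative rate is preserved at every order, while the diffusion $\sigma\abs{S_u}\phi(S_u)\,dW_u$ contributes bounded coefficients after differentiation because $\phi$ has compact support and the process $S^{t,x}_u$ started from $x\ne 0$ avoids the origin almost surely (argued on $\log\abs{S_u}^2$ as for geometric Brownian motion).

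Concretely, setting $g(y) \defeq \abs{y}\phi(y)$, the first-order sensitivity $Y^k_u \defeq \p_{x_k} S^{t,x}_u \in \R^d$ solves a linear SDE with drift $-\lambda Y^k_u\,du$ and diffusion coefficient $\sigma\,\nabla g(S_u) \cdot Y^k_u$, while for each nonzero multi-index $\beta$ the higher-order tensor $\p^\beta Y^k_u$ satisfies an analogous linear SDE with an additional forcing $F^\beta_u\,du + G^\beta_u \cdot dW_u$, where $F^\beta, G^\beta$ are polynomial expressions in strictly lower-order derivative tensors $\p^{\beta'} Y^k_u$ with $\abs{\beta'} < \abs{\beta}$, weighted by higher derivatives of $g$ at $S_u$ (all uniformly bounded). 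Crucially, $\p^\beta Y^k_t = 0$ for every nonzero $\beta$.

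I would then apply It\^o's formula to $\abs{\p^\beta Y^k_u}^{2m}$, take expectation, and use Young's inequality on the cross-terms with the forcing to obtain
\begin{equation*}
    \frac{d}{du}\,\E\abs{\p^\beta Y^k_u}^{2m} \le (-2m\lambda + c_{m,\sigma,g})\,\E\abs{\p^\beta Y^k_u}^{2m} + c'_{m,\beta}\,P_{m,\beta}(u) \,,
\end{equation*}
where $P_{m,\beta}(u)$ is a polynomial in the moments of $\p^{\beta'}Y^k_u$ with $\abs{\beta'}<\abs{\beta}$. For $\lambda$ sufficiently large (depending on $\sigma$, $\norm{\phi}_{C^6}$, $m$, and $\abs{\beta}$), the negative rate dominates, and an induction on $\abs{\beta}$ starting from the base case $\abs{\beta}=0$ (where $\E\abs{Y^k_u}^m \le e^{-m\lambda(u-t)}$ follows with no forcing) yields
\begin{equation*}
    \E\abs{\p^\beta Y^k_u}^{m} \le C_{m,\beta}\, e^{-m\lambda(u-t)}
\end{equation*}
for every $m \in \N$ and every $\abs{\beta} \le 5$.

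Finally, Cauchy--Schwarz gives
\begin{equation*}
    \E\bigl[\abs{\p_x^\beta (\p_{x_i} S^{t,x}_u)}\,\abs{\p_{x_i} S^{t,x}_u}^p\bigr] \le \bigl(\E\abs{\p_x^\beta (\p_{x_i} S^{t,x}_u)}^2\bigr)^{1/2}\bigl(\E\abs{\p_{x_i} S^{t,x}_u}^{2p}\bigr)^{1/2} \le C_{p,\beta}\, e^{-(p+1)\lambda(u-t)} \,,
\end{equation*}
and for $\lambda$ large enough the constant $C_{p,\beta}$ is absorbed into the slightly weaker exponent $e^{-\lambda(u-t)/2}$: the inequality holds trivially at $u=t$ since the left-hand side vanishes when $\abs{\beta}\ge 1$, and for $u-t$ bounded away from $0$ the exponential margin absorbs any polynomial constant. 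The main obstacle is the bookkeeping in the induction, since the forcing $F^\beta, G^\beta$ couples the order-$\abs{\beta}$ equation to polynomial combinations of strictly lower-order tensors, and one must verify that H\"older exponents close at every step and that constants remain finite as $\abs{\beta}$ grows up to $5$.
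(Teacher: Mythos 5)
Your approach is essentially the paper's: differentiate the SDE in the initial condition $x$, control moments of the resulting sensitivity processes with It\^o's formula and Gr\"onwall's lemma, exploiting that the diffusion coefficient $\frac{S_u}{\abs{S_u}}\phi(S_u) + \abs{S_u}\grad\phi(S_u)$ and its derivatives are almost surely bounded because $\phi$ is compactly supported and the process stays in $B(c_0,2r_\cut)$, and then close with Cauchy--Schwarz. In fact your treatment of the higher-order derivatives is \emph{more} careful than the paper's: the paper applies $\p_x^\beta$ to the SDE and then asserts a homogeneous inequality of the form $\frac{d}{du}\E\abs{\p_x^\beta(\p_{x_j}S^{t,x}_u)}^2 \le -(2\lambda - \sigma^2\kappa_{2,\beta})\E\abs{\p_x^\beta(\p_{x_j}S^{t,x}_u)}^2$, but since $\p_x^\beta(\p_{x_j}S^{t,x}_t)=0$ for $\abs{\beta}\ge 1$, that inequality would force all higher-order sensitivities to vanish identically, which cannot be right for a nonlinear-in-$x$ flow. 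The Leibniz cross-terms you package into the forcing $F^\beta,G^\beta$ genuinely appear in the quadratic variation of $\p_x^\beta(\p_{x_j}S^{t,x}_u)$, and your induction on $\abs{\beta}$ with vanishing initial data is the correct way to handle them.

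There is, however, a gap in your final absorption step. You arrive at $\E\bigl[\abs{\p_x^\beta(\p_{x_i}S^{t,x}_u)}\abs{\p_{x_i}S^{t,x}_u}^p\bigr]\le C_{p,\beta}e^{-(p+1)\lambda(u-t)}$ and want $\le e^{-\lambda(u-t)/2}$. This is equivalent to $C_{p,\beta}\le e^{(p+1/2)\lambda(u-t)}$, which fails whenever $C_{p,\beta}>1$ on the interval $0<u-t<\ln C_{p,\beta}/((p+1/2)\lambda)$; your two cases ($u=t$, and $u-t$ bounded away from $0$) do not cover it. The fix is to show that the Gr\"onwall constants can be driven below $1$ by taking $\lambda$ large, rather than absorbing them post hoc into the exponent. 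Because the forcing $G^\beta$ is at least quadratic in strictly lower-order sensitivities, $\E\abs{G^\beta_u}^{2}$ decays at a rate that exceeds the homogeneous rate of the $\p^\beta$-equation by a margin of order $\lambda$; hence the Duhamel integral $\int_t^u e^{-a(u-s)}\E\abs{G^\beta_s}^2\,ds$ is $O(1/\lambda)\cdot e^{-a(u-t)}$, and an induction on $\abs{\beta}$ keeping the lower-order constants $\le 1$ (which also requires $\lambda$ large, to beat the $\sigma^2$-corrections in the base case) then gives $C_{p,\beta}\le 1$ and closes the argument. Without this quantitative tracking the claimed clean bound $e^{-\lambda(u-t)/2}$ is not justified.
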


\begin{proof}
	We first look at $\p_{x_j} S^{t,x}_u$ for an arbitrary $j \in [d]$. 
	Observe that, for $u \in [t,T]$,
	\begin{equation*}
		d \p_{x_j} S^{t,x}_u = -\lambda \p_{x_j} S^{t,x}_u du + \sigma \p_{x_j} S^{t,x}_u \cdot \paren[\big]{ \frac{S^{t,x}_u \phi(S^{t,x}_u)}{\abs{S^{t,x}_u}} + \abs{S^{t,x}_u} \grad\phi(S^{t,x}_u) } dW_u  \,,
	\end{equation*}
	where with $c_1$ defined for $\phi$ we have 
	\begin{equation*}
		\abs{ \frac{S^{t,x}_u \phi(S^{t,x}_u)}{\abs{S^{t,x}_u}} + \abs{S^{t,x}_u} \grad\phi(S^{t,x}_u) } \le 1 + c_1 d \,, \qquad \P\text{-a.s.,}
	\end{equation*}
	Applying It\^{o}'s formula to $\abs{\p_{x_j} S^{t,x}_u}^p$ with $2 \le p \le 10$, we see that 
	\begin{equation*}
		\frac{d}{du} \E \abs{\p_{x_j} S^{t,x}_u}^p \le -(p\lambda - \frac{p(p-1)}{2} \sigma^2 (1+c_1 d)) \E \abs{\p_{x_j} S^{t,x}_u}^p \,.
	\end{equation*}
	When $\lambda$ is large enough, $2\lambda - (p-1) \sigma^2 (1+c_1 d) > \lambda$, which proves for $\abs{\beta} = 0$ and $p \ge 2$. 
	Also, Jensen's inequality gives the conclusion for $p=1$ as well. 

	On the other hand, for multi-index $\beta$, we have 
	\begin{equation*}
		d \p_x^\beta (\p_{x_j} S^{t,x}_u) = -\lambda \p_x^\beta (\p_{x_j} S^{t,x}_u) du + \sigma \p_x^\beta \left[ \p_{x_j} S^{t,x}_u \cdot \paren[\big]{ \frac{S^{t,x}_u \phi(S^{t,x}_u)}{\abs{S^{t,x}_u}} + \abs{S^{t,x}_u} \grad\phi(S^{t,x}_u) } \right] dW_u  \,.
	\end{equation*}
	Similarly, with the $W^{6,\infty}$-boundedness of $\phi$, we have 
	\begin{equation*}
		\frac{d}{du} \E \abs{\p_x^\beta (\p_{x_j} S^{t,x}_u)}^2 \le -(2\lambda - \sigma^2 \kappa_{2,\beta}) \E \abs{\p_x^\beta (\p_{x_j} S^{t,x}_u)}^2
	\end{equation*}
	for some $\kappa_{2,\beta}$ depending only on $\phi$ and $\beta$. 
	We choose $\lambda > \sigma^2 \kappa_{2,\beta}$ and apply Jensen's inequality.
	This proves for any $\beta$ and $p=0$. 

	Finally, Cauchy-Schwarz inequality leads to the most general form. 
\end{proof}

\begin{lemma}
	\label{lm:prop-y}
	For $0 \le t < T$, $x \in \R^d$, $\mu \in \c_\cbo$, we define the process $(Y^{t,x,\mu}_u)_{u \in [t,T]}$ by 
	\begin{equation*}
		dY^{t,x,\mu}_u = -\lambda (Y^{t,x,\mu}_u - M(\mu_u)) du + \sigma \abs{Y^{t,x,\mu}_u - M(\mu_u)} dW_u \,, \quad Y^{t,x,\mu}_t = x \,.
	\end{equation*}
	Then, for any $i,k \in [d]$, $u \in [t,T]$, we have 
	\begin{equation*}
		\E [\p_{x_i} Y^{t,x,\mu}_{u,k}] \le e^{-\lambda (u-t)} \one{i=k} \,,
	\end{equation*}
	and 
	\begin{equation*}
		\E [\p_{x_i}^2 Y^{t,x,\mu}_{u,k}] = 0 \,.
	\end{equation*}
	Moreover, when $\lambda$ is large enough, there exist some $C_p, \kappa_p > 0$ (depending on $d$ and $\phi$), for $p=2,4$, such that
	\begin{equation*}
		\E [\abs{\p_{x_i} Y^{t,x,\mu}_{u}}^p] \le C_p e^{-(p\lambda - \sigma^2 \kappa_p)(u-t)} 
	\end{equation*}
	for all $i,k \in [d]$, $u \in [t,T]$.
\end{lemma}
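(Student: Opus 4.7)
The plan is to exploit the fact that $M(\mu_u)$ is deterministic (given the fixed flow $\mu \in \c_\cbo$) and that the drift of $Y^{t,x,\mu}$ is linear in $Y$, so the first two claims follow from a direct computation of expectations of derivative processes. Differentiating the SDE formally in $x_i$ and observing that $\p_{x_i} M(\mu_u) = 0$ gives the linear SDE
\begin{equation*}
d(\p_{x_i} Y^{t,x,\mu}_{u,k}) = -\lambda \, \p_{x_i} Y^{t,x,\mu}_{u,k} \, du + \sigma \, \p_{x_i} \abs{Y^{t,x,\mu}_u - M(\mu_u)} \, dW_{u,k} \,,
\end{equation*}
with initial value $\p_{x_i} Y^{t,x,\mu}_{t,k} = \one{i=k}$. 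Taking expectations, and noting that the stochastic integral is a true martingale once the $p=2$ moment bound below is in hand, I obtain $\tfrac{d}{du} \E[\p_{x_i} Y^{t,x,\mu}_{u,k}] = -\lambda \, \E[\p_{x_i} Y^{t,x,\mu}_{u,k}]$, which integrates to $\E[\p_{x_i} Y^{t,x,\mu}_{u,k}] = \one{i=k}\, e^{-\lambda(u-t)}$, which is precisely the claimed first bound. The same argument applied to the second-order derivative SDE, together with the vanishing initial condition $\p_{x_i}^2 Y^{t,x,\mu}_{t,k} = 0$, delivers $\E[\p_{x_i}^2 Y^{t,x,\mu}_{u,k}] = 0$.

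For the $p$-th moment bounds I would proceed as in the proof of Lemma~\ref{lm:prop-gbm}. The key pointwise estimate is
\begin{equation*}
\abs[\big]{\p_{x_i} \abs{Y^{t,x,\mu}_u - M(\mu_u)}} = \abs[\Big]{\tfrac{(Y^{t,x,\mu}_u - M(\mu_u))^\top}{\abs{Y^{t,x,\mu}_u - M(\mu_u)}} \p_{x_i} Y^{t,x,\mu}_u} \le \abs{\p_{x_i} Y^{t,x,\mu}_u} \,,
\end{equation*}
valid off the exceptional set $\{Y^{t,x,\mu}_u = M(\mu_u)\}$. Summing the It\^o decompositions of $(\p_{x_i} Y^{t,x,\mu}_{u,k})^2$ across $k=1,\dots,d$ and using independence of the component Brownian motions $W_{\cdot,k}$ yields
\begin{equation*}
\tfrac{d}{du}\E\abs{\p_{x_i} Y^{t,x,\mu}_u}^2 \le (-2\lambda + d\sigma^2)\, \E\abs{\p_{x_i} Y^{t,x,\mu}_u}^2 \,,
\end{equation*}
so Grönwall's inequality yields the $p=2$ estimate with $\kappa_2 = d$. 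For $p=4$ I would apply It\^o to $(\abs{\p_{x_i} Y^{t,x,\mu}_u}^2)^2$, and combine the pointwise bound with Cauchy--Schwarz to derive $\tfrac{d}{du}\E\abs{\p_{x_i} Y^{t,x,\mu}_u}^4 \le (-4\lambda + \sigma^2 \kappa_4)\, \E\abs{\p_{x_i} Y^{t,x,\mu}_u}^4$ for an explicit $\kappa_4$ depending only on $d$. Taking $\lambda$ large enough so that both $2\lambda > d\sigma^2$ and $4\lambda > \sigma^2 \kappa_4$ closes the argument with $C_p = 1$.

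The only genuinely delicate step is the pathwise differentiation of an SDE whose diffusion coefficient $\sigma\abs{Y - M(\mu_u)}$ is only Lipschitz, not $C^1$, at the set $\{Y = M(\mu_u)\}$. I would handle this by a standard mollification, replacing $\abs{\cdot}$ by $\sqrt{\abs{\cdot}^2 + \epsilon}$, deriving all of the above estimates for the regularized flow (where everything is literal), and then passing to the limit $\epsilon \downarrow 0$ using the uniform moment bounds together with stability of solutions of SDEs with Lipschitz coefficients. The occupation-time measure of $\{Y_u = M(\mu_u)\}$ is zero thanks to the non-degeneracy of $Y - M(\mu_u)$ away from that set, so the formal derivative computation is consistent in the limit. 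Should the statement be read with the cutoff $\phi(Y)$ in the volatility (as in the process $Y$ used inside the proof of Lemma~\ref{lm:bound-q}), the same scheme applies, with $\kappa_p$ picking up the $W^{n,\infty}$-bounds on $\phi$ exactly as in Lemma~\ref{lm:prop-gbm}.
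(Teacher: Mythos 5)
Your proof takes essentially the same approach as the paper: differentiate the SDE pathwise, take expectations to obtain a linear ODE for the mean (the martingale part vanishing), and then apply It\^{o}'s formula plus Gr\"{o}nwall for the moment bounds. You are also right to flag the $\phi$ discrepancy --- the statement of the lemma omits the cutoff in the volatility, while the paper's own proof silently reinstates it (writing $\abs{Y_u-M(\mu_u)}\phi(Y_u)$ in the differentiated SDE) --- and your mollification remark addresses the non-differentiability of $\abs{\cdot}$ at $\{Y_u = M(\mu_u)\}$ more carefully than the paper, which glosses over that point.
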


\begin{proof}
	Fix $t,T,x,\mu$, and for simplicity we write $Y = Y^{t,x,\mu}$.

	Observe that $M(\mu_u)$ is independent of $x$, so we apply the techniques in the proof of Lemma~\ref{lm:prop-gbm} to see that 
	\begin{equation*}
		d \p_{x_i} Y_u = -\lambda \p_{x_i} Y_u du + \sigma \p_{x_i} Y_u \cdot \paren[\big]{ \frac{(Y_u - M(\mu_u)) \phi(Y_u)}{\abs{Y_u - M(\mu_u)}} + \abs{Y_u - M(\mu_u)} \grad\phi(Y_u) } dW_u  \,.
	\end{equation*}
	Then 
	\begin{equation*}
		\frac{d}{du} \E [\p_{x_i} Y_u] = -\lambda \E[\p_{x_i} Y_u] \,.
	\end{equation*}
	Note that $\E[Y_t] = x$, and $\p_{x_i} x = \one{i=k}$. 
	So 
	\begin{equation*}
		\E [\p_{x_i} Y_{u,k}] = e^{-\lambda (u-t)} \one{i=k} \,.
	\end{equation*}
	Similarly, taking a further derivative gives also 
	\begin{equation*}
		\frac{d}{du} \E [\p_{x_i}^2 Y_u] = -\lambda \E[\p_{x_i}^2 Y_u] \,,
	\end{equation*}
	while $\p_{x_i}^2 x = 0$. 
	Thus 
	\begin{equation*}
		\E [\p_{x_i}^2 Y_{u,k}] = 0 \,.
	\end{equation*}

	Recall that  
	\begin{align*}
		& d \p_{x_i} Y_{u,k} = -\lambda \p_{x_i} Y_{u,k} du \\
		& \qquad + \sigma \left[ \p_{x_i} Y_u \cdot \paren[\big]{ \frac{(Y_u - M(\mu_u)) \phi(Y_u)}{\abs{Y_u - M(\mu_u)}} + \abs{Y_u - M(\mu_u)} \grad\phi(Y_u) } \right] dW_{u,k}  \,.
	\end{align*}
	For $p \ge \{2,4\}$, we apply It\^{o}'s formula to see that 
	\begin{align*}
		& d (\p_{x_i} Y_{u,k})^p = -p\lambda (\p_{x_i} Y_{u,k})^p du \\
		& \qquad + \frac{p(p-1) \sigma^2}{2} (\p_{x_i} Y_{u,k})^{p-2} \left[ \p_{x_i} Y_u \cdot \paren[\big]{ \frac{(Y_u - M(\mu_u)) \phi(Y_u)}{\abs{Y_u - M(\mu_u)}} + \abs{Y_u - M(\mu_u)} \grad\phi(Y_u) } \right] du \\
		& \qquad + p\sigma (\p_{x_i} Y_{u,k})^{p-1} \left[ \p_{x_i} Y_u \cdot \paren[\big]{ \frac{(Y_u - M(\mu_u)) \phi(Y_u)}{\abs{Y_u - M(\mu_u)}} + \abs{Y_u - M(\mu_u)} \grad\phi(Y_u) } \right] dW_{u,k} \,.
	\end{align*}
	Notice that 
	\begin{equation*}
		{ \frac{(Y_u - M(\mu_u)) \phi(Y_u)}{\abs{Y_u - M(\mu_u)}} + \abs{Y_u - M(\mu_u)} \grad\phi(Y_u) }
	\end{equation*}
	is bounded almost surely, so there exists some constant $c_p > 0$, depending solely on $\phi$, such that 
	\begin{equation}
		\label{e:ineq-cp}
		\frac{d}{du} \E [(\p_{x_i} Y_{u,k})^p] \le -p\lambda \E[(\p_{x_i} Y_{u,k})^p] + \frac{p(p-1)\sigma^2 c_p}{2} \E [(\p_{x_i} Y_{u,k})^{p-2} \abs{\p_{x_i} Y_u}^2] \,.
	\end{equation}
	In particular, 
	\begin{equation*}
		\frac{d}{du} \E [(\p_{x_i} Y_{u,k})^2] \le -2\lambda \E[(\p_{x_i} Y_{u,k})^2] + \sigma^2 c_p\E \abs{\p_{x_i} Y_u}^2 \,,
	\end{equation*}
	so that 
	\begin{equation*}
		\frac{d}{du} \E \abs{\p_{x_i} Y_u}^2 \le -(2\lambda - \sigma^2 c_2) \E \abs{\p_{x_i} Y_u}^2 \,.
	\end{equation*}
	Note that $\E \abs{\p_{x_i} Y_t}^2 = 1$. 
    We may choose $C_2 = 1$, $\kappa_2 = c_2$ so that 
	\begin{equation*}
		\E (\p_{x_i} Y_{u,k})^2 \le \E \abs{\p_{x_i} Y_u}^2 \le C_2 e^{-(2\lambda - \sigma^2 \kappa_2) (u-t)} \,. 
	\end{equation*}

	Moreover, from~\eqref{e:ineq-cp} we have 
	\begin{equation*}
		\frac{d}{du} \E [(\p_{x_i} Y_{u,k})^4] \le -p\lambda \E[(\p_{x_i} Y_{u,k})^4] + 6 \sigma^2 c_4 \E [(\p_{x_i} Y_{u,k})^2 \abs{\p_{x_i} Y_u}^2] \,,
	\end{equation*}
	so 
	\begin{align*}
		\frac{d}{du} \E [\abs{\p_{x_i} Y_{u,k}}_4^4] & \le -p\lambda \E[\abs{\p_{x_i} Y_{u,k}}_4^4] + 6 \sigma^2 c_4 \E [\abs{\p_{x_i} Y_u}_2^4] \\
		& \le  -p\lambda \E[\abs{\p_{x_i} Y_{u,k}}_4^4] + 6 \sigma^2 c_4 d \E [\abs{\p_{x_i} Y_u}_4^4] \,.
	\end{align*}
	This gives 
	\begin{equation*}
		\E [\abs{\p_{x_i} Y_{u,k}}_4^4] \le e^{-(4\lambda - 6\sigma^2 c_4 d) (u-t)} \,.
	\end{equation*}
	Taking $C_4 = d$ and $\kappa_4 = 6dc_4$ proofs the claim for $p=4$.
\end{proof}

\end{document}